\newtheorem{theorem}{Theorem}
\newtheorem{lemma}[theorem]{Lemma}
\newtheorem{proposition}[theorem]{Proposition}
\newtheorem{corollary}[theorem]{Corollary}
\theoremstyle{definition}
\newtheorem{definition}[theorem]{Definition}
\theoremstyle{remark}
\numberwithin{equation}{section}
\newcommand{\RNum}[1]{\uppercase\expandafter{\romannumeral #1\relax}}
\begin{document}

\title{Distinguishing 4-dimensional geometries via  profinite completions}

\author{Jiming Ma}
\address{School of Mathematical Sciences, Fudan University, Shanghai, 200433, P. R. China}
\email{majiming@fudan.edu.cn}

\author{Zixi Wang}
\address{School of Mathematical Sciences, Fudan University, Shanghai, 200433, P. R. China}

\email{zxwang18@fudan.edu.cn}

\keywords{4-dimensional Thurston geometries, profinite completion.}

\subjclass[2010]{20E18, 57N16, 57M05.}

\date{Jan. 4, 2021}

\thanks{Jiming Ma was supported by NSFC No.11771088.}

\begin{abstract}

It is well-known that there are 19 classes of geometries for 4-dimensional manifolds in the sense of Thurston. We could ask that to what extent the geometric information is revealed by the profinite completion of the fundamental group  of a closed  smooth geometric 4-manifold. In this paper, we show that the geometry of a closed orientable 4-manifold in the sense of Thurston could be detected by the profinite completion of its fundamental group except when the geometry is $ \mathbb{H}^{4}$,  $\mathbb{H}^{2}_{\mathbb{C}}$ or $\mathbb{H}^2 \times \mathbb{H}^2$.

Moreover, despite the fact that not every smooth 4-manifold could admit one geometry in the sense of Thurston, some 4-dimensional manifolds with Seifert fibred structures are indeed geometric. For a closed orientable Seifert fibred 4-manifold $M$, we show that whether $M$ is geometric could be detected by the profinite completion of its fundamental group.
\end{abstract}

\maketitle

\section{introduction}
An \textit{n-dimensional geometry} $\mathbb{X}$ in the sense of Thurston means a pair $\left(X,G\right)$, where $X$ is a 1-connected $n$-dimensional smooth Riemann manifold with a Lie group action of $G$ \cite{Fi:1984}. The $G$-action is required to be transitive, effective and isometrical. We could assume $G$ is maximal among all such Lie groups acting on $X$, which could be seen as the isometry group of $X$ for certain Riemann metric. We call a manifold $M$ \textit{admitting geometry $\mathbb{X}$} if it is a quotient $X\slash \Gamma$, where $\Gamma$ is a discrete subgroup of $G$ acting freely on $X$ such that $X\slash \Gamma$ has finite volume. It is well-known that the 3-dimensional geometries are classified by Thurston. There are eight 3-dimensional geometries and six of them can be realized as Seifert fibred spaces. The details are referred to Scott  \cite{Scott:1983}. Filipkiewicz \cite{Fi:1984} classified the 4-dimensional geometries in the sense of Thurston, and there are nineteen classes of maximal geometries with one class including infinitely many non-equivalent geometries. Any closed orientable 4-manifold with eight   among nineteen geometries of Filipkiewicz could be realized as a  4-dimensional Seifert fibred manifold except for two flat 4-manifolds \cite{Ue:1990,Hi:2002}.

It has been long focused that distinguishing finitely generated residually finite group $G$ by the collection of its finite quotients, or equivalently, by its \textit{profinite completions} $\widehat{G}$. Let $\mathscr{C}$ denote a certain collection of residually finite groups. We say a group $G\in\mathscr{C}$ is \textit{profinitely rigid in $\mathscr{C}$} if $G$ could be distinguished  by $\widehat{G}$ in $\mathscr{C}$ up to isomorphism. Otherwise $G$ has \textit{profinite flexibility} when the rigidity fails. If $G$ is distinguished by $\widehat{G}$ in the collection of all finitely generated residually finite groups,  we say that $G$ admits \textit{absolutely profinite rigidity}. Considering about free group $F_n$ of rank $n\ge2$, it's still open that whether $F_n$ is profinitely rigid in the absolute sense \cite{Noskov:1979,Reid:2018}.

In 2018, Reid gave an ICM talk of major problems and recent work about profinite rigidity related to low-dimensional topology \cite{Reid:2018}.  Bridson, Conder and Reid showed that Fuchsian groups are profinitely rigid among all the lattices of connected Lie groups \cite{BCR:2015}.  For a 3-dimensional geometric manifold in the sense of Thurston, its geometry could be distinguished by the profinite completion of its fundamental group proved by Wilton and Zalesskii \cite{Wil:2017}. Bridson, McReynolds, Reid and Spitler showed that there exist arithmetic lattices in $PSL(2,\mathbb{C})$ which are profinitely rigid in the absolute sense \cite{BMRS:2018}. Jaikin-Zapirain proved that being fibred is a profinite invariant for compact orientable aspherical 3-manifolds \cite{J:2020}. Bridson and Reid \cite{BrRe:2015}, Boileau and Friedl \cite{BoFr:2015}  independently proved that the figure-eight knot group could be distinguished by its fundamental group among all the knot groups. However there are examples of 3-manifolds which are not profinitely rigid. Funar \cite{Funar} gave infinite pairs of non-homeomorphic $\mathbb{S}ol^3$ manifolds with isomorphic profinite completions  based on the work of Stebe \cite{Stabe:1972}. Hempel  \cite{Hem:2014} showed that there are non-homeomorphic closed Seifert fibred spaces of geometry $\mathbb{H}^2\times\mathbb{E}$ with isomorphic profinite completions. Wilkes \cite{Wilk:2017} proved that the profinite rigidity holds in the 3-dimensional closed orientable Seifert fibred manifolds except for the case exhibited by Hempel.

Considering the 4-dimensional Thurston's geometries, Stover \cite{Sto:2019} showed the profinite flexibility of $\mathbb{H}_{\mathbb{C}}^2$-manifolds. Piwek, Popovi\'{c} and Wilkes proved that two 4-dimensional $\mathbb{E}^4$-orbifolds are isomorphic if the profinite completions of their fundamental groups are isomorphic, since their fundamental groups could be seen as 4-dimensional crystallographic groups \cite{PPW:2019}.

We focus on the problem that for a closed orientable 4-dimensional manifold, whether its geometry could be distinguished by the profinite completion of its fundamental group. Inspired by Theorem B of \cite{Wil:2017} of Wilton and Zalesskii, we conclude following theorem in this paper.

\begin{theorem}\label{thm:1}
	Let $M$, $N$ be two closed orientable 4-dimensional manifolds with infinite fundamental groups and distinct geometries $\mathbb{X}_1$ and $\mathbb{X}_2$ respectively in the sense of Thurston. Then $\widehat{\pi_1(M)}\ncong\widehat{\pi_1(N)}$ if $\{\mathbb{X}_1,\ \mathbb{X}_2\}\nsubseteq\{\mathbb{H}_{\mathbb{C}}^2,\ \mathbb{H}^4, \ \mathbb{H}^2 \times \mathbb{H}^2\}$.
\end{theorem}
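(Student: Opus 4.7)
The plan is to exhibit, for each pair of distinct geometries allowed by the hypothesis, a profinite invariant of $\pi_1$ that differs on the two sides, so that $\widehat{\pi_1(M)}$ and $\widehat{\pi_1(N)}$ cannot be isomorphic. First, the three compact geometries $\mathbb{S}^4$, $\mathbb{CP}^2$ and $\mathbb{S}^2\times\mathbb{S}^2$ give only finite fundamental groups and are ruled out by the standing hypothesis. The statement further forbids comparing two members of $\{\mathbb{H}^4,\mathbb{H}_{\mathbb{C}}^2,\mathbb{H}^2\times\mathbb{H}^2\}$ with one another, but each of these must still be separated from the remaining thirteen infinite geometries on Filipkiewicz's list.

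I plan to assemble the following profinite invariants of a finitely generated residually finite group $G$: (i) virtual nilpotency, together with the nilpotency class of a finite-index nilpotent subgroup; (ii) virtual polycyclicity and, in that case, the Hirsch length $h(G)$, recoverable from pro-$p$ completions; (iii) the first Betti number $b_1(G) = \operatorname{rank}_{\mathbb{Z}}\widehat{G}^{\mathrm{ab}}$; (iv) the rational Euler characteristic $\chi(G)$, which under Serre-goodness is a profinite invariant and agrees with $\chi(M)$ when $M$ is aspherical; (v) the rank of the centre of finite-index subgroups of $G$ and whether this centre splits virtually as a direct factor; and (vi) whether $\widehat{G}$ contains a normal closed subgroup isomorphic to the profinite completion of the fundamental group of a closed orientable hyperbolic $3$-manifold, detectable via Theorem~B of \cite{Wil:2017}.

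I would then argue case by case, organised by virtual algebraic structure. Among the geometries with $\chi(G) > 0$ (namely $\mathbb{H}^4$, $\mathbb{H}_{\mathbb{C}}^2$ and $\mathbb{H}^2\times\mathbb{H}^2$), the theorem only asks to separate them from non-hyperbolic types; the sign of $\chi(G)$ accomplishes this, since the only non-hyperbolic geometry with $\chi(G)\neq 0$ is $\mathbb{S}^2\times\mathbb{H}^2$, whose $\chi(G)$ is negative. The virtually abelian geometries $\mathbb{E}^4$ and $\mathbb{S}^2\times\mathbb{E}^2$ are separated from each other by Hirsch length ($4$ against $2$) and from all other types by virtual abelianity. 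The virtually nilpotent non-abelian pair $\mathrm{Nil}^4$ and $\mathrm{Nil}^3\times\mathbb{E}^1$ are told apart by nilpotency class (three versus two) and by $b_1$. The virtually polycyclic but non-nilpotent family $\mathrm{Sol}^4_0$, $\mathrm{Sol}^4_1$ and $\mathrm{Sol}^4_{m,n}$ is picked out by virtual polycyclicity plus failure of virtual nilpotency, and its members are separated among themselves by $b_1$ together with finer polycyclic data of the monodromy accessible from $\widehat{G}$. The remaining non-polycyclic geometries with $\chi(G)=0$, namely $\mathbb{F}^4$, $\mathbb{H}^2\times\mathbb{E}^2$, $\widetilde{\mathrm{SL}_2}\times\mathbb{E}^1$ and $\mathbb{H}^3\times\mathbb{E}^1$, are distinguished by the rank of the virtual centre (zero, two, two, one respectively), and the remaining tie between $\mathbb{H}^2\times\mathbb{E}^2$ and $\widetilde{\mathrm{SL}_2}\times\mathbb{E}^1$ is broken by whether this centre splits virtually as a direct factor.

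The principal obstacle lies in two places. First, among the solvable non-nilpotent geometries, separating different $\mathrm{Sol}^4_{m,n}$ from one another and from $\mathrm{Sol}^4_0$, $\mathrm{Sol}^4_1$ requires extracting enough polycyclic structure from $\widehat{G}$ to pin down characteristic invariants of the abelianised monodromy, in a spirit parallel to Funar's work in dimension three. Second, isolating $\mathbb{H}^3\times\mathbb{E}^1$ among the $\chi=0$ non-polycyclic geometries with rank-one centre requires identifying the quotient of $\widehat{G}$ by this centre as the profinite completion of a closed hyperbolic $3$-manifold group; this is the content of Theorem~B of Wilton--Zalesskii in \cite{Wil:2017} and is what powers the final step.
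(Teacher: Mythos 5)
Your overall strategy (a case analysis by geometry using profinite invariants: virtual nilpotency class, solvability data, $b_1$, Euler characteristic, centres, Wilton--Zalesskii for the $\mathbb{H}^3$ factor, and virtual splitting of the central $\mathbb{Z}^2$ to separate $\mathbb{H}^2\times\mathbb{E}^2$ from $\widetilde{\mathbb{S}L_2}\times\mathbb{E}$) is close in spirit to the paper's, but two of your steps have genuine gaps. First, you separate $\mathbb{H}^4$, $\mathbb{H}^2_{\mathbb{C}}$ (and $\mathbb{H}^2\times\mathbb{H}^2$) from the remaining geometries solely by the sign of $\chi$, invoking Serre-goodness to make $\chi$ a profinite invariant. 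Goodness is not known for cocompact lattices in $PO(4,1)$ or $PU(2,1)$, so this step rests on an unproven hypothesis; this is precisely why the paper argues differently there: for $\mathbb{H}^4$ and $\mathbb{H}^2_{\mathbb{C}}$ versus $\mathbb{H}^3\times\mathbb{E}$, $\mathbb{H}^2\times\mathbb{E}^2$, $\widetilde{\mathbb{S}L_2}\times\mathbb{E}$ it uses triviality of the (virtual) centre to show $\pi_1(M')$ injects, via the projection $\widehat{\pi_1(S)}\times\widehat{\mathbb{Z}}\to\widehat{\pi_1(S)}$, into $\widehat{\pi_1(S)}$, contradicting Theorem A of Wilton--Zalesskii ($\widehat{\mathbb{Z}}^2$ cannot embed in the profinite completion of a closed hyperbolic $3$-manifold group); for $\mathbb{H}^2\times\mathbb{S}^2$ it uses Bridson--Conder--Reid profinite rigidity of Fuchsian groups; against the solvable geometries it uses Gromov's dichotomy (these lattices contain $F_2$, hence are not virtually solvable); and for irreducible $\mathbb{H}^2\times\mathbb{H}^2$ it uses Margulis' normal subgroup theorem to force $b_1=0$ in every finite-index subgroup, together with profinite invariance of $b_1$, and $L^2$-Betti numbers in the reducible case.

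Second, your treatment of the solvable non-nilpotent family is where the real work lies and your proposed tools do not suffice: $b_1=1$ for all closed orientable $\mathbb{S}ol^4_0$-, $\mathbb{S}ol^4_{m,n}$- and $\mathbb{S}ol^4_1$-manifolds, so $b_1$ distinguishes nothing here, and ``finer polycyclic data of the monodromy'' is exactly what the profinite completion does \emph{not} determine (Funar's examples show flexibility, i.e.\ the conjugacy class of the monodromy is lost). The paper's key lemma is that an isomorphism $\widehat{\mathbb{Z}^3\rtimes_A\mathbb{Z}}\cong\widehat{\mathbb{Z}^3\rtimes_B\mathbb{Z}}$ must carry the (closure of the) nilradical to the nilradical and forces $A$ to be conjugate to $B$ or $B^{-1}$ in $GL(3,\widehat{\mathbb{Z}})$ (via Grunewald--Zalesskii); conjugacy modulo every $q$ then pins down the characteristic polynomial, and the characteristic polynomial alone determines whether the geometry is $\mathbb{S}ol^4_0$ or which $\mathbb{S}ol^4_{m,n}$ occurs. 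Separating $\mathbb{S}ol^4_1$ additionally needs the argument that $\Gamma_q\rtimes\mathbb{Z}$ has derived length $3$ (not metabelian), which is then a profinite invariant. Without these two ingredients your case analysis does not close; the rest of your outline (nilpotent classes, virtual centre rank, virtual splitting of the fibre subgroup, and the use of Wilton--Zalesskii to isolate $\mathbb{H}^3\times\mathbb{E}$) matches the paper's route, though the profinite detectability of the virtual centre rank and of the virtual splitting would still need the cohomological/centraliser arguments the paper supplies.
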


For a closed orientable 4-dimensional Seifert fibred manifold, it may not admit any geometric structure if the monodromy doesn't meet certain conditions. However, whether it is geometric could be detected by the profinite completion of its fundamental group.

\begin{theorem}\label{thm:geo}
	Let $M$, $N$ be two closed orientable 4-dimensional Seifert fibred manifolds over hyperbolic 2-orbifolds, and suppose they have isomorphic profinite completions of their fundamental groups $\widehat{\pi_1(M)}\cong\widehat{\pi_1(N)}$. Then $M$ is geometric if and only if $N$ is geometric.
\end{theorem}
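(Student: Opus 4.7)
The plan is to reduce the geometricity question to an algebraic property of the Seifert monodromy, and then verify that this property is preserved by a profinite isomorphism. By the classification of Ue \cite{Ue:1990} and Hillman \cite{Hi:2002}, a closed orientable 4-dimensional Seifert fibred manifold $M$ over a hyperbolic 2-orbifold $B$ admits a Thurston geometry if and only if the monodromy
\[
\rho_M\colon \pi_1^{orb}(B)\longrightarrow GL(2,\mathbb{Z})
\]
arising from the fibre extension
\[
1\longrightarrow \Lambda_M\longrightarrow \pi_1(M)\longrightarrow \pi_1^{orb}(B)\longrightarrow 1,\qquad \Lambda_M\cong\mathbb{Z}^2,
\]
has finite image. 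So the task becomes to show that finiteness of $\rho_M(\pi_1^{orb}(B))$ is a profinite invariant of $\pi_1(M)$.

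First, I would identify the fibre subgroup $\Lambda_M$ as a canonical subgroup of $\pi_1(M)$, namely as the maximal normal abelian subgroup (which is well-defined because a non-elementary Fuchsian group has no non-trivial normal abelian subgroup, so any such subgroup of $\pi_1(M)$ must lie inside $\Lambda_M$). I would then argue that the corresponding closure $\overline{\Lambda_M}\cong \widehat{\mathbb{Z}}^2$ is a canonical closed normal subgroup of $\widehat{\pi_1(M)}$, with quotient naturally identified with $\widehat{\pi_1^{orb}(B)}$. This uses goodness of the ambient group — Fuchsian and $\mathbb{Z}^2$-by-Fuchsian groups are good — which guarantees that profinite completion of the fibre extension remains short exact.

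Second, I would recover the monodromy from the profinite data. The conjugation action of $\widehat{\pi_1(M)}$ on $\overline{\Lambda_M}$ descends to a continuous homomorphism $\widehat{\rho_M}\colon \widehat{\pi_1^{orb}(B)}\to GL(2,\widehat{\mathbb{Z}})$. Since $GL(2,\mathbb{Z})$ is residually finite it embeds into $GL(2,\widehat{\mathbb{Z}})$, and since $\pi_1^{orb}(B)$ is residually finite it embeds into its profinite completion; so $\widehat{\rho_M}$ restricted to $\pi_1^{orb}(B)$ reproduces $\rho_M$, and $\rho_M$ has finite image if and only if $\widehat{\rho_M}$ does. Given an isomorphism $\widehat{\pi_1(M)}\cong\widehat{\pi_1(N)}$ that respects the canonical fibre subgroup, the two profinite monodromies are matched, and then the Ue--Hillman criterion finishes the argument. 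The profinite rigidity of Fuchsian groups due to Bridson--Conder--Reid \cite{BCR:2015} also ensures that the bases $B_M$ and $B_N$, which need not be homeomorphic, give rise to base fundamental groups that are genuinely isomorphic, so the monodromies can be compared.

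The main obstacle I anticipate is the first step: showing that $\Lambda_M$ is truly canonical inside $\widehat{\pi_1(M)}$, i.e.\ that any isomorphism $\widehat{\pi_1(M)}\cong\widehat{\pi_1(N)}$ must carry $\overline{\Lambda_M}$ onto $\overline{\Lambda_N}$ and cannot confuse it with some other closed normal subgroup. This amounts to an analysis of normal subgroups of profinite Fuchsian-by-$\widehat{\mathbb{Z}}^2$ groups, for which one needs good control of centralisers in $\widehat{\pi_1^{orb}(B)}$. I expect this to follow by combining the detailed structural results of \cite{BCR:2015} with the techniques Wilkes developed for the analogous 3-dimensional Seifert case \cite{Wilk:2017}; once this canonicity is established, the remaining steps reduce to straightforward bookkeeping with the profinite monodromy.
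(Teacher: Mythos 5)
Your overall strategy coincides with the paper's: reduce geometricity to the Ue--Hillman criterion (finite image of the monodromy $\rho_M$, equivalently $[\pi_1(M):C_{\pi_1(M)}(\sqrt{\pi_1(M)})]<\infty$, via Corollary \ref{cor:hyperbolic} and Theorems \ref{thm:h2*e2}, \ref{thm:sl*e}), then show this finiteness is visible in the profinite completion. Your argument that $\rho_M$ has finite image if and only if the continuous extension $\widehat{\rho_M}$ does (density of $\pi_1^{orb}(B)$ plus continuity, equivalently finiteness of the index of the centraliser of $\overline{\Lambda_M}$ in $\widehat{\pi_1(M)}$) is sound and is exactly the content of the paper's Claim 2; the exactness of the completed fibre sequence via goodness is also in line with the paper.

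The genuine gap is the step you yourself flag and then defer: you never prove that an isomorphism $\widehat{\pi_1(M)}\cong\widehat{\pi_1(N)}$ must carry $\overline{\Lambda_M}$ onto $\overline{\Lambda_N}$, writing only that you ``expect'' it to follow from \cite{BCR:2015} and the techniques of \cite{Wilk:2017}. Without this canonicity the two profinite monodromies cannot be compared at all, so the proof is incomplete precisely at the point that carries the weight of the theorem. The paper closes this point (its Claim 1) with a short argument rather than a heavy structural analysis of normal subgroups: one containment comes from the fact that $\sqrt{\pi_1(N)}$ is the maximal normal free abelian subgroup of $\pi_1(N)$, so the image of $\overline{\sqrt{\pi_1(M)}}$ must lie in $\overline{\sqrt{\pi_1(N)}}$; equality then follows because otherwise the quotient $\overline{\sqrt{\pi_1(N)}}\slash\widehat{\phi}(\overline{\sqrt{\pi_1(M)}})$ would be a non-trivial finite normal abelian subgroup of $\widehat{\pi_1^{orb}(B_2)}$, which is excluded by Corollary 5.2 of \cite{BCR:2015}. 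Supplying an argument of this kind (maximality plus absence of finite normal abelian subgroups in profinite completions of hyperbolic 2-orbifold groups) is what your proposal is missing; note also that your appeal to profinite rigidity of Fuchsian groups to identify the two base groups is not needed, since only the finiteness of the monodromy image, not the monodromy itself, has to be transferred.
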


By the proof of Theorem \ref{thm:1} and Theorem \ref{thm:geo}, we have the following corollary to describe the geometry of a 4-dimensional geometric manifold by the profinite completion of its fundamental group.

\begin{corollary}\label{Cor:3}
	Let $M$ be a closed, orientable 4-manifold with geometry $\mathbb{X}$ such that $\mathbb{X}\notin \{\mathbb{H}^4_{\mathbb{R}},\ \mathbb{H}^2_{\mathbb{C}},\ \mathbb{H}^2\times\mathbb{H}^2 \}$, and $\pi_1(M)$ is infinite, then:
	
	(1) $\mathbb{X}$ is $\mathbb{S}^3\times\mathbb{E}$ or $\mathbb{S}^2\times\mathbb{E}^2$ if and only if $\widehat{\pi_1(M)}$ is virtually $\widehat{\mathbb{Z}}$ or  $\widehat{\mathbb{Z}}^2$ respectively;
	
	(2) $\mathbb{X}$ is $\mathbb{E}^4$, $\mathbb{N}il^3\times\mathbb{E}$, $\mathbb{N}il^4$, $\mathbb{S}ol^4_1$ or one of geometries $\{\mathbb{S}ol^4_0,\ \mathbb{S}ol^4_{m,n}\}$ if and only if $\widehat{\pi_1(M)}$ is virtually $\widehat{\mathbb{Z}^4}$, virtually nilpotent class $2$, virtually nilpotent class $3$, virtually solvable length $3$ or virtually solvable length $2$;
	
	(3) $M$ is a Seifert 4-manifold if and only if  $\widehat{\pi_1(M)}$ contains $\widehat{\mathbb{Z}}^2$ as a normal subgroup; 
	
	(4) $\mathbb{X}$ is $\mathbb{H}^3\times\mathbb{E}$ if and only if $\widehat{\pi_1(M)}$ contains $\widehat{\mathbb{Z}}^2$ as a subgroup but does not contain $\widehat{\mathbb{Z}}^3$ as a subgroup;
	
	(5) $\mathbb{X}$ is $\mathbb{H}^2\times\mathbb{S}^2$ if and only if $\widehat{\pi_1(M)}$ is virtually isomorphic to the profinite completion of some surface group.

\end{corollary}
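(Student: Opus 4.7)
The plan is to treat Corollary \ref{Cor:3} as a repackaging of Theorems \ref{thm:1} and \ref{thm:geo} in terms of intrinsic invariants of $\widehat{\pi_1(M)}$, using the standard dictionary between 4-dimensional Thurston geometries and their fundamental groups (Filipkiewicz \cite{Fi:1984}, Hillman \cite{Hi:2002}). The first step is to record, for each of the nineteen geometries $\mathbb{X}$, a characteristic algebraic feature of $\pi_1(M)$ for $M$ a closed orientable $\mathbb{X}$-manifold with infinite fundamental group: virtually $\mathbb{Z}^k$ for the spherical--euclidean and flat geometries (giving (1) and the first case of (2)); virtually nilpotent of class $2$ or $3$ for $\mathbb{N}il^3 \times \mathbb{E}$ and $\mathbb{N}il^4$; virtually polycyclic of derived length $2$ or $3$ for the $\mathbb{S}ol^4$-families; a normal $\mathbb{Z}^2$-subgroup arising from a torus fibration for any Seifert 4-manifold geometry; a $\mathbb{Z}^2$-subgroup with no $\mathbb{Z}^3$ enlargement for $\mathbb{H}^3 \times \mathbb{E}$, since closed hyperbolic 3-manifold groups contain no $\mathbb{Z}^2$; and virtually a closed surface group for $\mathbb{H}^2 \times \mathbb{S}^2$.

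Given this table, each forward implication in (1)--(5) follows from routine facts about profinite completions of finitely generated residually finite groups: passage to $\widehat{\cdot}$ commutes with taking finite-index subgroups, preserves nilpotency class and derived length of finitely generated polycyclic groups, sends $\mathbb{Z}^n$ to $\widehat{\mathbb{Z}}^n$ with normality preserved, and sends a virtual surface group to a group virtually isomorphic to the profinite completion of a surface group. For the backward implications I would argue by elimination using Theorem \ref{thm:1}: if $\widehat{\pi_1(M)}$ satisfies one of the listed properties and $M$ carries geometry $\mathbb{Y} \notin \{\mathbb{H}^4, \mathbb{H}^2_{\mathbb{C}}, \mathbb{H}^2 \times \mathbb{H}^2\}$, then Theorem \ref{thm:1} ensures that $\mathbb{Y}$ is determined by $\widehat{\pi_1(M)}$ among the remaining sixteen geometries, and it suffices to check pairwise that distinct geometries from this shorter list yield distinct profinite invariants in our list. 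The three excluded hyperbolic geometries are automatically ruled out from (1), (2), (5) by the presence of nonabelian free subgroups in their fundamental groups, and from (3), (4) because their abelian subgroup structure is incompatible with the stated profinite criteria. Part (3) additionally invokes Theorem \ref{thm:geo} to promote a normal $\widehat{\mathbb{Z}}^2$ in $\widehat{\pi_1(M)}$ to an actual Seifert fibration on $M$.

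The main obstacle is the lifting step underlying parts (3) and (4): deducing from the presence of a (normal) $\widehat{\mathbb{Z}}^k$-subgroup of $\widehat{\pi_1(M)}$ the existence of the corresponding subgroup of $\pi_1(M)$. The smallest case in (1) is easy, since a finitely generated residually finite group with profinite completion $\widehat{\mathbb{Z}}$ embeds into $\widehat{\mathbb{Z}}$ and is torsion-free of rank one, hence $\mathbb{Z}$. Parts (3) and (4) require goodness of the fundamental groups of the relevant Seifert and product geometries together with a concrete description of their maximal abelian subgroups, both of which are already implicit in the proof of Theorem \ref{thm:geo}. With this lifting step in hand, the remainder of Corollary \ref{Cor:3} is a direct assembly of the geometry-to-group dictionary recorded above.
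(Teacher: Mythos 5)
There is a genuine gap, and it sits exactly where the paper's proof does its only real work: part (4). Your dictionary entry for $\mathbb{H}^3\times\mathbb{E}$ is a statement about the discrete group ($\pi_1(M)$ contains $\mathbb{Z}^2$ but no $\mathbb{Z}^3$), and your list of ``routine facts about profinite completions'' only produces \emph{containments} (images of $\mathbb{Z}^n$, preservation of nilpotency class, etc.). But the forward implication of (4) is a \emph{non}-containment statement about the profinite group: $\widehat{\pi_1(M)}$ must contain no copy of $\widehat{\mathbb{Z}}^3$. This does not follow from the absence of $\mathbb{Z}^3$ in $\pi_1(M)$ by any routine argument, since profinite completions can contain abelian subgroups with no discrete counterpart; nor can it be obtained by the elimination scheme you propose, because here the profinite condition is the conclusion, not the hypothesis. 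The paper proves it by passing to a finite-index normal subgroup $\pi_1(S)\times\mathbb{Z}$ with $S$ a closed hyperbolic 3-manifold, so that $\widehat{\pi_1(S)}\times\widehat{\mathbb{Z}}$ is open in $\widehat{\pi_1(M)}$, projecting a putative $\widehat{\mathbb{Z}}^3$ to $\widehat{\pi_1(S)}$ as in Proposition \ref{prop:hyp1}, noting that the image still contains a $\widehat{\mathbb{Z}}^2$, and invoking Theorem A of Wilton--Zalesskii \cite{Wil:2017} (which rests on Agol--Wise) to get a contradiction with the hyperbolicity of $S$. Your proposal never cites this theorem or any substitute for it, and the sources you point to cannot supply it: ``goodness'' controls cohomology, not abelian subgroups of the completion, and the proof of Theorem \ref{thm:geo} concerns Seifert fibred 4-manifolds over hyperbolic 2-orbifolds and says nothing about hyperbolic 3-manifold groups.

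A related, smaller misdiagnosis: you identify the ``main obstacle'' as lifting a (normal) $\widehat{\mathbb{Z}}^k$ from $\widehat{\pi_1(M)}$ back to $\pi_1(M)$. Since $M$ is assumed geometric with one of the sixteen admissible geometries, no such lifting is needed -- the backward directions of (3) and (4) can indeed be run by elimination, as the paper implicitly does by citing its earlier propositions. What the elimination does require, however, are again non-containment facts of the same flavour (e.g.\ that the completion of an $\mathbb{H}^3\times\mathbb{E}$-group has no normal $\widehat{\mathbb{Z}}^2$, via the same projection-plus-\cite{Wil:2017} argument), so the missing ingredient is the same one. The remaining parts (1), (2), (5) and the assembly are fine and match the paper's (very terse) treatment, but without the Wilton--Zalesskii input and the projection argument your proof of (4) -- and hence of the corollary -- is incomplete.
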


We should remark that in the proof of Theorem \ref{thm:1}, we use the result of Wilton and Zalesskii about profinite completions of $\mathbb{H}^3$-lattice groups \cite{Wil:2017},  which in turn  depends essentially on the dramatic developments of Agol \cite{Agol:2013} and Wise \cite{Wise:2009}. To make this paper to be reasonably self-contained, we introduce three aspects which are profinite groups, 4-dimensional Seifert manifolds and 4-dimensional geometries in the sense of Thurston. The question of distinguishing geometries for 4-manifolds in the sense of Thurston by the profinite completions of fundamental groups has some part left. In the next period, we want to distinguish  lattices with geometries $\mathbb{H}^4$, $\mathbb{H}_{\mathbb{C}}^2$ and $\mathbb{H}^2\times\mathbb{H}^2$ by the profinite completions of their fundamental groups.

\section{4-dimensional Thurston's geometries}

For a geometry $(X,G)$ and any point $x$ of the $n$-manifold $X$, the stabilizer $G_x$ of the transitive, effective and isometric $G$-action on $X$ is compact. In fact, $G_x$ is isomorphic to a closed subgroup of $O(n)$ since the action is isometric. By classifying all the possible closed subgroups of $SO(4)$, Filipkiewicz \cite{Fi:1984} proved that there are 19 classes of maximal geometries in 4-dimension listed in Table 1.


\begin{table}[htbp]\label{Table}
	\caption{Nineteen  classes of 4-dimensional  geometries  of Filipkiewicz}
	\renewcommand\arraystretch{1.2}
	\begin{tabular}{c|c}
		\hline
		type &  geometries  \\
		\hline

		solvable type  & $\mathbb{E}^{4}$, $\mathbb{N}il^{3}\times \mathbb{E}$, $\mathbb{N}il^{4}$,  $\mathbb{S}ol_{m,n}^{4}$, $\mathbb{S}ol^{4}_{0}$, $\mathbb{S}ol^{4}_{1}$\\
		\hline

		product type &  $ \mathbb{H}^2\times\mathbb{S}^2$,  $\mathbb{S}^3\times\mathbb{E}$, $\mathbb{S}^2\times\mathbb{E}^2$, $\mathbb{H}^{3}\times \mathbb{E}$, $\widetilde{\mathbb{SL}}\times \mathbb{E}$, $ \mathbb{H}^{2}\times \mathbb{E}^{2}$, $\mathbb{H}^{2}\times \mathbb{H}^{2}$ \\	
		
		\hline
	
		hyperbolic &  $ \mathbb{H}^{4}$, $\mathbb{H}^{2}_{\mathbb{C}}$ \\
		\hline	
		finite group  & $\mathbb{S}^4$, $\mathbb{C}P^2$, $\mathbb{S}^2\times \mathbb{S}^2$  \\
		\hline
		$\mathscr{T}\mathbb{H}^{2}$   &  $\mathbb{F}^4$ \\

		\hline
	\end{tabular}

\end{table}

The first type of geometries  are solvable Lie type geometries including  $\mathbb{E}^{4}$, $\mathbb{N}il^{3}\times \mathbb{E}$, $\mathbb{N}il^{4}$,  $\mathbb{S}ol_{m,n}^{4}$, $ \mathbb{S}ol^{4}_{0}$ and $\mathbb{S}ol^{4}_{1}$. There are countable infinitely many maximal geometries belonging to the $ \mathbb{S}ol_{m,n}^{4}$-class. Suppose $M$ is a closed 4-manifold with $\chi(M)=0$, then Theorem $8.1$ of Hillman \cite{Hi:2002} gives the equivalent conditions for $M$ to be an infravable manifold. Here \textit{infrasolvable manifold} means that  $M$ can be seen as a quotient $X/\Gamma$, where $X$ is a 1-connected solvable Lie group and $\Gamma$ is a closed torsion-free subgroup of $Aff(X)=X\rtimes Aut(X)$.

The 3-dimensional geometry $\mathbb{N}il^3$ could be seen as the group of upper triangular matrices  $\{[r,s,t]=\begin{pmatrix}
1&r&t\\0&1&s\\0&0&1
\end{pmatrix}|\ r,s,t\in\mathbb{R}\}$ which has abelianization $\mathbb{R}^2$ represented by $r,s$ and centre $\mathbb{R}$ represented by $t$. So a $\mathbb{N}il^3$-manifold naturally admits a 3-dimensional Seifert fibred structure. The elements $[1,0,0]$, $[0,1,0]$ and $[0,0,1\slash q]$ generate a discrete cocompact subgroup of $\mathbb{N}il^3$ which is isomorphic to $\Gamma_q$ defined in (\ref{3}) later for any integer $q>1$. For a closed 4-manifold $M$ admitting geometry $\mathbb{N}il^3\times\mathbb{E}$, there is a finite cover $M'$ of $M$ such that $\pi_1(M')\cong \Gamma_q\times \mathbb{Z}$ for some integer $q>1$, which means that $\pi_1(M)$ has virtually nilpotent class 2. The definition of nilpotent class is in Section 6.

The geometry $\mathbb{N}il^{4}$ is the semi-direct product $\mathbb{R}^{3}\rtimes_{\theta}\mathbb{R}$ where $\theta=[t,t,\frac{t^2}{2}]$ and the component of its isometry group with identity is $\mathbb{N}il^{4}$ itself as left translation. It has abelianization $\mathbb{R}^{2}$ and  typical central series consisting of centre $\zeta\mathbb{N}il^{4}$ and commutator subgroup $(\mathbb{N}il^4)'$ such that  $\zeta\mathbb{N}il^{4}\cong\mathbb{R}<(\mathbb{N}il^4)'\cong\mathbb{R}^2$. For a closed orientable $\mathbb{N}il^4$-manifold, its fundamental group has nilpotent class 3.

The 3-dimensional geometry $\mathbb{S}ol^3$ is the Lie group defined as the semi-direct product $\mathbb{R}^2\rtimes_{\varphi}\mathbb{R}$ such that $\varphi(t)=diag[e^t,e^{-t}]$. A closed orientable $\mathbb{S}ol^3$-manifold inherits a natural 2-dimensional foliation such that the manifold is actually a $T^2$-bundle over $S^1$. If a closed 4-manifold $M$ admits geometry $\mathbb{S}ol^3\times\mathbb{E}$, it naturally admits a 4-dimensional Seifert fibred structure over a euclidean 2-orbifold (see Section 3).

The geometry $\mathbb{S}ol_{m,n}^{4}$ represents  the semi-direct product $\mathbb{R}^{3}\rtimes_{\theta_{m,n}}\mathbb{R}$ where $\theta_{m,n}(t)=[e^{at},e^{bt},e^{ct}]$ such that $e^a$, $e^b$ and $e^c$ are three distinct roots (with $a<b<c$ real) of $f(x)=x^3-mx^2+nx-1$ where $m,n$ are positive integers. We could always assume $m\leq n$ since $\mathbb{S}ol_{m,n}^{4}$ is isomorphic to $\mathbb{S}ol_{n,m}^{4}$. In fact, $m$ and $n$ satisfy the condition $0<2\sqrt{n}\leq m\leq n$ since $\theta_{m,n}$ has three distinct real roots. Under this notation, the geometry $\mathbb{S}ol_{m,n}^{4}$ is equivalent to $\mathbb{S}ol_{m',n'}^{4}$ if and only if there exists $\lambda\in\mathbb{R}$ such that $(a,b,c)=\lambda(a',b',c')$.
The fundamental group of a closed orientable $\mathbb{S}ol_{m,n}^4$-manifold has the form of $\mathbb{Z}^3\rtimes_B\mathbb{Z}$, where $B\in GL(3,\mathbb{Z})$ with three distinct real eigenvalues.
When $m=n$, $\theta_{m,m}$ has one and only one eigenvalue of $1$. Hence $\mathbb{S}ol_{m,m}^{4}$ is actually $\mathbb{S}ol^{3}\times\mathbb{E}$ where the manifolds admit Seifert fibred structures.

The geometry $\mathbb{S}ol_0^4$ is the semi-direct product $\mathbb{R}^{3}\rtimes_{\xi}\mathbb{R}$ where $\xi(t)$ denotes the diagonal matrix $diag[e^t,e^t,e^{-2t}]$. For a closed orientable $\mathbb{S}ol_0^4$-manifold, its fundamental group is the semi-direct product $\mathbb{Z}^3\rtimes_B\mathbb{Z}$, where $B\in GL(3,\mathbb{Z})$ with one real eigenvalue and two conjugate complex eigenvalues.

The general method to describe the fundamental group of 4-manifold in this paper follows Hillman's book \cite{Hi:2002}. We use Hirsch-length to measure solvable groups. A \textit{virtually polycyclic} group $G$ is one which has a finite index subgroup with a composition series such that each quotient is infinite cyclic. Then the number of infinite cyclic factors is independent of the choices of finite index subgroup and the composition series, which is defined as \textit{Hirsch-length} $h(G)$ of $G$. For example,
\begin{align}\label{3}
\Gamma_q=\left\langle x,y,z|\ xz=zx,\ yz=zy,\ xy=z^q yx \right\rangle
\end{align}  for any natural number $q>1$ is a typical virtually polycyclic group such that $h(\Gamma_q)=3$, and it's important in the $\mathbb{N}il^3\times\mathbb{E}$ and $\mathbb{S}ol^4_1$ geometries. A closed 4-manifold $M$ with one of the six solvable Lie geometries always satisfies $h(\pi_1(M))=4$ by Theorem 8.1 of \cite{Hi:2002}.

The $\mathbb{S}ol_1^4$-geometry is the group of real matrices $\{\begin{pmatrix}
1&x&z\\0&t&y\\0&0&1
\end{pmatrix}|\ x,y,z,t\in\mathbb{R},\ t>0 \}$. The fundamental group of a closed orientable $\mathbb{S}ol_1^4$-manifold $M$ is an extension of $\Gamma_q$ by $\mathbb{Z}$ for some integer $q>1$. The extension is in fact a semi-direct product $\Gamma_q\rtimes\mathbb{Z}$, and $M$ is the corresponding mapping torus of some $\mathbb{N}il^3$-manifold with fundamental group $\Gamma_q$. A closed orientable 4-manifold with geometry $\mathbb{N}il^3\times\mathbb{E}$ or $\mathbb{N}il^4$ could also be realized as a mapping torus of a  $\mathbb{N}il^3$-manifold, and its geometry depends on the automorphism of $\Gamma_q$.

The following is concluded from Chapter 8 of Hillman's \cite{Hi:2002}. Let $G$ be a group, we use $\sqrt{G}$ to denote its \textit{nilradical}, which means its maximal locally nilpotent normal subgroup of $G$. Here \textit{locally nilpotent} means that every finitely generated subgroup of $\sqrt{G}$ is nilpotent.  For a closed 4-dimensional geometric manifold $M$, its geometry depends largely on $\sqrt{\pi_1(M)}$, which is the maximal nilpotent normal subgroup of $\pi_1(M)$. A 4-manifold $M$ admits geometry $\mathbb{E}^4$ if and only if $\sqrt{\pi_1(M)}\cong\mathbb{Z}^4$, while it has geometry $\mathbb{N}il^3\times\mathbb{E}$ or $\mathbb{N}il^4$ if $\sqrt{\pi_1(M)}$ has nilpotent class of 2 or 3 respectively. When $M$ admits geometry $ \mathbb{S}ol_{m,n}^{4}$ or $ \mathbb{S}ol^{4}_{0}$, the nilradical $\sqrt{\pi_1(M)}\cong\mathbb{Z}^3$, and $M$ could also be seen as the mapping torus of $\mathbb{R}^3\slash\mathbb{Z}^3$ when $m\neq n$. The manifold with geometry $\mathbb{S}ol^4_1$ has $\Gamma_q$ as its nilradical for some integer $q>1$.

When the manifold admits one of geometries $\mathbb{S}^4$, $\mathbb{C}P^2$ and $\mathbb{S}^2\times \mathbb{S}^2$, our problem on profinite completion detecting geometry is out of consideration since these geometries have compact models.  
Finally the  geometry $\mathbb{F}^4$ is the tangent space $\mathscr{T}\mathbb{H}^2$ of the hyperbolic plane $\mathbb{H}^2$ which can not be realized by any closed 4-manifold, so this geometry is also excluded.

\section{Seifert fibred 4-manifolds}

By a closed orientable \textit{Seifert fibred 4-manifold} $M$, it means that it is the total space of a bundle  $\pi:M\rightarrow B$ over a closed $2$-orbifold $B$ such that the general fibre is the $2$-torus $T^2$. To describe all the Seifert 4-manifolds, there are some invariants which are similar to the case in 3-dimensional Seifert fibred manifolds \cite{Ue:1990,Hi:2002}.

Locally, a point $p$ on the base 2-orbifold $B$ has a neighbourhood $D=D^2/G$  where $D^2$ is a 2-disc centred at $0\in\mathbb{R}^2$ and $G$ is a discrete subgroup of $O(2)$ corresponding to the stabilizer of $p$. 
Then $\pi^{-1}(D)$ is identified with $T^2\times D^2\slash G$ where the action of $G$ on $T^2\times D^2$ is free. The $G$-action on $T^2\times D^2$ is some lift of $G$-action on $D^2$ so that $\pi|_{\pi^{-1}(D)}$ is the map $T^2\times D^2\slash G\to D^2\slash G$ induced from the natural projection $T^2\times D^2\to D^2$. Here $T^2$ is identified with $\mathbb{R}^2\slash\mathbb{Z}^2$ and the point of $T^2\times D^2$ is represented as $(x,y,z)$ with $(x,y)\in\mathbb{R}^2\slash \mathbb{Z}^2$ and $z\in\mathbb{C}$, $|z|\leq 1$.
 
If $G$ is trivial, we say the point $p\in B$  is a \textit{non-singular point}, and the preimage of $D$ in the total space is just $T^2\times D^2$.

If $G=\mathbb{Z}_{m}$ for $m\geq2$ where the generator $\rho$ acts on $T^2\times D^2$ by $\rho(x,y,z)=(x-\frac{a}{m},\ y-\frac{b}{m},\ e^{\frac{2\pi i}{m}}z)$ with $g.c.d.(m,a,b)=1$. Then $p$ is a \textit{cone point} of $B$ and the fibre over $p$ is called a \textit{multiple torus of type $(m,a,b)$}.

If $G=\mathbb{Z}_{2}$ where the generator $\iota$ acts on $T^2\times D^2$ by $\iota(x,y,z)=(x+\frac{1}{2},\ -y,\ \overline{z})$, then $p$ is on the reflector circle of $B$ and $\pi^{-1}(D)$ is a twisted $D^2$-bundle over the Klein bottle $K$. In this case, the underlying space $|B|$ is a surface with boundary, and each connected component of boundary has a reflector structure of $B$. So $B$ is still a closed orbifold.

If $G$ is the dihedral group $D_{2m}=\left\langle \iota,\ \rho |\ \iota^2=\rho^m=1, \iota\rho\iota^{-1}=\rho^{-1}\right\rangle$ where the generators $\iota$ and $\rho$ act on $T^2\times D^2$ by $\iota(x,y,z)=(x+\frac{1}{2},\ -y,\ \overline{z})$ and $\rho(x,y,z)=(x,\ y-\frac{b}{m},\ e^{\frac{2\pi i}{m}}z)$ where $g.c.d.(m,b)=1$. In this case, $p$ is a \textit{corner reflector} of angle $\pi\slash m$ and the fibre over $p$ is called a \textit{multiple Klein bottle of type $(m,0,b)$}. It means that the base orbifold has reflector circles such that each circle contains finitely many corner reflectors.  From now on, we don't deal with the Seifert 4-manifolds with reflector circles on purpose, for each of them has a canonical double cover which is a Seifert 4-manifold without reflector circles.

Globally, there still needs some invariants to describe the fibration. Suppose the base orbifold $B$ has no reflector circles, then we use the 4-dimensional Seifert invariants defined by Ue \cite{Ue:1990}:

(1) the \textit{monodromy matrices} $A_i,B_i\in SL(2,\mathbb{Z})$ satisfying $\prod_{i=1}^{g}[A_i,B_i]=I$ along the collection of standard generators $s_i,t_i\ (i=1,2,...,g)$ of $\pi_1(|B|)$, where the underlying space $|B|$ of $B$ is an orientable surface of genus $g$.

(1') the \textit{monodromy matrices} $A'_i\in GL_2(\mathbb{Z})$ along the collection of standard generators $v_i\ (i=1,2,...,g')$ of $\pi_1(|B'|)$, where the underlying space $|B'|$ of $B'$ is a non-orientable surface of genus $g'$.

(2) the tuple $(m_i,a_i,b_i)$ to describe the fibre type over singular point $p_i\ (i=1,...,t)$ of the base orbifold.

(3) the \textit{obstruction} $(a',b')\in\mathbb{Z}^2$. Let $q_i$ be the lift of the meridian circle centred at a singular point $p_i$, then $(a',b')$ is the obstruction to extend $\cup q_i $ to the cross section in $\pi^{-1}(B-\cup$(the disk neighbourhood of $p_i))$.

(4) if all the monodromies are trivial, the euler number could be defined as $e=(a'+\Sigma a_i/m_i,\ b'+\Sigma b_i/m_i)\in\mathbb{Q}^2$ (mod the action of $GL(2,\mathbb{Z})$) which is similar to the definition of rational euler number in 3-dimensional Seifert fibred space. The details are in \cite{Ue:1990}.

For example of the simplest case, we can describe $M$, a $T^2$-bundle over $T^2$, as $\left\lbrace A,B,(m,n)\right\rbrace$ for $ABA^{-1}B^{-1}=I$ and $m,n\in\mathbb{Z}$. Then the fundamental group of $M$ is
\begin{center}
	$\pi_{1}(M)=\left\langle l,h,s,t|\ [l,h]=1,\ s(l,h)s^{-1}=(l,h)A,\ t(l,h)t^{-1}=(l,h)B,\ [s,t]=l^m h^n\right\rangle $,
\end{center}
where $l,h$ are the generators of the general fibre $T^2$ and $s,t$ are the lift of generators of base $T^2$. The representation is not unique since there is a certain transformation between two sets of Seifert invariants of one Seifert 4-manifold.

By classifying closed orientable 4-dimensional Seifert fibred manifolds using the 4-dimensional Seifert invariants, Ue connected these manifolds to Thurston's geometries:

\begin{theorem}\cite{Ue:1990}
	Let $M$ be a closed orientable 4-manifold which is Seifert fibred over a 2-orbifold B, then
	
	(1) $B$ is spherical or bad if and only if $M$ admits geometry $\mathbb{S}^3\times\mathbb{E}$ or $\mathbb{S}^2\times\mathbb{E}^2$;
	
	(2) $B$ is flat if and only if $M$ admits one of geometries $\mathbb{E}^{4}$, $\mathbb{N}il^{4}$, $\mathbb{N}il^{3}\times \mathbb{E}$ or $\mathbb{S}ol^3\times\mathbb{E}$ except for two flat manifolds which are not Seifert \cite{Hi:2002};
	
	(3)  $B$ is hyperbolic if and only if $M$ admits one of geometries $\widetilde{\mathbb{S}L_2}\times \mathbb{E}$ and $ \mathbb{H}^{2}\times \mathbb{E}^{2}$, or $M$ is non-geometric.
\end{theorem}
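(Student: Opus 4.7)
The plan is to exploit the Seifert structure to extract a short exact sequence
\[
1 \to \mathbb{Z}^2 \to \pi_1(M) \to \pi_1^{\mathrm{orb}}(B) \to 1,
\]
(valid for bases without reflector circles, via the local models described above), and then to analyse $\pi_1(M)$ through the monodromy $\rho: \pi_1^{\mathrm{orb}}(B) \to GL(2,\mathbb{Z})$ and the Euler-class invariant $(a',b')$ from the list of Ue invariants. These data determine $\pi_1(M)$ up to isomorphism, and the group-theoretic criteria for each 4-dimensional geometry recalled in Section 2 can then be checked case by case.

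For the forward direction I would inspect each of the candidate geometries and exhibit its Seifert structure directly from the Lie-group description. The $T^2$-fibre comes from a lattice in the $\mathbb{E}^2$-factor for $\mathbb{S}^2 \times \mathbb{E}^2$, $\mathbb{E}^4$, $\mathbb{S}ol^3 \times \mathbb{E}$ (using $\mathbb{R}^2 \subset \mathbb{S}ol^3$), $\widetilde{\mathbb{SL}_2} \times \mathbb{E}$ and $\mathbb{H}^2 \times \mathbb{E}^2$; and from a central $T^2$ in the cases $\mathbb{S}^3 \times \mathbb{E}$ (Hopf direction together with $\mathbb{E}$), $\mathbb{N}il^3 \times \mathbb{E}$ (centre of $\mathbb{N}il^3$ together with $\mathbb{E}$), and $\mathbb{N}il^4$ (the commutator subgroup $(\mathbb{N}il^4)' \cong \mathbb{R}^2$). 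Reading off the base in each case produces a spherical or bad orbifold in case (1), a flat orbifold in case (2), and a hyperbolic orbifold in case (3).

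Conversely, assume the geometric type of $B$ is fixed. If $\pi_1^{\mathrm{orb}}(B)$ is finite, then $\pi_1(M)$ is virtually $\mathbb{Z}^2$, and passing to a finite cover that trivialises the monodromy yields a $T^2$-bundle over $S^2$; the only 4-dimensional geometries whose fundamental groups are virtually $\mathbb{Z}^2$ are $\mathbb{S}^2 \times \mathbb{E}^2$ and $\mathbb{S}^3 \times \mathbb{E}$, distinguished by the triviality of a $T^2$-reduction. If $B$ is flat, then $\pi_1(M)$ is polycyclic of Hirsch length $4$, and Hillman's Theorem 8.1 forces $M$ to be infrasolvable; the specific solvable geometry is then pinned down by the nilpotency class of $\sqrt{\pi_1(M)}$ together with the monodromy type, using the criteria recalled at the end of Section 2.

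The main obstacle will be case (3): when $B$ is hyperbolic, one has to show that $M$ is geometric \emph{exactly} when $\rho$ has finite image, and in that case produce $\mathbb{H}^2 \times \mathbb{E}^2$ (vanishing Euler class) or $\widetilde{\mathbb{SL}_2} \times \mathbb{E}$ (non-vanishing Euler class) on a suitable finite cover and then pull back. The obstruction is that an infinite-order element of $GL(2,\mathbb{Z})$ in the image of $\rho$ prevents $\pi_1(M)$ from being a lattice in any of the nineteen Lie groups of Filipkiewicz: the solvable candidates are excluded because the hyperbolic monodromy forces $h(\pi_1(M))>4$, while the hyperbolic and mixed-hyperbolic candidates have no normal $\mathbb{Z}^2$ subgroup with an infinite-order monodromy by a hyperbolic surface quotient. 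This exhaustive elimination against each remaining geometry's structural constraints is the technically delicate step, and is carried out in Ue's paper.
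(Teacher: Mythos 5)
This statement is quoted from Ue \cite{Ue:1990}; the paper itself gives no proof of it, so your sketch has to stand on its own, and as written it has genuine gaps. First, the exact sequence $1\to\mathbb{Z}^2\to\pi_1(M)\to\pi_1^{orb}(B)\to 1$ that your whole analysis starts from is only valid when the base orbifold is aspherical (flat or hyperbolic); over a spherical or bad base the fibre group need not inject, because $\pi_2$ of the base can kill part of it. For example $S^3\times S^1$ is a $T^2$-bundle over $S^2$ (Hopf fibration times the circle factor) with $\pi_1\cong\mathbb{Z}$, so your claim in case (1) that $\pi_1(M)$ is virtually $\mathbb{Z}^2$ is false precisely for the $\mathbb{S}^3\times\mathbb{E}$ manifolds; what one actually gets is that $\pi_1(M)$ is virtually a quotient of $\mathbb{Z}^2$, and the dichotomy virtually $\mathbb{Z}$ versus virtually $\mathbb{Z}^2$ is what separates $\mathbb{S}^3\times\mathbb{E}$ from $\mathbb{S}^2\times\mathbb{E}^2$. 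Relatedly, in your list of geometries whose fibre comes from ``a lattice in the $\mathbb{E}^2$-factor'' you include $\widetilde{\mathbb{SL}_2}\times\mathbb{E}$, which has no $\mathbb{E}^2$-factor: there the fibre torus comes from the centre of $\widetilde{\mathbb{SL}_2}$ together with the $\mathbb{E}$-direction. Second, over a hyperbolic base your exclusion of the solvable geometries via ``$h(\pi_1(M))>4$'' is not the right tool: $\pi_1^{orb}(B)$ contains a nonabelian free group, so $\pi_1(M)$ is not virtually solvable at all and Hirsch length is not even defined for it; that, not a Hirsch-length count, is the correct obstruction.

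More seriously, the heart of item (3) --- that over a hyperbolic base $M$ is geometric exactly when the monodromy image in $GL(2,\mathbb{Z})$ is finite, with the (rational) Euler invariant then selecting $\mathbb{H}^2\times\mathbb{E}^2$ versus $\widetilde{\mathbb{SL}_2}\times\mathbb{E}$ --- is precisely the step you defer to ``Ue's paper''. That is the substantive content of the theorem (compare Theorem B of \cite{Ue:1991} and Theorems 9.5--9.6 of \cite{Hi:2002}, both quoted later in this paper), so deferring it leaves your proposal as an outline of Ue's strategy rather than a proof. Finally, your ``forward'' direction only exhibits \emph{some} Seifert fibration compatible with each geometry; since Seifert fibrations of these 4-manifolds are not unique over flat, spherical or bad bases (as the paper notes, citing \cite{Ue:1988}), to conclude the stated equivalences you also need that the geometric type of the base is forced, e.g.\ by combining the base-to-geometry implications with the uniqueness of the geometric type of a closed 4-manifold; this should be made explicit.
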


\begin{theorem}\cite{Ue:1990}
	Let $M$, $N$ be two closed orientable Seifert 4-manifolds over aspherical bases with isomorphic fundamental groups, then $M$ is diffeomorphic to $N$. Moreover, if the base orbifolds of $M$ and $N$ are hyperbolic or $M$ and $N$ admit geometry  $\mathbb{N}il^{4}$ or $\mathbb{S}ol^3\times\mathbb{E}$, then the diffeomorphism is fibre-preserved between $M$ and $N$.
\end{theorem}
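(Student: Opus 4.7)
The plan is to exploit asphericity, identify the torus fibre subgroup group-theoretically, and then reconstruct a (fibre-preserving, when possible) diffeomorphism from the Seifert invariants.

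First I would observe that because the base orbifold $B$ is aspherical and the general fibre is $T^{2}$, the total space $M$ is aspherical: the universal orbifold cover $\widetilde{B}$ is contractible ($\mathbb{R}^{2}$ in the flat case, $\mathbb{H}^{2}$ in the hyperbolic case), and pulling the Seifert fibration back to $\widetilde{B}$ yields a genuine $T^{2}$-bundle over a contractible base, whose universal cover $\widetilde{M}\cong\mathbb{R}^{2}\times\widetilde{B}$ is contractible. Hence $M$ and $N$ are both $K(\pi,1)$'s, and any isomorphism $\varphi\colon\pi_{1}(M)\xrightarrow{\cong}\pi_{1}(N)$ is induced by a homotopy equivalence $f\colon M\to N$.

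Next I would characterise the fibre subgroup $F=\pi_{1}(T^{2})\cong\mathbb{Z}^{2}$ intrinsically inside $\pi_{1}(M)$. Both $\pi_{1}(M)$ and $\pi_{1}(N)$ sit in a short exact sequence
\begin{equation*}
1\longrightarrow \mathbb{Z}^{2}\longrightarrow \pi_{1}(M)\longrightarrow \pi_{1}^{\mathrm{orb}}(B)\longrightarrow 1,
\end{equation*}
and my task is to show that $\varphi(F_{M})=F_{N}$, at least after post-composition with an automorphism of $\pi_{1}(N)$. In the hyperbolic-base case I would argue that any normal abelian subgroup of $\pi_{1}(M)$ must lie in $F_{M}$, because an orbifold hyperbolic surface group contains no non-trivial normal abelian subgroup, so $F_{M}$ is the unique maximal normal abelian subgroup of $\pi_{1}(M)$ and is therefore preserved by $\varphi$. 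In the $\mathbb{N}il^{4}$ case I would use the Hirsch-length-$4$ nilradical: its lower central series is rigid, and the fibre subgroup can be identified with the intersection of the commutator subgroup of the nilradical with the centraliser chain, giving a canonical copy of $\mathbb{Z}^{2}$. In the $\mathbb{S}ol^{3}\times\mathbb{E}$ case the fibre is the unique normal $\mathbb{Z}^{2}$ on which the monodromy of the $T^{2}$-bundle structure over the base $T^{2}$ acts with two distinct real eigenvalues, and this too is canonical. Quotienting by $F$ then yields an isomorphism $\pi_{1}^{\mathrm{orb}}(B_{M})\xrightarrow{\cong}\pi_{1}^{\mathrm{orb}}(B_{N})$, which by the classical 2-dimensional orbifold rigidity (Nielsen realisation / Fenchel--Nielsen) is realised by a homeomorphism of the base orbifolds $B_{M}\xrightarrow{\cong} B_{N}$.

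The remaining step is to upgrade the algebraic match to a fibre-preserving diffeomorphism. Having matched base orbifolds and fibre subgroups, I would read off the Ue invariants of $M$ from $\pi_{1}(M)$: the monodromy representation $\pi_{1}^{\mathrm{orb}}(B)\to\mathrm{Out}(\mathbb{Z}^{2})=GL(2,\mathbb{Z})$ is induced by conjugation in the extension, the local invariants $(m_{i},a_{i},b_{i})$ are recovered from the conjugacy classes of torsion lifts of cone-point generators, and the obstruction $(a',b')$ (in cases where the euler class makes sense) is read off from the relation coming from the surface relator. Since $\varphi$ respects all these pieces of data, the invariants of $M$ and $N$ coincide, and therefore, by Ue's classification of closed orientable Seifert $4$-manifolds via Seifert invariants, there is a fibre-preserving diffeomorphism $M\to N$.

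The main obstacle is the first part of the theorem, where we do \emph{not} assume the base is hyperbolic or the geometry is $\mathbb{N}il^{4}$ or $\mathbb{S}ol^{3}\times\mathbb{E}$: in the flat-base cases (e.g.\ geometries $\mathbb{E}^{4}$ or $\mathbb{N}il^{3}\times\mathbb{E}$) the fibre subgroup is genuinely not unique, since $\pi_{1}(M)$ may admit several inequivalent Seifert structures. To handle this I would abandon fibre preservation and instead appeal to the fact that such $M$ are infrasolvmanifolds (the geometries involved are all solvable), and rely on Mostow--type rigidity for infrasolvmanifolds: an isomorphism of their (virtually polycyclic) fundamental groups is induced by an affine diffeomorphism of the underlying solvmanifold models. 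This gives the desired diffeomorphism without requiring fibre preservation, completing the weaker conclusion of the first statement.
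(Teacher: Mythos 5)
The paper does not prove this statement at all: it is quoted verbatim from Ue \cite{Ue:1990}, so there is no internal proof to compare against. Judged on its own terms, your sketch follows what is essentially Ue's strategy (identify the fibre $\mathbb{Z}^2$ group-theoretically, descend to the base orbifold group, reconstruct the Seifert invariants from the extension, and invoke the classification of Seifert $4$-manifolds by their invariants), and the key observations are sound: asphericity of the total space, uniqueness of the maximal normal abelian subgroup over a hyperbolic base, and canonicity of the fibre for $\mathbb{N}il^4$ and $\mathbb{S}ol^3\times\mathbb{E}$ via the (isolator of the) commutator subgroup.

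Still, two points in your outline carry real weight that you pass over quickly. First, in the flat-base case you appeal to smooth rigidity of infrasolvmanifolds; this is a genuine theorem but a heavy black box, it is delicate precisely in dimension $4$, and to use it you must first know that every closed orientable Seifert $4$-manifold over a flat base is geometric of solvable Lie type --- which is itself part of Ue's geometrization theorem, so this step is not free. Second, the passage ``$\varphi$ respects all these pieces of data, hence the invariants coincide'' is where the actual work lies: the invariants are only well defined up to the standard equivalence moves, the obstruction term $(a',b')$ must be recovered as the extension class in $H^2(\pi_1^{\mathrm{orb}}(B);\mathbb{Z}^2)$ and compared up to the monodromy action, and your characterisation of the fibre in the $\mathbb{S}ol^3\times\mathbb{E}$ case (``the $\mathbb{Z}^2$ on which the monodromy of the $T^2$-bundle acts with distinct real eigenvalues'') is circular as stated, since it presupposes a chosen fibration; it should be replaced by an intrinsic description such as the isolator of $[\pi,\pi]$ inside the nilradical. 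With those repairs the outline is a faithful, if compressed, reconstruction of the cited result rather than a new route.
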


However, it needs to be clarified that the Seifert fibration structures of a Seifert 4-manifold may not be unique when the base orbifold is flat, spherical or bad, and the examples with geometries $\mathbb{E}^{4}$ and $\mathbb{N}il^{3}\times \mathbb{E}$ are exhibited in \cite{Ue:1988}. There also exist 4-dimensional Seifert manifolds over hyperbolic base such that they can not admit any 4-dimensional geometries. From the view of representing the Seifert fibred 4-manifolds with Seifert invariants, there is Theorem B of \cite{Ue:1991} by Ue:

\begin{theorem}
	A Seifert 4-manifold $M$ over an orientable hyperbolic base orbifold B admits a geometric structure of type $\mathbb{X}$ if and only if $M$ satisfies one of the following conditions:
	
	(1) All the monodromies are represented by powers of a common periodic matrix in $SL(2,\mathbb{Z})$ or all the monodromies are trivial and the rational euler number is zero. In this case $\mathbb{X}=\mathbb{H}^2 \times \mathbb{E}^2$;
	
	(2) All the monodromies are trivial and the rational euler number is non-zero. In this case $\mathbb{X}=\widetilde{\mathbb{S}L_2}\times \mathbb{E}$.
\end{theorem}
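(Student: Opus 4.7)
The plan is to work through the Seifert extension
\[
1\longrightarrow \mathbb{Z}^2\longrightarrow \pi_1(M)\longrightarrow \pi_1^{\mathrm{orb}}(B)\longrightarrow 1
\]
in both directions, matching the algebraic data (monodromy matrices, exceptional fibre invariants, rational Euler number) against the structure of the isometry groups of the two relevant geometries supplied by Theorem 3.1.

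First I would address sufficiency. In condition (1), when all monodromies are powers of a common periodic $A\in SL(2,\mathbb{Z})$ of order $d$, I would pass to the $d$-fold cyclic cover of $B$ on which the monodromy representation becomes trivial; the resulting Seifert $4$-manifold has trivial monodromies, and after a further finite cover absorbing the exceptional fibres becomes a principal $T^2$-bundle over a hyperbolic surface with rational Euler number zero. Such a bundle is trivial, giving $T^2\times\Sigma$ and hence the product geometry $\mathbb{H}^2\times\mathbb{E}^2$ on the cover, which descends to $M$ after choosing a flat metric on the $T^2$ factor invariant under the finite deck group. The subcase of trivial monodromies with zero rational Euler number is handled identically without the initial cyclic cover. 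In condition (2), trivial monodromies present $M$ as a principal $T^2$-bundle over $B$ with nonzero Euler class in $H^2(B;\mathbb{Z}^2)$; choosing a primitive direction in $\mathbb{Z}^2$ splits off a trivial $S^1$ factor (up to finite cover) and exhibits $M$ as $S^1\times N$, where $N$ is a nontrivial circle bundle over the hyperbolic orbifold $B$ and is therefore a $\widetilde{\mathbb{S}L_2}$-manifold. This yields the $\widetilde{\mathbb{S}L_2}\times\mathbb{E}$ geometry on $M$.

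For necessity, Theorem 3.1 leaves only the two model geometries. In each case the $\mathbb{Z}^2$ fibre subgroup must be identified with the abelian factor of the model. If $\mathbb{X}=\widetilde{\mathbb{S}L_2}\times\mathbb{E}$, then the centre of $\pi_1(M)$ has rank $2$, forcing the conjugation action of $\pi_1^{\mathrm{orb}}(B)$ on $\mathbb{Z}^2$ to be trivial; since a product splitting would contradict the $\widetilde{\mathbb{S}L_2}$ factor, the Euler class must be nonzero. If $\mathbb{X}=\mathbb{H}^2\times\mathbb{E}^2$, then the conjugation action on $\mathbb{Z}^2$ extends to an isometric action on the Euclidean factor, and an element of $SL(2,\mathbb{Z})$ acts by Euclidean isometries on $\mathbb{R}^2$ only if it has finite order; the monodromy image is therefore finite, hence cyclic, since every finite subgroup of $SL(2,\mathbb{Z})$ is cyclic of order $1,2,3,4$, or $6$. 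This is exactly the ``powers of a common periodic matrix'' condition, and the zero-Euler-number trivial-monodromy subcase is then forced by the existence of a global flat section on a finite cover.

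The hard part will be the careful bookkeeping of Seifert invariants under finite covers: verifying that the rational Euler number transforms correctly, and that the geometric splitting built on a cover descends to the original $M$ rather than merely to a finite quotient of it. A related subtlety is controlling the exceptional fibres $(m_i,a_i,b_i)$ and the obstruction $(a',b')$ within the sufficiency argument, since the common-periodic-matrix hypothesis constrains how these can combine after a cover that kills the monodromy. Once those algebraic reductions are settled, the geometric identification of the two cases is a direct consequence of the structure of lattices in the isometry groups of $\mathbb{H}^2\times\mathbb{E}^2$ and $\widetilde{\mathbb{S}L_2}\times\mathbb{E}$.
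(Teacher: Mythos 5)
A preliminary remark: the paper itself contains no proof of this statement — it is quoted verbatim as Theorem B of Ue's paper [Ue:1991], with the group-theoretic counterparts (Hillman's Theorems 9.5 and 9.6) quoted immediately afterwards — so your proposal has to stand on its own, and as written it has two genuine gaps at exactly the places where the real work lies.

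First, in the sufficiency direction for case (1) with a nontrivial periodic matrix $A$, after passing to the cover that kills the monodromy you simply assert that the resulting principal $T^2$-bundle has rational Euler number zero. This is the crux of that case, since condition (1) imposes no Euler-number hypothesis when the monodromy is nontrivial, and nothing in your reduction explains why the obstruction cannot survive to the cover. The correct reason is cohomological: the extension class lives in $H^2\bigl(\pi_1^{orb}(B);\mathbb{Q}^2_\alpha\bigr)$ with coefficients twisted by the monodromy $\alpha$, and for an orientable hyperbolic base this group is detected by the coinvariants $(\mathbb{Q}^2)_{\mathrm{im}\,\alpha}$, which vanish because a nontrivial finite-order element of $SL(2,\mathbb{Z})$ has no eigenvalue $1$ (its eigenvalues are primitive roots of unity of order $2,3,4$ or $6$); the class on the cover is the restriction of this vanishing class. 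Without this (or an equivalent) computation, the step ``with rational Euler number zero'' is an assertion, not a deduction.

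Second, both sufficiency cases end with a geometric structure on a finite cover together with the phrase that it ``descends to $M$ after choosing a flat metric on the $T^2$ factor invariant under the finite deck group.'' That is precisely the hard implication (virtually geometric does not formally imply geometric), and averaging a flat metric on the torus factor does not resolve it: the deck group need not preserve any product structure, and one must make it act isometrically on the $\mathbb{H}^2$ factor and affinely on the fibres simultaneously and compatibly. The proofs of Ue and Hillman avoid this by realizing $\pi_1(M)$ itself as a lattice in $Isom(\mathbb{H}^2\times\mathbb{E}^2)$ or $Isom(\widetilde{\mathbb{S}L_2}\times\mathbb{E})$: conjugate the finite cyclic monodromy into $SO(2)$, push the extension $1\to\mathbb{Z}^2\to\pi_1(M)\to\pi_1^{orb}(B)\to 1$ out along $\mathbb{Z}^2\hookrightarrow\mathbb{R}^2$, and use vanishing (resp.\ non-vanishing) of the rational Euler class to split (resp.\ control) it. You explicitly flag this descent as ``the hard part'' but leave it unaddressed, so as written your argument only shows $M$ is finitely covered by a geometric manifold. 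The necessity direction has a related weakness: for $\widetilde{\mathbb{S}L_2}\times\mathbb{E}$ the claim that the centre of $\pi_1(M)$ has rank $2$ needs an analysis of the components of the isometry group together with orientability of $M$ and $B$, and the inference ``a product splitting would contradict the $\widetilde{\mathbb{S}L_2}$ factor'' presupposes that a closed $4$-manifold cannot carry both geometries — which is exactly the dichotomy $e^{\mathbb{Q}}(\pi)=0$ versus $e^{\mathbb{Q}}(\pi)\neq 0$ of Hillman's Theorems 9.5 and 9.6; you should either invoke those theorems (at which point much of the statement follows directly from them, plus the fact you correctly use that finite subgroups of $SL(2,\mathbb{Z})$ are cyclic) or supply an independent uniqueness argument, since otherwise this step is circular.
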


On the other hand, the condition of $M$ being geometric could also be rephrased from the view of groups by Hillman, and we use the definition as follows \cite{Hi:2002}.

\begin{theorem} \cite[Theorem 9.2]{Hi:2002}
	Let $M$ be a smooth closed 4-manifold with fundamental group $\pi$. Then $M$ is aspherical with $\sqrt{\pi}\cong \mathbb{Z}^2$ if $h(\sqrt{\pi})=2$, $[\pi:\sqrt{\pi}]=\infty$ and $\chi(M)=0$.
\end{theorem}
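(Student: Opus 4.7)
The plan proceeds in three stages: identify the structure of $\sqrt{\pi}$, prove asphericity of $M$, and then bootstrap to the stated isomorphism.

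\textbf{Stage 1: $\sqrt{\pi}$ is virtually $\mathbb{Z}^{2}$.} Since $h(\sqrt{\pi})=2$ is defined, $\sqrt{\pi}$ is virtually polycyclic, and in particular finitely generated. Combined with local nilpotence, this makes $\sqrt{\pi}$ a finitely generated nilpotent group of Hirsch length $2$. Its torsion subgroup $T(\sqrt{\pi})$ is then a finite characteristic subgroup, and the quotient $\sqrt{\pi}/T(\sqrt{\pi})$ is a torsion-free finitely generated nilpotent group of Hirsch length $2$; its Mal'cev completion is the abelian Lie group $\mathbb{R}^{2}$, so $\sqrt{\pi}/T(\sqrt{\pi})\cong\mathbb{Z}^{2}$. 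In particular $\sqrt{\pi}$ is amenable, and has a torsion-free finite-index subgroup $A\cong\mathbb{Z}^{2}$.

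\textbf{Stage 2: asphericity of $M$.} Replacing $\pi$ by a finite-index normal subgroup $\pi_{0}$ that contains $A$ and passing to the corresponding finite cover $M_{0}\to M$, we still have $\chi(M_{0})=[\pi:\pi_{0}]\cdot\chi(M)=0$. The hypothesis $[\pi:\sqrt{\pi}]=\infty$ guarantees that $A$ is an infinite, finitely generated, amenable normal subgroup of $\pi_{0}$ of infinite index. I would then invoke the general $PD_{4}$-complex asphericity criterion from Hillman's book (Chapter 3): a $PD_{4}$-complex whose fundamental group contains an infinite amenable normal subgroup of infinite index and whose Euler characteristic vanishes is aspherical. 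The proof of that criterion combines Poincaré duality for $\pi$, LHS spectral-sequence analysis for the extension $1\to A\to\pi_{0}\to\pi_{0}/A\to 1$, and the vanishing of $L^{2}$-Betti numbers for groups with an infinite amenable normal subgroup (Cheeger--Gromov) together with the $L^{2}$-index formula $\chi(M_{0})=\sum(-1)^{i}b_{i}^{(2)}(\widetilde{M_{0}})$. Asphericity of $M_{0}$ then descends to $M$.

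\textbf{Stage 3: $\sqrt{\pi}\cong\mathbb{Z}^{2}$.} Once $M$ is aspherical, its universal cover $\widetilde{M}$ is contractible, so any finite subgroup of $\pi$ would act freely on a contractible CW-complex, forcing it to be trivial by Smith theory (equivalently, a $PD_{n}$-group over $\mathbb{Z}$ is torsion-free). Hence the finite characteristic subgroup $T(\sqrt{\pi})$ is trivial, and $\sqrt{\pi}\cong\mathbb{Z}^{2}$. The main obstacle is the $PD_{4}$-asphericity criterion used in Stage 2: it is the only deep ingredient and would be cited as a black box from Hillman's book, while the group-theoretic and covering-space manipulations of Stages 1 and 3 are essentially bookkeeping.
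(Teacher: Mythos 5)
First, a point of reference: the paper does not prove this statement at all --- it is quoted verbatim from Hillman's book \cite[Theorem 9.2]{Hi:2002} --- so your attempt has to be measured against Hillman's argument, not against anything in this paper.

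The genuine gap is the black box you invoke in Stage 2. The criterion ``a $PD_4$-complex whose fundamental group contains an infinite (even finitely generated) amenable normal subgroup of infinite index and whose Euler characteristic vanishes is aspherical'' is false, and no such statement appears in Hillman's Chapter 3. Take $M=S^2\times T^2$: then $\pi=\mathbb{Z}^2$, the subgroup $A=\mathbb{Z}\times\{0\}$ is infinite, finitely generated, abelian, normal and of infinite index, $\chi(M)=0$, yet $\pi_2(M)\cong\mathbb{Z}$, so $M$ is not aspherical. What makes the theorem true is precisely the hypothesis $h(\sqrt{\pi})=2$, i.e.\ that the amenable normal subgroup has Hirsch length $2$, and the real work --- which your sketch hides inside the false citation --- is to use this to kill $H^s(\pi;\mathbb{Z}[\pi])$ for $s\le 2$. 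With $A\cong\mathbb{Z}^2$ normal of infinite index, the LHS spectral sequence gives $H^2(\pi;\mathbb{Z}[\pi])\cong H^0(\pi/A;H^2(A;\mathbb{Z}[\pi]))\cong(\mathbb{Z}[\pi/A])^{\pi/A}=0$ because $\pi/A$ is infinite; only then does the Cheeger--Gromov/L\"uck vanishing of $L^2$-Betti numbers together with $\chi(M)=0$ (forcing $\beta_2^{(2)}=0$) yield $\pi_2(M)=0$, and Poincar\'e duality of the universal cover finishes asphericity. In the $S^2\times T^2$ example it is exactly $H^2(\pi;\mathbb{Z}[\pi])\cong\mathbb{Z}\neq 0$ that obstructs, which shows this cohomological step cannot be dispensed with. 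So Stage 2 needs to be rebuilt around the correct criterion (Hillman's Theorem 3.5 and its corollaries, whose hypotheses include this cohomological vanishing or the rank-two normal subgroup), not the one you state.

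A secondary issue sits in Stage 1. Your inference ``$h(\sqrt{\pi})=2$ is defined, hence $\sqrt{\pi}$ is virtually polycyclic, in particular finitely generated'' only works because this paper defines Hirsch length solely for virtually polycyclic groups; in Hillman's theorem $\sqrt{\pi}$ is merely the maximal locally nilpotent normal subgroup and $h$ is the extended Hirsch length for elementary amenable groups, so groups such as $\mathbb{Z}[1/2]\times\mathbb{Z}$ (locally nilpotent, $h=2$, not finitely generated) are not excluded a priori. Part of the content of the theorem, and of Hillman's proof, is to rule these out and conclude $\sqrt{\pi}\cong\mathbb{Z}^2$; your Stages 1 and 3 assume the finite generation from the start, so even after repairing Stage 2 the argument proves a weaker statement than the one cited.
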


\begin{corollary}\label{cor:hyperbolic}\cite[Corollary 9.2.1]{Hi:2002}
	Let $M$ be a smooth closed 4-manifold with fundamental group $\pi$, then $M$ is homotopy equivalent to a Seifert fibred space with general fibre $T^2$ or Klein bottle over a hyperbolic 2-orbifold if and only if $h(\sqrt{\pi})=2$, $[\pi:\sqrt{\pi}]=\infty$ and $\chi(M)=0$.
\end{corollary}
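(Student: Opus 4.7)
The plan is to deduce the corollary from Theorem 9.2 together with an analysis of the short exact sequence supplied by the nilradical. For the forward direction, suppose $M$ is homotopy equivalent to a Seifert fibred space over a hyperbolic $2$-orbifold $B$ with general fibre $F\in\{T^2, K\}$. The Seifert structure yields an exact sequence
\[1\to \pi_1(F)\to\pi\to\pi_1^{orb}(B)\to 1,\]
in which $\pi_1(F)$ contains a characteristic copy of $\mathbb{Z}^2$ (itself in the $T^2$-case, the orientation subgroup in the Klein bottle case), giving a normal nilpotent $\mathbb{Z}^2$ inside $\pi$. Because $\pi_1^{orb}(B)$ is virtually a closed hyperbolic surface group, it has no non-trivial nilpotent normal subgroup, forcing $\sqrt{\pi}\cong\mathbb{Z}^2$; in particular $h(\sqrt{\pi})=2$ and $[\pi:\sqrt{\pi}]=|\pi_1^{orb}(B)|=\infty$. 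Multiplicativity of the Euler characteristic for Seifert fibrations with $\chi(F)=0$ then gives $\chi(M)=0$.

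For the converse, the three hypotheses together with Theorem 9.2 immediately yield that $M$ is aspherical with $\sqrt{\pi}\cong\mathbb{Z}^2$. Since $\sqrt{\pi}$ is characteristic in $\pi$ it is normal, producing the extension
\[1\to\mathbb{Z}^2\to\pi\to Q\to 1,\]
with $Q:=\pi/\sqrt{\pi}$ infinite. Asphericity makes $\pi$ a $PD_4$-group, so by the Bieri--Eckmann theory of Poincar\'{e} duality groups applied to a normal $PD_2$-subgroup of infinite index in a $PD_4$-group, the quotient $Q$ is virtually a $PD_2$-group, hence the orbifold fundamental group of some closed $2$-orbifold $B$. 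Maximality of the nilradical rules out $Q$ being virtually abelian, for otherwise the preimage of a finite-index abelian subgroup would be a nilpotent normal subgroup of $\pi$ strictly larger than $\mathbb{Z}^2$; therefore $B$ must be hyperbolic.

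The remaining step, which I expect to be the main obstacle, is the realisation: promoting this algebraic extension to an honest Seifert fibration of $M$. I would first build a model Seifert $4$-manifold $M_0$ over $B$ with general fibre $T^2$ or $K$ realising the given extension (which is standard, as extensions of $2$-orbifold groups by $\mathbb{Z}^2$ are classified by Seifert-type invariants). Both $M$ and $M_0$ are aspherical with isomorphic fundamental groups, so the isomorphism on $\pi_1$ is induced by a homotopy equivalence $M\simeq M_0$ by the usual $K(\pi,1)$-argument, which is the conclusion sought. Ue's rigidity theorem cited in Section~3 can be invoked to upgrade this to a fibre-preserving diffeomorphism whenever geometric information about the fibration is required.
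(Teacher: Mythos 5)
A preliminary remark: the paper itself does not prove this statement; it is quoted directly from Hillman \cite[Corollary 9.2.1]{Hi:2002}, so your proposal can only be measured against the standard argument in Hillman's book. Your skeleton does match that argument (forward direction by locating the nilradical inside the Seifert extension; converse via Theorem 9.2, identification of $Q=\pi/\sqrt{\pi}$, exclusion of the non-hyperbolic bases, realization by a model Seifert manifold, and asphericity to get the homotopy equivalence), and the forward direction as you wrote it is fine. But two steps in the converse are genuinely gapped.

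First, the assertion that ``by the Bieri--Eckmann theory of Poincar\'e duality groups\dots the quotient $Q$ is virtually a $PD_2$-group, hence the orbifold fundamental group of some closed $2$-orbifold'' cannot be had at that price. Bieri--Eckmann-type quotient theorems need finiteness hypotheses on $Q$ (type $FP$, finite cohomological dimension, in particular torsion-freeness) which are not available here; $Q$ typically has torsion. What the hypotheses actually yield, via the LHS spectral sequence for $1\to\mathbb{Z}^2\to\pi\to Q\to 1$, is that $Q$ is finitely presented with $H^{s}(Q;\mathbb{Z}[Q])\cong H^{s+2}(\pi;\mathbb{Z}[\pi])$, so $H^{2}(Q;\mathbb{Z}[Q])\cong\mathbb{Z}$; the conclusion that such a group is virtually a closed surface group is Bowditch's theorem (with Eckmann--M\"uller--Linnell in the torsion-free case), and this deep input is exactly what Hillman's treatment rests on --- it is not a routine duality argument. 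Moreover ``virtually a surface group'' is not yet ``$2$-orbifold group'': $Q$ may contain a nontrivial finite normal subgroup, and this is precisely the situation in which the general fibre must be taken to be the Klein bottle rather than $T^2$; your sketch asserts $Q=\pi_1^{orb}(B)$ and so glosses over the case the statement explicitly allows. Second, your exclusion of a flat base is wrong as stated: the preimage in $\pi$ of a finite-index abelian subgroup of $Q$ is an extension of $\mathbb{Z}^2$ by a free abelian group and need not be nilpotent (with hyperbolic monodromy it is merely polycyclic). The correct argument is that abelian subgroups of $\operatorname{Aut}(\sqrt{\pi})\cong GL(2,\mathbb{Z})$ are virtually cyclic (the group is virtually free), so if $Q$ were virtually $\mathbb{Z}^2$ then a rank $\geq 1$ subgroup of its translation subgroup would act trivially on $\sqrt{\pi}$, and its preimage would be a nilpotent normal subgroup of Hirsch length $\geq 3$, contradicting $h(\sqrt{\pi})=2$. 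With these two repairs, and with the realization step referred to Hillman's Theorem 7.3 / Ue's construction as you indicate, the outline does go through.
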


Let $C_{\pi}(\sqrt{\pi})$ denote the centre of $\sqrt{\pi}$ in $\pi$, and there are following theorems.

\begin{theorem}\label{thm:h2*e2}\cite[Theorem 9.5]{Hi:2002}
	Let M be a smooth closed 4-manifold with fundamental group $\pi$, then the following conditions are equivalent:
	
	(1) $M$ is homotopy equivalent to a $\mathbb{H}^2 \times \mathbb{E}^2$-manifold;
	
	(2) $\sqrt{\pi}\cong \mathbb{Z}^2$, $[\pi:\sqrt{\pi}]=\infty$, $[\pi:C_{\pi}(\sqrt{\pi})]<\infty$ , $e^{\mathbb{Q}}(\pi)=0$ and $\chi(M)=0$.
\end{theorem}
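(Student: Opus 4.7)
The plan is to establish the two implications separately, using Corollary \ref{cor:hyperbolic} together with Ue's classification of geometric Seifert 4-manifolds over hyperbolic bases (Theorem B of \cite{Ue:1991}) as the backbone.

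For the direction $(1)\Rightarrow(2)$: a closed orientable $\mathbb{H}^2\times\mathbb{E}^2$-manifold is finitely covered by a Riemannian product $\Sigma\times T^2$ with $\Sigma$ a closed hyperbolic surface, so $\pi$ contains a finite-index subgroup isomorphic to $\pi_1(\Sigma)\times\mathbb{Z}^2$. The $\mathbb{Z}^2$-factor is a normal nilpotent subgroup; since $\pi_1(\Sigma)$ admits no non-trivial nilpotent normal subgroup and nilradicals behave well under commensurability, one concludes $\sqrt{\pi}\cong\mathbb{Z}^2$. Then $\pi/\sqrt{\pi}$ is virtually $\pi_1(\Sigma)$, so $[\pi:\sqrt{\pi}]=\infty$. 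Inside the finite cover the hyperbolic factor acts trivially on the Euclidean factor, so $C_{\pi}(\sqrt{\pi})$ contains this finite-index subgroup, giving $[\pi:C_{\pi}(\sqrt{\pi})]<\infty$. The virtual product structure forces $e^{\mathbb{Q}}(\pi)=0$, and $\chi(M)=0$ follows from $\chi(\Sigma\times T^2)=0$ together with multiplicativity of $\chi$ under finite covers.

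For the direction $(2)\Rightarrow(1)$: the hypotheses $h(\sqrt{\pi})=2$, $[\pi:\sqrt{\pi}]=\infty$ and $\chi(M)=0$ fit Corollary \ref{cor:hyperbolic}, so $M$ is homotopy equivalent to a Seifert fibred 4-manifold over a hyperbolic 2-orbifold $B$. After passing to a canonical double cover if the general fibre is a Klein bottle, I may assume the general fibre is $T^2$, and then the fibre subgroup of $\pi$ coincides with $\sqrt{\pi}$. The conjugation action induces a representation $\pi/\sqrt{\pi}\to GL(2,\mathbb{Z})$ whose image is generated by the Seifert monodromies in the sense of Ue. The assumption $[\pi:C_{\pi}(\sqrt{\pi})]<\infty$ means this image is finite; any finite subgroup of $GL(2,\mathbb{Z})$ consists of periodic matrices, and one checks that the monodromies can be simultaneously conjugated into powers of a single common periodic matrix (or are all trivial). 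Combined with $e^{\mathbb{Q}}(\pi)=0$, this matches case (1) of Ue's Theorem B exactly, so $M$ is homotopy equivalent to a manifold of geometry $\mathbb{H}^2\times\mathbb{E}^2$.

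The main obstacle is the translation step in $(2)\Rightarrow(1)$: deducing from mere finiteness of the monodromy image that the Seifert invariants can be put simultaneously into the ``powers of a common periodic matrix'' form demanded by Ue. This requires a case-by-case analysis of the finite subgroups of $GL(2,\mathbb{Z})$ (the cyclic groups of order $1,2,3,4,6$ and their natural dihedral extensions) together with some control coming from a presentation of the orbifold fundamental group $\pi_1^{orb}(B)$. The $\widetilde{\mathbb{S}L_2}\times\mathbb{E}$ alternative in Ue's theorem is ruled out by the hypothesis $e^{\mathbb{Q}}(\pi)=0$, and the minor gap between homotopy equivalence and a genuine geometric structure is closed by the fibre-preserving diffeomorphism supplied by Ue's uniqueness theorem over hyperbolic bases.
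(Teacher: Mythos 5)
This statement is not proved in the paper at all: it is quoted verbatim from Hillman \cite[Theorem 9.5]{Hi:2002}, so the only comparison point is Hillman's own argument, which constructs from the algebraic data a discrete cocompact embedding of $\pi$ into $Isom(\mathbb{H}^2)\times Isom(\mathbb{E}^2)$ and then uses asphericity of $M$ (Theorem 9.2 of \cite{Hi:2002}) to promote the isomorphism of fundamental groups to a homotopy equivalence. Your direction $(1)\Rightarrow(2)$ is fine in outline (the fact that a closed $\mathbb{H}^2\times\mathbb{E}^2$-manifold is finitely covered by $\Sigma\times T^2$ deserves a citation, but the deductions from it are sound). The direction $(2)\Rightarrow(1)$, however, has two genuine gaps. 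First, the reduction ``pass to a canonical double cover if the general fibre is a Klein bottle'' (and, implicitly, any cover needed to make $M$ or the base orientable so that Ue's results apply) proves at best that a finite cover of $M$ is homotopy equivalent to a geometric manifold; the theorem asserts this for $M$ itself, which has no orientability hypothesis, and you supply no descent argument. Knowing $\widehat{M}\to M$ is a double cover with $\widehat{M}$ geometric does not by itself give a geometric manifold homotopy equivalent to $M$; closing this gap is essentially equivalent to redoing Hillman's lattice construction at the level of $\pi$ rather than of a subgroup.

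Second, the key translation step is false as stated: finiteness of the image of $\pi_1^{orb}(B)\to GL(2,\mathbb{Z})$ does not imply that the monodromies are powers of a common periodic matrix. Finite subgroups of $GL(2,\mathbb{Z})$ need not be cyclic --- for instance the Klein four-group generated by $-I$ and $\mathrm{diag}(1,-1)$, or the dihedral groups of order $8$ and $12$ --- and no conjugation turns a non-cyclic group into a cyclic one. The implication ``finite image $\Rightarrow$ cyclic image'' holds only when the monodromies lie in $SL(2,\mathbb{Z})$, i.e.\ when both the total space and the base orbifold are orientable, which is exactly the setting of the version of Ue's Theorem B quoted in this paper (\cite{Ue:1991}). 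Moreover dihedral monodromy images genuinely occur for $\mathbb{H}^2\times\mathbb{E}^2$-manifolds with non-orientable data, so your proposed ``case-by-case analysis of finite subgroups of $GL(2,\mathbb{Z})$'' cannot terminate in case (1) of the quoted theorem; it would have to invoke Ue's full classification over non-orientable bases and with Klein-bottle fibres, or else bypass Ue entirely as Hillman does, and that is precisely the content that is missing from the proposal.
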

\begin{theorem}\label{thm:sl*e} \cite[Theorem 9.6]{Hi:2002}
	Let $M$ be a smooth closed  4-manifold with fundamental group $\pi$, then the following conditions are equivalent:
	
	(1) $M$ is homotopy equivalent to a $\widetilde{\mathbb{S}L_2}\times \mathbb{E}$-manifold;
	
	(2) $\sqrt{\pi}\cong \mathbb{Z}^2$, $[\pi:\sqrt{\pi}]=\infty$, $[\pi:C_{\pi}(\sqrt{\pi})]<\infty$, $e^{\mathbb{Q}}(\pi)\neq0$ and $\chi(M)=0$.
\end{theorem}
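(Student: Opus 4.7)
The plan is to prove (1) $\Leftrightarrow$ (2) by combining Corollary \ref{cor:hyperbolic} with the Ue geometrization criterion (Theorem B of \cite{Ue:1991}) for Seifert fibred 4-manifolds over hyperbolic bases, both stated in this section. The structure mirrors Theorem \ref{thm:h2*e2} exactly: only the value of $e^{\mathbb{Q}}(\pi)$ distinguishes the $\widetilde{\mathbb{S}L_2}\times\mathbb{E}$ case from the $\mathbb{H}^2\times\mathbb{E}^2$ case, so the two theorems are naturally proved in tandem.

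For (1) $\Rightarrow$ (2), suppose $M$ is homotopy equivalent to a $\widetilde{\mathbb{S}L_2}\times\mathbb{E}$-manifold $N$. The product structure presents $N$ as a Seifert fibration over a closed hyperbolic 2-orbifold $B$ with general fibre $T^2$, so by Ue's criterion the monodromies are trivial and $e^{\mathbb{Q}}(\pi)\neq 0$. The fibre subgroup is $\sqrt{\pi}\cong\mathbb{Z}^2$, generated by the centre of $\widetilde{\mathbb{S}L_2}$ and the $\mathbb{E}$-factor; it has infinite index because $\pi_1^{\mathrm{orb}}(B)$ is infinite; it becomes central in the finite-index subgroup trivializing the orbifold isotropy, which gives $[\pi:C_\pi(\sqrt{\pi})]<\infty$; and $\chi(M)=\chi(T^2)\chi(B)=0$.

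For (2) $\Rightarrow$ (1), Corollary \ref{cor:hyperbolic} shows $M$ is homotopy equivalent to a Seifert fibred 4-manifold $N$ over a hyperbolic 2-orbifold $B$, where the general fibre may be reduced to $T^2$ by passing to the canonical orientable double cover if it is initially a Klein bottle. The condition $[\pi:C_\pi(\sqrt{\pi})]<\infty$ forces the monodromy representation $\pi_1^{\mathrm{orb}}(B)\to GL(\sqrt{\pi})\cong GL(2,\mathbb{Z})$ to have finite image, so a suitable finite orbifold cover has all monodromies trivial. Ue's criterion applied to this cover yields a geometric structure of type $\mathbb{H}^2\times\mathbb{E}^2$ or $\widetilde{\mathbb{S}L_2}\times\mathbb{E}$ according to whether $e^{\mathbb{Q}}$ vanishes; the hypothesis $e^{\mathbb{Q}}(\pi)\neq 0$ selects $\widetilde{\mathbb{S}L_2}\times\mathbb{E}$, and the geometry descends to $N$ by equivariance under the finite deck group.

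The hard part is the descent in the backward direction: Ue's criterion demands trivial monodromies on the nose, whereas the hypothesis supplies only virtually trivial ones. I would treat this by building the explicit finite cover that trivializes the monodromies, verifying Ue's criterion on that cover, and then propagating the geometric structure back down, using that a finite quotient of a $\widetilde{\mathbb{S}L_2}\times\mathbb{E}$-manifold by a group acting through geometry-preserving isometries remains $\widetilde{\mathbb{S}L_2}\times\mathbb{E}$-geometric. A subsidiary point to verify is that $e^{\mathbb{Q}}$ scales by a non-zero rational factor under each of the finite covers used, so that its non-vanishing propagates throughout the argument.
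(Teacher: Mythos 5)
First, a point of comparison: the paper does not prove this statement at all --- it is quoted directly from Hillman \cite[Theorem 9.6]{Hi:2002} --- so your proposal can only be measured against Hillman's argument. Hillman's route is group-theoretic and avoids covers entirely: since $\chi(M)=0$, $h(\sqrt{\pi})=2$ and $[\pi:\sqrt{\pi}]=\infty$ force $M$ to be aspherical (the Theorem 9.2 / Corollary \ref{cor:hyperbolic} package quoted in the same section), it suffices to realize $\pi$ itself as a lattice in $Isom(\widetilde{\mathbb{S}L_2}\times\mathbb{E})$; this is done directly from the data in (2): $\pi/\sqrt{\pi}$ is a hyperbolic $2$-orbifold group acting on $\mathbb{H}^2$, the finite image of the conjugation action on $\sqrt{\pi}\otimes\mathbb{R}$ is conjugated into a compact subgroup, and the class $e^{\mathbb{Q}}(\pi)\neq 0$ then yields the discrete cocompact isometric action, after which asphericity gives the homotopy equivalence. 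There is no descent problem in that approach because the whole group, not a finite-index subgroup, is realized geometrically.

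Your backward direction has a genuine gap exactly at the step you label the hard part. After trivializing the monodromies on a finite cover $N'$ (and arranging an orientable base so that Ue's criterion from \cite{Ue:1991} applies --- note the theorem as quoted requires an orientable base orbifold, which also affects your forward direction), you obtain a geometric structure on $N'$. But the deck transformations of $N'\to N$ are a priori only diffeomorphisms; your descent principle, that a quotient by a group ``acting through geometry-preserving isometries'' is geometric, is a tautology whose hypothesis --- that the deck group can be made isometric for \emph{some} $\widetilde{\mathbb{S}L_2}\times\mathbb{E}$-structure on $N'$ --- is precisely what has to be proved. That requires a rigidity or equivariance statement for these structures (or Ue's uniqueness results), none of which you invoke; recall that even in dimension $3$ the statement ``finitely covered by a geometric manifold implies geometric'' is a deep theorem, not a formal consequence. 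The clean repair is Hillman's: since $M$ is aspherical you only need $\pi$, not a finite-index subgroup, to be a lattice, so the argument should be run on the group extension itself rather than by pushing a structure down from a cover. A smaller loose end: the non-vanishing of $e^{\mathbb{Q}}$ under the covers you use should be justified by the invariance remark in Section 3 (finite-index subgroups containing $\sqrt{\pi}$ with surface quotient), not merely asserted to ``scale by a non-zero rational factor.''
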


Here the \textit{rational euler class} $e^{\mathbb{Q}}(\pi)$ is defined following the definition in Chapter 7 of Hillman's \cite{Hi:2002}. For a 4-dimensional Seifert fibred manifold $M$ over a good or hyperbolic 2-orbifold $B$, its fundamental group  could be represented as a short exact sequence:
\begin{center}
	$1\longrightarrow \mathbb{Z}^2 \longrightarrow \pi_1(M)\longrightarrow\pi_1^{orb}(B)\longrightarrow1$,
\end{center}
where $\pi_1^{orb}(B)$ denotes the orbifold fundamental group of $B$. Consider $\pi=\pi_1(M)$ and $\alpha:\pi_1^{orb}(B)\to Aut(\mathbb{Z}^2)\cong GL_2(\mathbb{Z})$ be the homomorphism induced by conjugation, and $A=\sqrt{\pi}\cong\mathbb{Z}^2$ representing the fibre. Consider the corresponding $\mathbb{Q}[\pi\slash A]$-module $\mathcal{A}=\mathbb{Q}\otimes_{\mathbb{Z}}A$, and $e^{\mathbb{Q}}(\pi)\in H^2(\pi\slash A;\mathcal{A})$
represents the cohomology class corresponding to $\pi$ as an extension of $\pi\slash A$ by $A$. It should be remarked that whether $e^{\mathbb{Q}}(\pi)$ is zero is invariant when pass to a finite index subgroup $\pi'$ containing $A$ such that $\pi'\slash A$ is a surface group. Comparing with the rational euler number defined by Ue in \cite{Ue:1990,Ue:1991}, the rational euler class $e^{\mathbb{Q}}(\pi)$ is well-defined even when all the monodromies are trivial, which provides another tool.

\section{Introduction of profinite completions }

Now we introduce some elementary results of profinite groups based on the book of Ribes and Zalessikii \cite{RiZa:2004}.
\begin{definition}
	For a partially ordered set $I$, an \textit{inverse system} is a family of sets $\{X_i\}_{i\in I}$, and a family of maps $\phi_{ij}:X_i\rightarrow X_j$ whenever $i\leq j$ such that:
	
	(1) $\phi_{ii}=id_{X_i}$,
	
	(2) $\phi_{jk}\phi_{ij}=\phi_{ik}$ if $i\leq j\leq k$.
	
Denoting the inverse system as $(X_i,\phi_{ij},I)$, its \textit{inverse limit} is defined as
\begin{center}
	$\varprojlim X_i=\{(x_i)\in\prod_{i\in I} X_i|\ \phi_{ij}(x_i)=x_j \ if \ i\leq j\}$.
\end{center}

\end{definition}

\begin{definition}
	Given an inverse system $(X_i,\phi_{ij},I)$ such that $X_i$ are finite groups and $\phi_{ij}$ are group homomorphisms, the resulting inverse limit $\varprojlim X_i$ is called a \textit{profinite group}. For a discrete group $G$, the collection $\mathcal{N}$ of its finite index normal subgroups could form an inverse system by declaring that $N_i\leq N_j$ whenever  $N_i,N_j\in\mathcal{N}$ and $N_i\subseteq N_j$. So the group homomorphisms are natural epimorphisms $\phi_{ij}:G/N_i\rightarrow G/N_j$. We call the inverse limit of the inverse system $(G/N_i,\phi_{ij},\mathcal{N})$  the \textit{profinite completion} of $G$ and denote it as $\widehat{G}$.

\end{definition}

There is another important property to introduce:

\begin{definition}
	A group $G$ is \textit{residually finite} if for every non-trivial $g\in G$, there exists some finite index normal subgroup $N\triangleleft G$ such that $g\notin N$.
\end{definition}

A group $G$ is residually finite means that $\bigcap_{N_i\in \mathcal{N}} N_i=1$, and $G$ is residually finite if and only if $\iota:G\rightarrow \widehat{G}$ is injective where $\iota$ is induced by the natural map $G\rightarrow G\slash N_i$ for every $N_i\in\mathcal{N}$.

Considering the discrete topology of the quotients $G/N_i$, the topology on $\widehat{G}$ could be induced as the subtopology of the product topology on $\prod_{N_i\in\mathcal{N}} G/N_i$. Then the profinite completion $\widehat{G}$ is a compact, Hausdorff and totally disconnected group. An open subgroup $H$ of $\widehat{G}$ in the profinite topology means that $H$ is closed of finite index in the sense of topological groups, and $H$ is closed if and only if it is the intersection of all open subgroups of $\widehat{G}$ containing $H$.  On the other hand, when considering the profinite topology on $\widehat{G}$,  the connection between the discrete group and its profinite completion has been made:

\begin{proposition}\label{profinite1-1}\cite{RiZa:2004}
	If $G$ is a finitely generated residually finite group, then there is a one-to-one correspondence between the set $\mathcal{X}$ of subgroups of $G$ with finite index and the set $\mathcal{Y}$ of all open subgroups of $\widehat{G}$. Identifying $G$ with its image in $\widehat{G}$, the correspondence is given by:
	
	(1) for $H\in \mathcal{X}$, $H\longmapsto \overline{H}$,
	
	(2) for $Y\in \mathcal{Y}$, $Y\longmapsto Y\cap G$.
	
	Here $\overline{H}$ means the closure of $H$ under the profinite topology of $\widehat{G}$. Further more, if $K,H\in \mathcal{X}$ and $K\le H$ then $[H:K]=[\overline{H}:\overline{K}]$; $H$ is normal in $G$ if and only if $\overline{H}$ is normal in $\widehat{G}$; and finally, for $H\in \mathcal{X}$, $\overline{H}\cong\widehat{H}$.
\end{proposition}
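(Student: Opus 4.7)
The plan is to exploit two structural facts about $\widehat{G}$: (i) the image of $G$ under $\iota$ is dense in $\widehat{G}$, and (ii) the closures $\overline{N}$ of finite-index normal subgroups $N\triangleleft G$ coincide with the kernels of the canonical projections $p_N:\widehat{G}\twoheadrightarrow G/N$, so they form a neighbourhood basis of $1\in\widehat{G}$ consisting of open normal subgroups of finite index. Density is obtained by unwinding the inverse-limit definition: a basic open neighbourhood of $(g_N)\in\widehat{G}$ is determined by finitely many coordinates in quotients $G/N_1,\ldots,G/N_k$, and any lift to $G$ of the image of $(g_N)$ in $G/(N_1\cap\cdots\cap N_k)$ produces an element of $G$ in that neighbourhood.

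With (i) and (ii) in hand the two maps are well-defined. If $Y\in\mathcal{Y}$ is open, then $Y$ contains some $\overline{N}$, so $Y\cap G\supseteq\overline{N}\cap G=N$ has finite index in $G$. Conversely, for $H\in\mathcal{X}$ the finitely many conjugates of $H$ in $G$ intersect in a finite-index normal subgroup $N_0$ of $G$ contained in $H$, whence $\overline{H}\supseteq\overline{N_0}$ is open. The two assignments are mutually inverse: $\overline{Y\cap G}=Y$ because $Y$ is closed (giving $\supseteq$) and because by density every $y\in Y$ can be approximated by elements of $G$ inside a chosen open neighbourhood of $y$ contained in $Y$; and $\overline{H}\cap G=H$ because any $g\in G\setminus H$ has non-trivial image in $G/N_0$ outside $H/N_0$, so it lies outside $p_{N_0}^{-1}(H/N_0)=\overline{H}$. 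The index formula $[\widehat{G}:\overline{H}]=[G:H]$ then follows since $G/H\to\widehat{G}/\overline{H}$ is injective (from $\overline{H}\cap G=H$) and surjective (each coset $x\overline{H}$ is open, hence meets $G$ by density), and normality transfers both ways by continuity of conjugation together with density.

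The last assertion $\overline{H}\cong\widehat{H}$ is where finite generation plays its role, and I expect it to be the main technical obstacle. For any finite-index normal subgroup $M\triangleleft H$, the estimate $[G:M]=[G:H][H:M]<\infty$ shows that the $G$-core $N_M=\mathrm{core}_G(M)$ is a finite-index normal subgroup of $G$ with $H\cap N_M\subseteq N_M\subseteq M$. Hence the family $\{H\cap N : N\triangleleft G,\ [G:N]<\infty\}$ is cofinal among finite-index normal subgroups of $H$, which identifies $\widehat{H}=\varprojlim_M H/M$ with $\varprojlim_N H/(H\cap N)$. The level-wise injections $H/(H\cap N)\hookrightarrow G/N$ assemble into a continuous injection $\widehat{H}\hookrightarrow\widehat{G}$ whose image is $\varprojlim_N HN/N$; and since $\overline{H}$ is a closed subgroup of $\widehat{G}=\varprojlim_N G/N$ with $p_N(\overline{H})=HN/N$ for every $N$, we conclude $\overline{H}=\varprojlim_N HN/N$, giving the required isomorphism.
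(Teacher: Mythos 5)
The paper itself offers no proof of this proposition --- it is quoted verbatim from Ribes--Zalesskii \cite{RiZa:2004} --- so the only comparison available is with the standard textbook argument, and your proposal is essentially that argument, carried out correctly: density of $G$ in $\widehat{G}$, the identification $\overline{N}=\ker\bigl(p_N:\widehat{G}\to G/N\bigr)$ giving a basis of open normal subgroups, the verification that the two assignments are mutually inverse, the index and normality statements, and the cofinality of $\{H\cap N: N\triangleleft G,\ [G:N]<\infty\}$ among the finite-index normal subgroups of $H$ to get $\overline{H}\cong\widehat{H}$. Two points of polish. First, your closing step, from $p_N(\overline{H})=HN/N$ for every $N$ to $\overline{H}=\varprojlim_N HN/N$, needs one more line: either run a finite-intersection/compactness argument (the sets $\overline{H}\cap p_N^{-1}(x_N)$ are nonempty closed sets with the finite intersection property), or, more simply, observe that the continuous injection $\widehat{H}\to\widehat{G}$ has compact, hence closed, image which contains $H$ and is contained in $\overline{H}$ (image of the closure of $H$ in $\widehat{H}$ lies in the closure of its image), so the image is exactly $\overline{H}$. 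Second, contrary to your remark, finite generation is not what makes the last assertion work: your $G$-core argument uses only $[G:M]<\infty$, and in fact the whole proposition requires only residual finiteness (so that $G$ embeds in $\widehat{G}$); finite generation appears in the statement simply as the paper's standing hypothesis. Also, you prove the index formula only in the form $[\widehat{G}:\overline{H}]=[G:H]$; the stated version $[H:K]=[\overline{H}:\overline{K}]$ for $K\le H$ follows at once by multiplicativity of indices, which is worth saying explicitly.
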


It is clear that $G$ and $\widehat{G}$ have the same set of finite quotients. Let $\mathcal{C} (G)$ denote the collection of finite quotients of $G$, then we have the following theorem proved in \cite{RiZa:2004}:

\begin{theorem}
	Let $G_1$ and $G_2$ be two finitely generated abstract groups, then $\widehat{G_1}\cong \widehat{G_2}$ if and only if $\mathcal{C} (G_1)=\mathcal{C} (G_2)$.
\end{theorem}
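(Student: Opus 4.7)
The plan is to prove the forward direction directly from Proposition~\ref{profinite1-1}, and to reduce the converse to a classical determinacy result for finitely generated profinite groups.

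For the forward implication, suppose $\varphi\colon\widehat{G_1}\xrightarrow{\sim}\widehat{G_2}$ is a topological isomorphism. Proposition~\ref{profinite1-1} identifies the finite-index normal subgroups of $G_i$ with the open normal subgroups of $\widehat{G_i}$ via $N\mapsto\overline{N}$, and the identification preserves quotients: $G_i/N\cong\widehat{G_i}/\overline{N}$. Hence $\mathcal{C}(G_i)$ coincides with the set of continuous finite quotients of $\widehat{G_i}$, and $\varphi$ identifies these sets, so $\mathcal{C}(G_1)=\mathcal{C}(G_2)$.

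For the converse, assume $\mathcal{C}(G_1)=\mathcal{C}(G_2)$. Since each $G_i$ is finitely generated, so is each $\widehat{G_i}$ as a topological group, and by the preceding paragraph the sets of continuous finite quotients of $\widehat{G_1}$ and $\widehat{G_2}$ coincide. I would invoke the theorem---due to Dixon, Formanek, Poland and Ribes---that a finitely generated profinite group is determined up to isomorphism by its set of continuous finite quotients, which then yields $\widehat{G_1}\cong\widehat{G_2}$ directly. The proof of that theorem has two stages. First, one promotes set-level equality of quotient types to equality of surjection counts $|\mathrm{Epi}(G_i,Q)|$ for every finite group $Q$; this step uses that finite generation makes $\mathrm{Hom}(G_i,Q)$ finite (of size at most $|Q|^{r}$ for any generating rank $r$), combined with the identity $|\mathrm{Hom}(G_i,Q)|=\sum_{H\leq Q}|\mathrm{Epi}(G_i,H)|$, M\"obius inversion on the subgroup lattice of $Q$, and induction on $|Q|$. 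Second, with equal counts in hand, one chooses $\mathrm{Aut}(Q)$-equivariant bijections $\mathrm{Epi}(G_1,Q)\leftrightarrow\mathrm{Epi}(G_2,Q)$ compatibly across a cofinal inverse system of finite quotients; the resulting inverse system of non-empty finite choice-sets has non-empty limit by K\"onig's lemma, and any limit point assembles into a coherent family of isomorphisms between the finite quotients of $G_1$ and $G_2$, hence into the desired topological isomorphism $\widehat{G_1}\cong\widehat{G_2}$.

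The principal obstacle is the first stage of the converse: the extraction of surjection multiplicities from the bare isomorphism data $\mathcal{C}(G_i)$. The inductive M\"obius argument is delicate because hom-counts into a group $Q$ mix contributions from surjections onto every subgroup of $Q$, and the finite-generation hypothesis is used in an essential way to keep all relevant counts finite and to drive the induction. Once the counts match, the compactness step is routine, and the two stages together produce the desired isomorphism of profinite completions.
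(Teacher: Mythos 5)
Your forward direction is fine, and your decision to quote the classical determinacy theorem of Dixon--Formanek--Poland--Ribes for the converse is legitimate and is in effect what the paper does: the paper offers no argument of its own and simply cites Ribes--Zalesskii, where this statement is proved. So, viewed as ``cite the standard result,'' your proposal matches the paper. The problem is the proof sketch you attach to that result: its first stage does not work. Knowing only that $\mathcal{C}(G_1)=\mathcal{C}(G_2)$ tells you which epimorphism counts are nonzero, nothing more; the identity $|\mathrm{Hom}(G,Q)|=\sum_{H\leq Q}|\mathrm{Epi}(G,H)|$, M\"obius inversion on the subgroup lattice, and induction on $|Q|$ give no way to conclude $|\mathrm{Hom}(G_1,Q)|=|\mathrm{Hom}(G_2,Q)|$ at the top level, because the unknown quantity $|\mathrm{Epi}(G_i,Q)|$ is exactly the term the induction cannot reach. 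Equality of surjection multiplicities is a \emph{consequence} of the theorem, not an intermediate step extractable from the bare quotient sets by counting.

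The actual argument runs differently. For each $n$ let $M_n(G_i)$ be the intersection of all normal subgroups of $G_i$ of index at most $n$; since $G_i$ is finitely generated there are only finitely many such subgroups, so $G_i/M_n(G_i)$ is finite. Because $G_1/M_n(G_1)$ embeds in a product of quotients of order at most $n$, any epimorphism from $G_2$ onto it has kernel containing $M_n(G_2)$; using $\mathcal{C}(G_1)=\mathcal{C}(G_2)$ in both directions, the finite groups $G_1/M_n(G_1)$ and $G_2/M_n(G_2)$ surject onto one another and hence are isomorphic. The subgroups $M_n$ are intrinsically defined, so the (nonempty, finite) sets of isomorphisms $G_1/M_n(G_1)\to G_2/M_n(G_2)$ form an inverse system under the natural restriction maps; its inverse limit is nonempty, and a compatible family of isomorphisms assembles into $\widehat{G_1}\cong\widehat{G_2}$, since the $M_n$ are cofinal among finite-index normal subgroups. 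This is the compactness step you had in mind, but it rests on the isomorphism $G_1/M_n(G_1)\cong G_2/M_n(G_2)$, not on matching $\mathrm{Aut}(Q)$-orbits of epimorphisms. (Incidentally, once this is done, equality of epi counts follows, since every epimorphism onto a group of order at most $n$ factors through $G_i/M_n(G_i)$.) So either cite the theorem and stop, as the paper does, or replace your first stage with the argument above.
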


The homology and cohomology of one's profinite completion maintain many features from the discrete group. Serre defined goodness in \cite{Se:2013}:

\begin{definition}
	A finitely generated group $G$ is \textit{good} if for any finite $G$-module $A$, the natural homomorphism of cohomology groups $H^{n}(\widehat{G};A) \longrightarrow H^{n}(G;A) $ induced by $\iota :G\longrightarrow \widehat{G}$ is an isomorphism for any dimension $n$.
\end{definition}

There are some examples for illustration. It is shown that the finitely generated Fuchsian groups are good \cite{GJZZ:2008}, and the extension of a good group by another good group is itself good. It is one application of goodness as follows \cite{GJZZ:2008}:

\begin{lemma}\label{extension}
	Let G be a residually finite, good group and $\phi:H\rightarrow G$ be a surjective homomorphism with residually finite and finitely generated kernel $K$. Then $H$ is residually finite.
\end{lemma}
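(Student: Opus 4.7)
The plan is to prove residual finiteness of $H$ by producing, for each non-trivial $h\in H$, a finite-index normal subgroup of $H$ avoiding $h$. The argument splits on whether $\phi(h)$ is trivial in $G$. When $\phi(h)\neq 1$, residual finiteness of $G$ yields $N\triangleleft G$ of finite index with $\phi(h)\notin N$, and then $\phi^{-1}(N)\triangleleft H$ has finite index and does not contain $h$.

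The substantive case is $h\in K$. First I would use residual finiteness of $K$ to choose $M_0\triangleleft K$ of finite index with $h\notin M_0$. Since $K$ is finitely generated, Marshall Hall's theorem implies $K$ has only finitely many subgroups of index $[K:M_0]$, so $M:=\bigcap_{g\in H}gM_0g^{-1}$ is a finite intersection, hence a finite-index subgroup of $K$ invariant under $H$-conjugation; consequently $M\triangleleft H$, and $M\subseteq M_0$ guarantees $h\notin M$. The problem reduces to proving residual finiteness of $H/M$, which fits into a short exact sequence
\[
1\to F\to H/M\to G\to 1\qquad\text{with }F:=K/M\text{ finite,}
\]
for then any finite quotient of $H/M$ separating the nontrivial image of $h$ lifts to a finite quotient of $H$ separating $h$.

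For this sub-problem, I would first treat $F$ abelian. Then $F$ is a $G$-module via the induced conjugation action, and the extension is classified by a class $c\in H^{2}(G;F)$. Goodness of $G$ furnishes an isomorphism $H^{2}(\widehat{G};F)\cong H^{2}(G;F)$, so $c$ lifts to $\widetilde{c}\in H^{2}(\widehat{G};F)$ classifying a profinite extension $1\to F\to \widetilde{E}\to \widehat{G}\to 1$. Naturality of the cohomological classification under the injection $G\hookrightarrow\widehat{G}$ (injective because $G$ is residually finite), combined with the five lemma, produces an injection $H/M\hookrightarrow\widetilde{E}$; since $\widetilde{E}$ is profinite, it is residually finite, and so $H/M$ is residually finite.

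For non-abelian $F$ I would reduce to the abelian case. The kernel $E_{0}$ of the conjugation map $H/M\to \mathrm{Aut}(F)$ is a finite-index normal subgroup of $H/M$ meeting $F$ in $Z(F)$, and it fits in a short exact sequence $1\to Z(F)\to E_{0}\to G_{0}\to 1$ in which $G_{0}$ is a finite-index subgroup of $G$ (hence residually finite and good, since both properties descend to finite-index subgroups) and $Z(F)$ is finite abelian. The abelian case gives residual finiteness of $E_{0}$, and a further Marshall Hall argument — taking the normal core in $H/M$ of a separating subgroup of $E_{0}$ — promotes this to residual finiteness of $H/M$. The main obstacle is the cohomological step in the abelian case: coherently lifting the classifying class of the extension of $G$ by $F$ to one over $\widehat{G}$, and thereby embedding $H/M$ into a profinite group. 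This is precisely where the hypothesis that $G$ is good is indispensable, for without the isomorphism $H^{2}(\widehat{G};F)\cong H^{2}(G;F)$ one has no mechanism to produce such an ambient profinite group.
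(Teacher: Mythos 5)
Your argument is correct and is essentially the standard proof of this statement: the paper itself gives no proof of the lemma (it is quoted from \cite{GJZZ:2008}), and your route --- handling $\phi(h)\neq 1$ via residual finiteness of $G$, reducing the case $h\in K$ to a finite kernel $F=K/M$ by M.~Hall's finiteness of subgroups of a given index in the finitely generated group $K$ together with an $H$-invariant intersection, then using goodness to lift the class of the extension through $H^2(\widehat{G};F)\cong H^2(G;F)$ and embed $H/M$ into a profinite extension of $\widehat{G}$ (injectivity coming from $G\hookrightarrow\widehat{G}$ and the short five lemma), with the centralizer $E_0=C_{H/M}(F)$ reducing nonabelian $F$ to the central abelian case --- is exactly the argument of that reference. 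The only quibble is terminological: the last promotion from the finite-index normal subgroup $E_0$ to $H/M$ is just the normal-core trick (a finite-index subgroup has finitely many conjugates), so no appeal to Marshall Hall's theorem is needed there, and one should note in passing that goodness and residual finiteness do pass to the finite-index subgroup $G_0$, as you assert.
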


Segal proved in \cite[Theorem 1]{Segal} that
\begin{theorem}\label{polycyclic}
	Polycyclic-by-finite groups are residually finite.
\end{theorem}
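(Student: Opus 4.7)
The plan is to establish residual finiteness of a polycyclic-by-finite group $G$ by combining linearity of such groups with the residual finiteness of $GL_n(\mathbb{Z})$: embed $G$ into $GL_n(\mathbb{Z})$, show the latter is residually finite, and transport the property back.

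The first step, and the main input, is the Auslander--Swan embedding theorem: every polycyclic-by-finite group admits a faithful representation into $GL_n(\mathbb{Z})$ for some $n$. I would obtain this by induction on the Hirsch length. Using Mal'cev's theorem that every polycyclic-by-finite group contains a torsion-free polycyclic normal subgroup of finite index, one reduces to the poly-$\mathbb{Z}$ case; such a group is built as an iterated extension by $\mathbb{Z}$, and each step is realised inside an enlarged $GL_n(\mathbb{Z})$ by analysing the conjugation action of $G$ on the rational group algebra of its Fitting subgroup.

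Second, I would verify that $GL_n(\mathbb{Z})$ is residually finite via the congruence subgroups $\Gamma(m)=\ker(GL_n(\mathbb{Z})\to GL_n(\mathbb{Z}/m\mathbb{Z}))$: each $\Gamma(m)$ is of finite index, and any integer matrix distinct from the identity differs from it in some entry, which is then a nonzero integer and hence nonzero modulo some $m$, so $\bigcap_{m\ge 2}\Gamma(m)=\{I\}$. Since residual finiteness descends to subgroups — given $g\ne 1$ in $G$ and a finite-index normal $N\trianglelefteq GL_n(\mathbb{Z})$ with $g\notin N$, the intersection $N\cap G$ is finite-index and normal in $G$ and avoids $g$ — the embedding transports residual finiteness from $GL_n(\mathbb{Z})$ to $G$.

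The main obstacle is the Auslander--Swan step, which is genuinely non-trivial. A fully elementary alternative, following Hirsch, avoids linearity by inducting on the length of a subnormal series with cyclic factors. After reducing to the polycyclic case (residual finiteness is preserved under finite-index supergroups, via the normal-core trick), pick such a series $1=G_0\trianglelefteq\cdots\trianglelefteq G_n=G$ and replace $G_{n-1}$ by its normal core $N$ in $G$; then $N$ is polycyclic with shorter subnormal length, and $G/N$ is virtually cyclic. For $g\notin N$, pull back a finite quotient of the residually finite group $G/N$; for $g\in N$, apply the inductive hypothesis to find a finite-index normal $M\trianglelefteq N$ with $g\notin M$, and use the fact that a finitely generated group has only finitely many subgroups of any given finite index to intersect all subgroups of $N$ of index $[N:M]$, producing a characteristic — hence $G$-normal — finite-index subgroup of $N$ still avoiding $g$.
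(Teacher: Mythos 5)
The paper does not actually prove this statement: it is quoted from Segal's book (Theorem 1 of \cite{Segal}), so any complete standard argument would be acceptable. Your first route is logically sound as far as it goes: the congruence-subgroup argument does show $GL_n(\mathbb{Z})$ is residually finite, and residual finiteness does pass to subgroups exactly as you say. Its weakness is that everything rests on the Auslander--Swan embedding theorem, which you only gesture at and which is substantially deeper than the statement being proved (historically, residual finiteness of polycyclic groups is an ingredient of, not a consequence of, that circle of ideas); as a self-contained proof it reduces the theorem to a harder one, though as a citation-based proof it is on the same footing as the paper's own citation of Segal.

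Your ``fully elementary alternative'' is essentially Hirsch's proof, but as written it stops one step short, and that is a genuine gap. In the case $1\neq g\in N$ you produce $K$, characteristic in $N$, of finite index \emph{in $N$}, normal in $G$, with $g\notin K$. When $G/N$ is infinite cyclic this $K$ has infinite index in $G$, so you have not yet exhibited a finite quotient of $G$ separating $g$; normality in $G$ alone does not finish the argument. What is missing is the observation that $G/K$ is an extension of the finite group $N/K$ by the cyclic group $G/N$ and that such a group is residually finite: lift a generator $t$ of $G/N$ to $G/K$; since $G/K$ modulo the centralizer of $N/K$ embeds in the finite group $\mathrm{Aut}(N/K)$, some power $t^k$ centralizes $N/K$, so $\langle t^k\rangle$ is an infinite cyclic normal subgroup of $G/K$ of finite index meeting $N/K$ trivially, and the finite quotient $(G/K)/\langle t^k\rangle$ still sees the image of $g$. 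Alternatively, strengthen the induction to be on Hirsch length for polycyclic-by-finite groups and apply the inductive hypothesis to $G/K$, whose Hirsch length is strictly smaller whenever $N$ is infinite (the case $N$ finite being the finite-by-cyclic case just described); note that inducting on the length of a cyclic series does not obviously apply to $G/K$. Two minor points: $G_{n-1}$ is already normal in $G_n=G$, so taking its normal core changes nothing, and with your setup the quotient $G/N$ is cyclic, not merely virtually cyclic.
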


There is also the famous theorem of residual finiteness by Malcev \cite{Malcev:1940}:
\begin{theorem}\label{linear}
	A finitely generated linear group is residually finite.
\end{theorem}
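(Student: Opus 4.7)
The plan is to follow Malcev's classical argument, reducing the question to a statement in commutative algebra. Let $G \leq GL_n(K)$ be a finitely generated linear group over a field $K$, generated by matrices $g_1,\dots,g_k$, and let $R \subseteq K$ be the subring generated by $\mathbb{Z}$, all entries of the $g_i^{\pm 1}$, and the elements $\det(g_i)^{-1}$. Then $R$ is a finitely generated commutative $\mathbb{Z}$-algebra and $G$ sits inside $GL_n(R)$.

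Next, given any nontrivial $g \in G$, I would locate an entry of the matrix $g - I$ that is a nonzero element of $R$; call it $a$. The goal is to produce a finite group $Q$ and a homomorphism $\rho : G \to Q$ with $\rho(g) \neq 1$. The natural candidate is to reduce modulo a suitable maximal ideal $\mathfrak{m}$ of $R$ avoiding $a$, i.e.\ with $a \notin \mathfrak{m}$, and take $\rho$ to be the restriction to $G$ of the induced map $GL_n(R) \to GL_n(R/\mathfrak{m})$.

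For this to yield a finite quotient I need two classical facts from commutative algebra applied to the finitely generated $\mathbb{Z}$-algebra $R$: first, that $R$ is a Jacobson ring, so the nilradical (and, in particular, any element that is not nilpotent) is cut out by maximal ideals, giving an $\mathfrak{m}$ with $a \notin \mathfrak{m}$; and second, that for every maximal ideal $\mathfrak{m}$ of a finitely generated $\mathbb{Z}$-algebra, the residue field $R/\mathfrak{m}$ is finite (a form of the Nullstellensatz over $\mathbb{Z}$). If $a$ happens to be nilpotent, the issue is mild: one can replace $a$ by the suitable entry of $g^m - I$ where $g$ has an entry that is not a root of the zero ideal, or argue directly from $g \neq I$ that some entry of $g-I$ is a non–zero–divisor after enlarging to consider all matrix entries; in any case, since $R$ is reduced when we work over the generic point, the nilpotency issue disappears after localization, and one can then choose $\mathfrak{m}$ appropriately.

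Granting those commutative algebra facts, $GL_n(R/\mathfrak{m})$ is a finite group, the kernel $N$ of $\rho$ is a finite-index normal subgroup of $G$, and by construction $g \notin N$ since the chosen entry of $g - I$ does not vanish mod $\mathfrak{m}$. Since $g$ was an arbitrary nontrivial element, this shows $G$ is residually finite. The principal obstacle is simply invoking and, if desired, sketching the Jacobson/finite-residue-field property of finitely generated $\mathbb{Z}$-algebras; the group-theoretic part of the argument is the elementary observation that entrywise reduction is a homomorphism.
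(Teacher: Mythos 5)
Your argument is correct and is exactly the classical Malcev proof; the paper itself gives no proof of this statement, citing Malcev directly, so there is nothing to diverge from. One simplification: since $R$ is a subring of the field $K$, it is an integral domain, so a nonzero entry $a$ of $g-I$ can never be nilpotent and your entire fallback discussion about nilpotent $a$ (which as written is rather vague) is vacuous and can be deleted; the Jacobson property plus finiteness of residue fields of finitely generated $\mathbb{Z}$-algebras then finishes the proof as you say.
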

So we could conclude the residual finiteness  we need in this paper:
\begin{corollary}
	Let $M$ be a closed orientable 4-dimensional manifold with one of Thurston's geometries, then $\pi_1(M)$ is residually finite.
\end{corollary}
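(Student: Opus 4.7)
The plan is to verify residual finiteness case by case across the nineteen Thurston $4$-geometries listed in Table 1, invoking the three available tools: Theorem \ref{linear} (finitely generated linear groups are residually finite), Theorem \ref{polycyclic} (polycyclic-by-finite groups are residually finite), and Lemma \ref{extension} (an extension of a good residually finite group by a finitely generated residually finite group is residually finite).

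First I would dispatch the cases with essentially no work. For the three compact geometries $\mathbb{S}^4$, $\mathbb{C}P^2$, $\mathbb{S}^2\times\mathbb{S}^2$, the group $\pi_1(M)$ is finite. For the six solvable Lie geometries $\mathbb{E}^4$, $\mathbb{N}il^3\times\mathbb{E}$, $\mathbb{N}il^4$, $\mathbb{S}ol^4_{m,n}$, $\mathbb{S}ol^4_0$, $\mathbb{S}ol^4_1$, closed $4$-manifolds are infrasolvable by Theorem $8.1$ of \cite{Hi:2002} and $\pi_1(M)$ is virtually polycyclic of Hirsch length $4$, hence polycyclic-by-finite; for $\mathbb{S}^3\times\mathbb{E}$ and $\mathbb{S}^2\times\mathbb{E}^2$, $\pi_1(M)$ is virtually $\mathbb{Z}$ and virtually $\mathbb{Z}^2$ respectively, also polycyclic-by-finite. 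Theorem \ref{polycyclic} handles all of these at once. The geometry $\mathbb{F}^4$ is excluded since no closed $4$-manifold admits it.

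Next I would observe that whenever the isometry Lie group of the geometry is a linear real algebraic group, $\pi_1(M)$ embeds in it as a finitely generated discrete subgroup, and Theorem \ref{linear} applies directly. This disposes of $\mathbb{H}^4$ (isometry group $O(4,1)$), $\mathbb{H}^2_{\mathbb{C}}$ (isometry group commensurable with $PU(2,1)$), $\mathbb{H}^3\times\mathbb{E}$, $\mathbb{H}^2\times\mathbb{E}^2$, $\mathbb{H}^2\times\mathbb{H}^2$, and $\mathbb{H}^2\times\mathbb{S}^2$, since each of these isometry groups is a product of classical linear Lie groups.

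The main obstacle, and the only case requiring a genuine argument, is the geometry $\widetilde{\mathbb{SL}_2}\times\mathbb{E}$: the universal cover $\widetilde{\mathbb{SL}_2(\mathbb{R})}$ admits no faithful finite-dimensional linear representation, so Theorem \ref{linear} is not directly applicable to $\pi_1(M)$. To resolve this, I would use the Seifert fibred structure on $M$ over a hyperbolic $2$-orbifold $B$ guaranteed by Theorem $2.2(3)$, which yields the short exact sequence
\[
1\longrightarrow \mathbb{Z}^{2}\longrightarrow \pi_1(M)\longrightarrow \pi_1^{orb}(B)\longrightarrow 1.
\]
The base $\pi_1^{orb}(B)$ is a finitely generated Fuchsian group, hence linear (so residually finite by Theorem \ref{linear}) and \emph{good} by \cite{GJZZ:2008}; the kernel $\mathbb{Z}^{2}$ is finitely generated and residually finite. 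Applying Lemma \ref{extension} then yields residual finiteness of $\pi_1(M)$, completing the case analysis over all nineteen geometries.
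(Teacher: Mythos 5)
Your proposal is correct, and it uses the same three tools as the paper (Theorem \ref{polycyclic}, Theorem \ref{linear}, Lemma \ref{extension}), but it distributes the cases differently. The paper invokes Malcev's theorem only for $\mathbb{H}^4$, $\mathbb{H}^2_{\mathbb{C}}$ and \emph{irreducible} $\mathbb{H}^2\times\mathbb{H}^2$-manifolds, and then treats the whole block $\mathbb{H}^3\times\mathbb{E}$, $\widetilde{\mathbb{S}L_2}\times\mathbb{E}$, $\mathbb{H}^2\times\mathbb{E}^2$, $\mathbb{H}^2\times\mathbb{S}^2$ and reducible $\mathbb{H}^2\times\mathbb{H}^2$ uniformly, as groups that are virtually extensions with finitely generated residually finite kernel over a good residually finite quotient, via Lemma \ref{extension}. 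You instead push linearity as far as it will go: for every hyperbolic-factor geometry except $\widetilde{\mathbb{S}L_2}\times\mathbb{E}$ you embed $\pi_1(M)$ as a finitely generated discrete subgroup of a linear isometry group (this is legitimate, since $\mathrm{Isom}(\mathbb{E}^n)$ and $O(3)$ are linear and residual finiteness passes to finite-index overgroups, so the finitely many components of these isometry groups cause no trouble), and you reserve the goodness-based Lemma \ref{extension} for the single geometry where linearity genuinely fails, correctly identifying that $\widetilde{\mathbb{S}L_2(\mathbb{R})}$ has no faithful finite-dimensional representation and then using the Seifert fibration over a hyperbolic $2$-orbifold together with goodness of Fuchsian groups \cite{GJZZ:2008}. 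What each approach buys: yours minimizes the reliance on goodness (needed only once) and handles reducible and irreducible $\mathbb{H}^2\times\mathbb{H}^2$ uniformly, at the price of checking linearity of each isometry group case by case; the paper's route avoids that bookkeeping and treats all the product geometries with a Euclidean or spherical factor in one stroke. You also explicitly cover $\mathbb{S}^3\times\mathbb{E}$ and $\mathbb{S}^2\times\mathbb{E}^2$ (virtually $\mathbb{Z}$ and $\mathbb{Z}^2$, hence polycyclic-by-finite), a case the paper's proof leaves implicit.
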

\begin{proof}
	For a closed 4-manifold $M$ with $\mathbb{H}^2\times\mathbb{H}^2$ geometry, we call $M$  \textit{irreducible} if it can not be covered by the product of two closed orientable surface by the definition in Chapter 14 of Hillman's \cite{Hi:2002}, otherwise $M$ is reducible. If $M$ admits one of geometries $\mathbb{H}_{\mathbb{C}}^2$ and $\mathbb{H}^4$, or $M$ is an irreducible $\mathbb{H}^2\times\mathbb{H}^2$-manifold, then $\pi_1(M)$ is a finitely generated linear group and hence  residually finite by Theorem \ref{linear}. If $M$ admits geometry of solvable Lie type including $\mathbb{E}^{4}$, $\mathbb{N}il^{4}$, $\mathbb{N}il^{3}\times \mathbb{E}$, $ \mathbb{S}ol_{m,n}^{4}$, $\mathbb{S}ol^{4}_{0}$ and $\mathbb{S}ol^{4}_{1}$, then $\pi_1(M)$ is virtually polycyclic, which means that $\pi_1(M)$ is residually finite by Theorem \ref{polycyclic}. Finally if $M$ admits one of geometries $\mathbb{H}^{3}\times \mathbb{E}$, $ \widetilde{\mathbb{S}L_2}\times \mathbb{E}$, $ \mathbb{H}^{2}\times \mathbb{E}^{2}$ and $\mathbb{H}^2\times\mathbb{S}^2$, or $M$ is a reducible $\mathbb{H}^2\times\mathbb{H}^2$-manifold, then $\pi_1(M)$ is virtually an extension of a  finitely generated residually finite group by another residually finite group, leading to the residual finiteness of $\pi_1(M)$ itself by Lemma \ref{extension}. The geometries of compact type are out of consideration for they have compact models. Hence the corresponding fundamental groups are finite, and naturally residually finite.	
\end{proof}

\section{Detecting Seifert 4-manifolds to be geometric by profinite completions}

Firstly we concentrate on the proof of Theorem \ref{thm:geo} to detect whether a 4-dimensional Seifert manifold is geometric by the profinite completion of its fundamental group.

\begin{proof}[\textbf{Proof of Theorem \ref{thm:geo}}]
	
As assumption, both $M$ and $N$ are closed, orientable 4-dimensional Seifert fibred manifolds over hyperbolic 2-orbifolds $B_1$ and $B_2$ with isomorphism  $\widehat{\phi}:\widehat{{\pi_1(M)}}\to\widehat{{\pi_1(N)}}$. The following is to prove that $M$ is geometric if and only if $N$ is geometric.

Comparing Corollary \ref{cor:hyperbolic}, Theorem \ref{thm:h2*e2} and Theorem \ref{thm:sl*e}, we could easily conclude that $M$ is geometric if and only if $[\pi_1(M):C_{\pi_1(M)}(\sqrt{\pi_1(M)})]<\infty$. This condition means that the action of $\alpha:\pi_1^{orb}(B_1)\to Aut(\mathbb{Z}^2)\cong GL_2(\mathbb{Z})$ has finite image in $GL_2(\mathbb{Z})$ \cite[Page 146]{Hi:2002}. Both nilradicals $\sqrt{\pi_1(M)},\ \sqrt{\pi_1(N)}$ are  isomorphic to $\mathbb{Z}^2$, representing the fibres of $M$ and $N$ respectively, and $[\pi_1(M):\sqrt{\pi_1(M)}]=[\pi_1(N):\sqrt{\pi_1(N)}]=\infty$. The core of this proof is to prove that $[\pi_1(M):C_{\pi_1(M)}(\sqrt{\pi_1(M)})]<\infty$ if and only if  $[\widehat{\pi_1(M)}:C_{\widehat{\pi_1(M)}}(\overline{\sqrt{\pi_1(M)}})]<\infty$. We remark that $\overline{\sqrt{\pi_1(M)}}=\widehat{{\sqrt{\pi_1(M)}}}$ by the goodness of $\pi_1(M)$.

 \noindent\textbf{Claim 1:}
	The isomorphism $\widehat{\phi}:\widehat{\pi_1(M)}\to\widehat{\pi_1(N)}$ induces the isomorphism $\overline{\phi}:\overline{\sqrt{\pi_1(M)}}\to\overline{\sqrt{\pi_1(N)}}$.

\noindent\textit{Proof of Claim 1:}
	The nilradicals $\sqrt{\pi_1(M)}$ and $\sqrt{\pi_1(N)}$ are the maximal normal free abelian subgroups of $\pi_1(M)$ and $\pi_1(N)$ respectively. Since $\widehat{\phi}(\sqrt{\pi_1(M)})\cap\pi_1(N)\subset\sqrt{\pi_1(N)}$ and $\widehat{\phi}(\overline{\sqrt{\pi_1(M)}})\subset\overline{\widehat{\phi}(\sqrt{\pi_1(M)})}$, it is obvious that $\widehat{\phi}(\overline{\sqrt{\pi_1(M)}})\subset\overline{\sqrt{\pi_1(N)}}$, hence $\overline{\phi}$ is injective. Suppose $\overline{\phi}$ is not surjective, then $\overline{\sqrt{\pi_1(N)}} \slash\overline{\phi}(\overline{\sqrt{\pi_1(M)}})$ is a finite normal abelian subgroup of $\widehat{\pi_1^{orb}(B_2)}$. While neither $\pi_1^{orb}(B_2)$ nor $\widehat{\pi_1^{orb}(B_2)}$ has any non-trivial finite normal abelian subgroup by Corollary 5.2 of \cite{BCR:2015}, which leads to the isomorphism of $\overline{\phi}$. This ends the proof of Claim 1.

\noindent\textbf{Claim 2:} Let $\pi=\pi_1(M)$, then $[\pi:C_{\pi}({\sqrt{\pi}})]$ is finite if and only if $[\widehat{\pi}:C_{\widehat{\pi}}(\overline{\sqrt{\pi}})]$ is finite.

\noindent\textit{Proof of Claim 2:}
	Suppose $[\pi:C_{\pi}(\sqrt{\pi})]=\infty$, then there would exist infinitely many $x_1,x_2,\ldots\in\pi$ such that $x_i C_\pi(\sqrt{\pi})$ are disjoint cosets of $\pi$. The cosets $x_i C_{\widehat{\pi}}(\overline{\sqrt{\pi}})$ of $\widehat{\pi}$ are disjoint, or else there exists $x_i,x_j\in\pi$ such that $x_i x_j^{-1}\in C_{\widehat{\pi}}(\overline{\sqrt{\pi}})$. However $x_i x_j^{-1}\notin C_{\pi}(\sqrt{\pi})$, which leads to contradiction. If $[\pi:C_{\pi}({\sqrt{\pi}})]<\infty$ then  $H=C_{\pi}(\sqrt{\pi})=C_{H}(\sqrt{\pi})$ is a finite index subgroup of $\pi$. So $\widehat{H}=\overline{H}$ is open in $\widehat{\pi}$. For an element $x\in \widehat{H}$, it is clear that $x$ commutes with all the elements of $\overline{\sqrt{\pi}}$, so $\widehat{H}\subseteq C_{\widehat{\pi}}(\overline{\sqrt{\pi}})$  which means that $C_{\widehat{\pi}}(\overline{\sqrt{\pi}})$ has finite index in $\widehat{\pi}$. This ends the proof of Claim 2.

Claim 2 also holds for the manifold $N$. If $M$ is geometric, the index $C_{\widehat{\pi_1(M)}}(\overline{\sqrt{\pi_1(M)}})$ in $\widehat{{\pi_1(M)}}$ is finite by Claim 2. Then  $C_{\widehat{\pi_1(N)}}(\overline{\sqrt{\pi_1(N)}})$ also has finite index in $\widehat{\pi_1(N)}$ by Claim 1. Since $C_{\widehat{\pi_1(N)}}(\overline{\sqrt{\pi_1(N)}})\cap \pi_1(N)$ is commutable with $\sqrt{\pi_1(N)}$, the index $[\pi_1(N):C_{\pi_1(N)}(\sqrt{\pi_1(N)})]<\infty$. Hence $N$ is geometric. The other direction is similar by the isomorphism $\widehat{\phi}^{-1}$.
\end{proof}

\section{Profinite completions distinguishing the geometries of Seifert manifolds}

To distinguish geometries in the sense of Thurston, we still need to clarify that the nilpotent and solvable properties are invariant under the profinite completions.

\begin{definition}
	A nilpotent group G is of \textit{nilpotent class n} if $n$ is the least integer such that $[\ldots[[x_1,x_2],x_3],\ldots,x_n]=1$ for any $x_1,x_2,\ldots,x_n\in G$.
\end{definition}

The definition is also applied to the profinite case. We just use the definition that a profinite group $\widehat{G}$ is of \textit{nilpotent class n} if $n$ is the least integer satisfying $[\ldots[[a_1,a_2],a_3],\ldots,a_n]=1\in\widehat{G}$ for any $a_1,a_2,\ldots,a_n\in \widehat{G}$. Here $a_i=(x_{ij})_{j\in I}\in\widehat{G}$ such that $x_{ij}\in G\slash N_j$ by the definition of profinite group. There is another definition that a profinite group $\Gamma$ is of \textit{nilpotent class n} if $n$ is the least integer such that $\gamma_n(\Gamma)=1$ where $\gamma_0=\Gamma$ and $\gamma_i (\Gamma)=\overline{[\Gamma,\gamma_{i-1}(\Gamma)]}$ for $i>0$, and the following remarkable theorem shows that these two definitions coincide.

\begin{theorem}\cite{NS:2006}
	Let $G$ be a finitely generated profinite group and $H$ a closed normal subgroup of $G$. Then the subgroup $[H,G]$ generated algebraically by all commutators $[h,g]$ where $h\in H$ and $g\in G$ is closed in $G$.
\end{theorem}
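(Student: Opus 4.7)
The plan is to deduce closedness of $[H,G]$ from a \emph{uniform bound on commutator width}. Specifically, I aim to prove that there exists a constant $N$, depending only on the minimal number $d(G)$ of topological generators of $G$, such that every element of $[H,G]$ can be expressed as a product of at most $N$ commutators $[h_i,g_i]$ with $h_i \in H$ and $g_i \in G$. Once this is shown, the set
\[
C_N = \{[h_1,g_1][h_2,g_2]\cdots[h_N,g_N] : h_i \in H,\ g_i \in G\}
\]
is the continuous image of the compact space $(H \times G)^N$ under the iterated commutator-and-multiplication map, hence compact, hence closed in the Hausdorff group $G$. Combined with the width bound $C_N = [H,G]$, closedness follows immediately.

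First I would reduce the width statement to a question about finite groups via an inverse-limit argument. Writing $G = \varprojlim G/U$ over the directed system of open normal subgroups $U$, it suffices to prove that for each $U$ the image of $[H,G]$ in $G/U$ has commutator width bounded by a constant depending only on $d(G)$, not on $U$. A standard compactness argument on $(H \times G)^N$ then transfers the uniform bound back to $G$. The question thus becomes: for a finite group $\Gamma$ generated by $d$ elements and a normal subgroup $K \trianglelefteq \Gamma$, every element of $[K,\Gamma]$ is a product of $f(d)$ commutators $[k_i,\gamma_i]$.

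Second I would tackle the finite group statement by induction along a chief series of $\Gamma$. For an abelian chief factor the commutator-width contribution is controlled directly by elementary linear algebra over the prime field. The hard case is a non-abelian chief factor, which has the form $T^k$ for some finite simple group $T$ with $\Gamma$ permuting the factors transitively. Here I would invoke the quantitative refinements of the Ore conjecture and the results on commutators in quasisimple groups: every element of a non-abelian finite simple group is a commutator, and more importantly, there are uniform bounds on the commutator width of $[T^k, \Gamma]$ in terms of $d(\Gamma)$ alone, independent of $k$ and of $|T|$. Together with the fact that in a $d$-generated profinite group the number of non-abelian chief factors of each isomorphism type is bounded in terms of $d$, this telescopes into the desired uniform global bound $N = f(d(G))$.

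The main obstacle is precisely this uniform commutator-width control in non-abelian chief factors. It is genuinely deep, relying essentially on the classification of finite simple groups together with refined information about products of commutators in quasisimple groups developed by Liebeck, O'Brien, Shalev and Tiep. In a self-contained write-up I would invoke these results as a black box; the inverse-limit compactness step and the passage from bounded width to closedness are formal by comparison, so the entire weight of the argument rests on the finite-group input.
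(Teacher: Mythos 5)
First, note that the paper does not prove this statement at all: it is quoted verbatim from Nikolov--Segal \cite{NS:2006} and used as a black box, so the only meaningful comparison is with the known proof in the literature. Your overall frame is the right one and is indeed how the theorem is proved there: a uniform bound $N=f(d(G))$ on the number of commutators $[h_i,g_i]$ needed to express any element of $[H,G]$, combined with compactness of the image of $(H\times G)^N$ under the word map and a standard inverse-limit argument, does yield closedness. Those two reductions in your write-up are correct and essentially formal.

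The genuine gap is in the finite-group step, which is the entire content of the theorem. Your plan is induction along a chief series, with a bounded contribution to commutator width from each chief factor, ``telescoping'' to a bound depending only on $d$. This cannot work as stated: a $d$-generated finite group can have arbitrarily many chief factors, abelian and non-abelian alike (for instance, iterated wreath products $A_5\wr A_5\wr\cdots\wr A_5$ are $2$-generated and have as many non-abelian chief factors as there are iterations, all of different isomorphism types $A_5^{k}$), so per-factor bounds simply accumulate and give a width bound depending on the length of the series, i.e.\ on $|\Gamma|$, not on $d$. The auxiliary claim you lean on --- that the number of non-abelian chief factors ``of each isomorphism type'' is bounded in terms of $d$ --- does not rescue this, since the types occurring are unbounded in number and size. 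This accumulation problem is precisely why the Nikolov--Segal argument is not a chief-series induction: their key theorem bounds the number of factors needed to write elements as products of \emph{twisted} commutators with entries drawn from a prescribed (normal) generating set, proved via CFSG-based quantitative results on quasisimple groups with automorphisms; Ore-type statements (every element of a simple group is a commutator) and their refinements are far from sufficient, and the abelian case is likewise not ``elementary linear algebra'' once it must be made uniform over an unbounded series. As it stands, your proposal reduces the theorem to a finite-group statement that is essentially equivalent to the theorem itself and then sketches for it an argument whose assembly step fails.
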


The following proposition should be well-known. The profinite completion maintains the nilpotent class (or solvable length) of one residually finite nilpotent (or solvable) group.

\begin{proposition}\label{prop:nil}
	A residually finite nilpotent group G is of nilpotent class $n$ if and only if $\widehat{G}$ is of nilpotent class $n$.
\end{proposition}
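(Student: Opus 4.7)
The plan is to show equality of nilpotent classes by separately establishing the two inequalities ``class of $\widehat{G}$ is at most class of $G$'' and ``class of $G$ is at most class of $\widehat{G}$'', working throughout with the elementwise definition of nilpotent class recalled in the excerpt. Two structural features of $\widehat{G}$ do all the work: continuity of multiplication and inversion in the profinite topology (so that every word map on $\widehat{G}$ is continuous), together with injectivity of the canonical map $\iota:G\to\widehat{G}$, which holds because $G$ is residually finite, and whose image is dense by the construction of the profinite completion.

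For the first inequality I would consider, for each $k\geq 2$, the iterated commutator map
\[
c_k:\widehat{G}^{k}\longrightarrow\widehat{G},\qquad (a_1,\dots,a_k)\mapsto[\dots[[a_1,a_2],a_3],\dots,a_k].
\]
As a composition of the continuous operations of $\widehat{G}$, the map $c_k$ is continuous. Taking $k$ to be the nilpotent class of $G$, the hypothesis says that $c_k$ vanishes on the subset $G^{k}\subset\widehat{G}^{k}$, which is dense since $\iota(G)$ is dense in $\widehat{G}$. Because $\widehat{G}$ is Hausdorff, the fibre $c_k^{-1}(1)$ is closed and contains a dense set, hence equals all of $\widehat{G}^{k}$; thus $\widehat{G}$ has nilpotent class at most $k$.

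For the reverse inequality, suppose $\widehat{G}$ has nilpotent class $m$, so that $c_m\equiv 1$ on $\widehat{G}^{m}$, and in particular on $\iota(G)^{m}$. Because $\iota$ is a group homomorphism,
\[
\iota\bigl(c_m(x_1,\dots,x_m)\bigr)=c_m\bigl(\iota(x_1),\dots,\iota(x_m)\bigr)=1
\]
for all $x_i\in G$, and injectivity of $\iota$ then forces $c_m(x_1,\dots,x_m)=1$ in $G$, so $G$ has class at most $m$. Combining the two inequalities yields the claimed equality. The argument presents no real obstacle; it is a routine density-plus-continuity argument paired with an injectivity appeal from residual finiteness. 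The one conceptual subtlety worth flagging is that the elementwise definition of nilpotent class used on $\widehat{G}$ agrees with the one via the (topological) descending central series by the theorem of Nikolov--Segal cited immediately before the proposition, so no choice between the two definitions needs to be made.
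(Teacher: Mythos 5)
Your proof is correct. The backward direction (class of $G$ at most class of $\widehat{G}$) is exactly the paper's argument: view $x_1,\dots,x_m\in G$ inside $\widehat{G}$ via $\iota$, note the iterated commutator dies there, and use injectivity of $\iota$ from residual finiteness to kill it in $G$. In the forward direction you take a slightly different route: you treat the iterated commutator as a continuous word map $c_k$ on $\widehat{G}^k$, observe it vanishes on the dense subset $\iota(G)^k$, and conclude by closedness of $c_k^{-1}(1)$ in the Hausdorff group $\widehat{G}$; the paper instead computes coordinatewise in the inverse limit, writing $a_i=(x_{ij})_j$ with $x_{ij}\in G/N_j$ and noting that the $n$-fold commutator projects to the identity in every finite quotient $G/N_j$ because each such quotient inherits nilpotency of class at most $n$ from $G$. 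The two mechanisms are interchangeable here: the coordinatewise argument is more hands-on and uses only that quotients of a class-$n$ group have class at most $n$, while your density-plus-continuity argument is slicker and generalizes immediately to any variety-type identity (any word map vanishing on $G$ vanishes on $\widehat{G}$), at the small cost of invoking the topological group structure explicitly. Your closing remark that the elementwise definition agrees with the one via the closed lower central series by Nikolov--Segal matches the paper's own discussion, so no gap remains on that point.
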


\begin{proof}
	If $G$ is of nilpotent class $n$, it is easy to see that the nilpotent class of $\widehat{G}$ is less than or equal to $n$. Since any $a_1,a_2,\ldots,a_n\in\widehat{G}$ with $a_i=(x_{ij})_{j\in I}$ and $x_{ij}\in G\slash N_j$, the element $[\ldots[[a_1,a_2],a_3],\ldots,a_n]\in\widehat{G}$ projects to the trivial element in each finite quotient of $G$. On the other hand, if $\widehat{G}$ is of nilpotent class $n$, for any $x_1,x_2,\ldots,x_n\in G$, we could see $x_i$ as an element of $\widehat{G}$, which means $[\ldots[[x_1,x_2],x_3],\ldots,x_n]=1\in\widehat{G}$. By the residually finite property of $G$,  $[\ldots[[x_1,x_2],x_3],\ldots,x_n] =1\in G$, which means that the nilpotent class of $G$ is less than or equal to $n$. So $G$ and $\widehat{G}$ has the same nilpotent class number.	
\end{proof}

We define a group $G$ as \textit{virtually nilpotent class} $n$ if $n$ is the least nilpotent class number of all the finite index normal nilpotent subgroups of $G$. It is well-defined in the profinite case since there is a one-to-one correspondence between the  finite index normal subgroups of $G$ and of $\widehat{G}$. The definitions and conclusion are similar for a solvable group with solvable length $n$.

\begin{corollary}
	A residually finite solvable group $G$ is of solvable length $n$ if and only if $\widehat{G}$ is of solvable length $n$.
\end{corollary}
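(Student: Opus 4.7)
The plan is to adapt the argument of Proposition \ref{prop:nil} verbatim, replacing the lower-central iterated commutator by the \emph{derived word} $\delta_n$ defined recursively by $\delta_0(x)=x$ and
\[
\delta_k(x_1,\ldots,x_{2^k}) \;=\; [\,\delta_{k-1}(x_1,\ldots,x_{2^{k-1}}),\ \delta_{k-1}(x_{2^{k-1}+1},\ldots,x_{2^k})\,].
\]
The preliminary step is to recall that for any group $K$ the $n$-th derived subgroup $K^{(n)}$ is the (normal) subgroup generated by the $\delta_n$-values, so that $K^{(n)}=1$ if and only if the universal identity $\delta_n(a_1,\ldots,a_{2^n})=1$ holds for every tuple in $K$. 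This reduces the condition \textquotedblleft solvable length $\leq n$\textquotedblright\ to a single word identity, exactly as the condition \textquotedblleft nilpotent class $\leq n$\textquotedblright\ was reduced in Proposition \ref{prop:nil}.

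For the forward direction, assume $G$ is solvable of length $n$. Then every finite quotient $G/N_j$ also satisfies $\delta_n \equiv 1$. Given any $a_1,\ldots,a_{2^n}\in\widehat{G}$ with coordinates $a_i=(x_{ij})_{j\in I}$, the element $\delta_n(a_1,\ldots,a_{2^n})\in\widehat{G}$ projects to $\delta_n(x_{1j},\ldots,x_{2^n,j})=1\in G/N_j$ in every component, so it is trivial in $\widehat{G}$. Hence the solvable length of $\widehat{G}$ is at most $n$. Conversely, assume $\widehat{G}$ is solvable of length $n$, so $\delta_n\equiv 1$ on $\widehat{G}$. Residual finiteness of $G$ gives an embedding $G\hookrightarrow\widehat{G}$, so $\delta_n(x_1,\ldots,x_{2^n})=1$ in $G$ for every tuple in $G$; thus $G^{(n)}=1$ and $G$ has solvable length at most $n$. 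Combining the two inequalities and using the minimality of $n$ in each direction, $G$ and $\widehat{G}$ have the same solvable length.

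The only step requiring any care is the preliminary fact that $K^{(n)}$ is generated by the $\delta_n$-values; this is routine by induction on $n$, using the identities $[ab,c]=[a,c]^{b}[b,c]$ and $[a,bc]=[a,c][a,b]^{c}$ to express commutators of products of $\delta_{n-1}$-values as products and conjugates of $\delta_n$-values. No appeal to the Nikolov--Segal theorem is needed, because we work with the algebraic derived word on individual tuples rather than with iterated commutator subgroups; hence the argument also avoids any finite-generation hypothesis on $\widehat G$ beyond what is already implicit in writing $\widehat G$ as a profinite completion of $G$.
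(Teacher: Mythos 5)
Your proof is correct and is essentially the argument the paper intends: the corollary is stated there as the direct analogue of Proposition \ref{prop:nil}, and your replacement of the iterated lower-central commutator word by the derived word $\delta_n$, together with projection to finite quotients in one direction and residual finiteness in the other, is exactly that adaptation. The observation that solvable length $\le n$ is the single law $\delta_n\equiv 1$ (so no appeal to Nikolov--Segal is needed) matches the paper's tuple-wise definition of solvable length for profinite groups.
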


We firstly restrict the 4-dimensional geometric manifold to be Seifert fibred, which simplifies the question to only eight geometries.

\begin{proposition}\label{prop:seifert}
	Let $M$ and $N$ be two closed orientable 4-dimensional geometric Seifert fibred manifolds. 
	If $\widehat{\pi_1(M)}\cong\widehat{\pi_1(N)}$, then $M$ and $N$ admit the same geometry.
\end{proposition}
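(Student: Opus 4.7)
By Ue's Theorem 2.1 and the enumeration of Thurston's 4-dimensional geometries, a closed orientable geometric Seifert 4-manifold admits one of eight geometries, partitioned by the type of its base 2-orbifold: $\mathbb{S}^3\times\mathbb{E}$ and $\mathbb{S}^2\times\mathbb{E}^2$ over spherical or bad bases; $\mathbb{E}^4$, $\mathbb{N}il^3\times\mathbb{E}$, $\mathbb{N}il^4$, $\mathbb{S}ol^3\times\mathbb{E}$ over flat bases; and $\mathbb{H}^2\times\mathbb{E}^2$, $\widetilde{\mathbb{S}L_2}\times\mathbb{E}$ over hyperbolic bases. The plan is to show that these eight possibilities are pairwise separated by profinite invariants of $\pi_1$.

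First I would handle the six geometries with non-hyperbolic base, for which $\pi_1$ is virtually polycyclic (equivalently, virtually solvable). By Proposition \ref{prop:nil} and its solvable corollary, combined with the finite-index correspondence of Proposition \ref{profinite1-1}, the virtual solvable length and the virtual nilpotent class are profinite invariants; the $\mathbb{Z}$-rank of the abelianization is also a profinite invariant, since the profinite completion of a finitely generated abelian group recovers its torsion-free rank. These invariants together separate: $\mathbb{S}^3\times\mathbb{E}$ (virtually $\mathbb{Z}$), $\mathbb{S}^2\times\mathbb{E}^2$ (virtually $\mathbb{Z}^2$), $\mathbb{E}^4$ (virtually $\mathbb{Z}^4$), $\mathbb{N}il^3\times\mathbb{E}$ (virtually nilpotent of class $2$), $\mathbb{N}il^4$ (virtually nilpotent of class $3$), and $\mathbb{S}ol^3\times\mathbb{E}$ (virtually solvable of length $2$ but not virtually nilpotent).

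The interesting case is when $\widehat{\pi_1(M)}$ is not virtually solvable: then both $M$ and $N$ have hyperbolic base, and by Theorem \ref{thm:geo}, $N$ is also geometric, so each is either $\mathbb{H}^2\times\mathbb{E}^2$ or $\widetilde{\mathbb{S}L_2}\times\mathbb{E}$. To separate these, I would use the first Betti number, which is a profinite invariant of finitely generated groups. By Claim 1 in the proof of Theorem \ref{thm:geo}, $\widehat{\phi}$ carries $\overline{\sqrt{\pi_1(M)}}$ isomorphically onto $\overline{\sqrt{\pi_1(N)}}$, so descends to $\widehat{\pi_1^{orb}(B_M)}\cong\widehat{\pi_1^{orb}(B_N)}$; by the profinite rigidity of Fuchsian groups among lattices of connected Lie groups \cite{BCR:2015}, $\pi_1^{orb}(B_M)\cong\pi_1^{orb}(B_N)$, so the base orbifolds share a common signature. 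Taking the characteristic finite-index centralizers $\pi'_M=C_{\pi_1(M)}(\sqrt{\pi_1(M)})$ and $\pi'_N=C_{\pi_1(N)}(\sqrt{\pi_1(N)})$ (finite index by Theorems \ref{thm:h2*e2} and \ref{thm:sl*e}, and matched under $\widehat{\phi}$ via Proposition \ref{profinite1-1}), and then refining to a torsion-free finite-index subgroup of the Fuchsian quotient, one obtains central extensions
\[
1\to\mathbb{Z}^2\to\pi'\to\pi_1(\Sigma_h)\to 1
\]
with a common genus $h$ on both sides. The 5-term exact sequence in rational group homology for such a central extension reads
\[
H_2(\Sigma_h;\mathbb{Q})\xrightarrow{\ d\ }\mathbb{Q}^2\to H_1(\pi';\mathbb{Q})\to H_1(\Sigma_h;\mathbb{Q})\to 0,
\]
where $d$ is pairing with the rational euler class $e^{\mathbb{Q}}(\pi')$. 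For $\mathbb{H}^2\times\mathbb{E}^2$, $e^{\mathbb{Q}}=0$ by Theorem \ref{thm:h2*e2}, so $d=0$ and $b_1(\pi'_M)=2h+2$; for $\widetilde{\mathbb{S}L_2}\times\mathbb{E}$, exactly one of the two components of $e^{\mathbb{Q}}$ is non-zero (the $\widetilde{\mathbb{S}L_2}$-direction, with the $\mathbb{E}$-direction splitting off as a direct product) by Theorem \ref{thm:sl*e}, so $d$ has rank $1$ and $b_1(\pi'_N)=2h+1$. This contradicts the profinite invariance of $b_1$.

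The hard part is this final step: the distinction between $\mathbb{H}^2\times\mathbb{E}^2$ and $\widetilde{\mathbb{S}L_2}\times\mathbb{E}$ lies entirely in the rational euler class, an invariant that is not directly visible in the profinite completion of $\pi_1$. The Betti number computation via the 5-term exact sequence is the numerical bridge that converts $e^{\mathbb{Q}}$ into the manifestly profinite-invariant quantity $b_1$, and the reduction to a matched torsion-free refinement relies essentially on the profinite rigidity of Fuchsian groups.
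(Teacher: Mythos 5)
Your handling of the six geometries over spherical, bad and flat bases is the same as the paper's (virtually $\mathbb{Z}$, $\mathbb{Z}^2$, $\mathbb{Z}^4$, nilpotent class $2$, class $3$, solvable-not-nilpotent, via Proposition \ref{prop:nil} and Proposition \ref{profinite1-1}), but for the hyperbolic-base case you take a genuinely different route. The paper characterizes $e^{\mathbb{Q}}=0$ by the splitting of $1\to\mathbb{Z}^2\to\pi_1(M')\to\pi_1(\Sigma)\to1$ on a finite cover and proves that splitting versus non-splitting is detected profinitely, by pushing a non-trivial class of $H^2(\pi_1(\Sigma),\mathbb{Z}^2)$ into $H^2(\pi_1(\Sigma),G)$ for a finite quotient $G$ and using goodness of surface groups; you instead convert the vanishing of $e^{\mathbb{Q}}$ into the numerical invariant $b_1$ of a matched finite-index central extension ($2h+2$ for $\mathbb{H}^2\times\mathbb{E}^2$ versus $2h+1$ for $\widetilde{\mathbb{S}L_2}\times\mathbb{E}$, via the five-term exact sequence) and invoke the profinite invariance of the first Betti number. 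Both work: the paper's argument never needs to identify the base orbifold, while yours buys a very concrete invariant at the price of leaning on the profinite rigidity of Fuchsian groups \cite{BCR:2015} and on a careful matching of subgroups. Two points you should make explicit to close the argument: (i) the claim that the centralizers $C_{\pi}(\sqrt{\pi})$ correspond under $\widehat{\phi}$ needs the identification $C_{\widehat{\pi}}(\overline{\sqrt{\pi}})=\overline{C_{\pi}(\sqrt{\pi})}$, which holds because this centralizer is closed, has finite index by geometricity (essentially Claim 2 in the proof of Theorem \ref{thm:geo}), hence is open and is the closure of its intersection with $\pi$; and (ii) the ``common genus $h$'' must be justified for the torsion-free refinements themselves, e.g.\ by applying Fuchsian rigidity to the refined quotients $\pi''/\sqrt{\pi}$ on both sides (or by combining the equality of indices from Proposition \ref{profinite1-1} with equality of orbifold Euler characteristics of the bases), not merely for the base orbifolds. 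With these details supplied, your proof is correct.
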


\begin{proof}
	
As the assumption above, both $M$ and $N$ admit  $T^2$-bundle structures over closed 2-orbifolds.  The base orbifolds could be spherical, euclidean or hyperbolic. For the spherical case, the geometries are $\mathbb{S}^2\times\mathbb{E}^2$ and $\mathbb{S}^3\times\mathbb{E}$, which are firstly distinguished since the fundamental groups of corresponding manifolds are virtually $\mathbb{Z}^2$ or $\mathbb{Z}$. Then $\widehat{{\pi_1(M)}}$ is virtually $\widehat{ \mathbb{Z}^2}$ or $\widehat{ \mathbb{Z}}$ if and only if $\widehat{\pi_1(N)}$ is virtually $\widehat{ \mathbb{Z}^2}$ or $\widehat{ \mathbb{Z}}$ respectively, which means that $M$ admits geometry $\mathbb{S}^2\times\mathbb{E}^2$ or $\mathbb{S}^3\times\mathbb{E}$ if and only if $N$ admits same geometry respectively.
	
When the orbifold is euclidean, it is clear that $M$ admits $\mathbb{E}^4$-geometry if and only if $\pi_1(M)$ is virtually $\mathbb{Z}^4$. For the geometries $\mathbb{N}il^3\times\mathbb{E}$ and $\mathbb{N}il^4$, the nilradical $\sqrt{\pi_1(M)}$ is maximal nilpotent with finite index and has nilpotent class 2 and 3 respectively by Chapter 7 of \cite{Hi:2002}. The manifold $M$ admits $\mathbb{S}ol^3\times\mathbb{E}$-geometry if and only if $\pi_1(M)$ is virtually solvable but not virtually nilpotent. By Proposition \ref{prop:nil} and Proposition \ref{profinite1-1}, $M$ is of geometry $\mathbb{E}^4$, $\mathbb{N}il^3\times\mathbb{E}$, $\mathbb{N}il^4$ or $\mathbb{S}ol^3\times\mathbb{Z}$ geometry if and only if $\widehat{{\pi_1(M)}}$ is virtually $\widehat{\mathbb{Z}^4}$, virtually nilpotent class 2, virtually nilpotent class 3 or virtually solvable but not virtually nilpotent. By the isomorphism between $\widehat{\pi_1(M)}$ and $\widehat{\pi_1(N)}$, $M$ admits geometry $\mathbb{E}^4$, $\mathbb{N}il^3\times\mathbb{E}$, $\mathbb{N}il^4$ or $\mathbb{S}ol^3\times\mathbb{Z}$ if and only if $N$ admits the same geometry respectively.
	
Now we focus on the situation when the base orbifolds are hyperbolic. By Theorem  \ref{thm:h2*e2} and Theorem \ref{thm:sl*e}, $M$ admits geometry $\mathbb{H}^2\times\mathbb{E}^2$ if and only if $e^{\mathbb{Q}}(\pi_1(M))=0$, and $M$ admits geometry $\widetilde{\mathbb{S}L_2}\times\mathbb{E}$ if and only if $e^{\mathbb{Q}}(\pi_1(M))\neq0$. The condition $e^{\mathbb{Q}}(\pi_1(M))=0$  means that $M$ not only is a Seifert fibred manifold over a hyperbolic base orbifold, but also  admits a hyperbolic orbifold bundle  over $T^2$. So we could know that it is equivalent to that there exists some finite cover $M'$ of $M$ associating to a surface $\Sigma$ such that the short exact sequence
	\begin{align}\label{1}
		1\to\mathbb{Z}^2\to\pi_1(M')\to\pi_1(\Sigma)\to 1
	\end{align}
splits.

\noindent\textbf{Claim:} The short exact sequence (\ref{1}) splits if and only if its profinite completion (\ref{2}) splits.
    \begin{align}\label{2}
		1\to\widehat{\mathbb{Z}^2}\to\widehat{\pi_1(M')}\to\widehat{\pi_1(\Sigma)}\to 1
	\end{align}

\noindent\textit{Proof of Claim:} This claim is similar to the situation in \cite{Wil:2017}. It is clear that if (\ref{1}) splits, then (\ref{2}) naturally splits. Now we suppose (\ref{1}) does not split and consider another non-splitting short exact sequence

	\begin{center}
		$1\to\mathbb{Z}^2\xrightarrow{A}\mathbb{Z}^2\to G\to1$
	\end{center}
where $A$ is an integer matrix and the absolute value of its determinant is larger than 1. Then $G$ is a finite abelian group such that $G\cong\mathbb{Z}\slash a_1\mathbb{Z}\times\mathbb{Z}\slash a_2\mathbb{Z}$ with  $a_1,a_2\in\mathbb{Z}^+$. Since we assume that (\ref{1}) does not split, it induces a long exact sequence in cohomology
		
	\centerline{\xymatrix{\ldots\ar[r]&H^2(\pi_1(\Sigma),\mathbb{Z}^2)\ar[r]&H^2(\pi_1(\Sigma),G)\ar[r]&H^1(\pi_1(\Sigma),\mathbb{Z}^2)\ar[r]&\ldots.}}

The colomology group $H^1(\pi_1(\Sigma),\mathbb{Z}^2)$ is torsion-free, but 	$H^2(\pi_1(\Sigma),G)$ is finite which leads to the surjection of $H^2(\pi_1(\Sigma),\mathbb{Z}^2)\to H^2(\pi_1(\Sigma),G)$. Since (\ref{1}) does not split, there is a corresponding non-trivial element in $H^1(\pi_1(\Sigma),\mathbb{Z}^2)$ which projects to a non-trivial element in $H^2(\pi_1(\Sigma),G)$ for some finite quotient $G$ of $\mathbb{Z}^2$. Then there is a non-splitting short exact sequence $1\to G\to G^{\ast}\to\pi_1(\Sigma)\to 1$ which  corresponds to the element in $H^2(\pi_1(\Sigma),G)$, where $G^{\ast}$ is the extension of $G$ by $\pi_1(\Sigma)$. By the goodness of $\pi_1(\Sigma)$ such that $H^2(\pi_1(\Sigma),G)\cong H^2(\widehat{\pi_1(\Sigma)},G)$, we know that $1\to G\to \widehat{G^{\ast}}\to\widehat{\pi_1(\Sigma)}\to 1$ does not split, so the inverse limit (\ref{2}) does not split. This ends the proof of Claim.

Hence the geometry of $M$ is $\mathbb{H}^2\times\mathbb{E}^2$ if and only if for any finite index normal subgroup of $\widehat{\pi_1(M)}$, the induced short exact sequence splits. It leads to our conclusion that $M$ and $N$ admit the same geometry.
\end{proof}

From Theorem \ref{thm:geo} and Proposition \ref{prop:seifert}, we have the following corollary:

\begin{corollary}
	Let $M$ be a closed 4-dimensional Seifert fibred manifold with fundamental group $\pi_1(M)$. Then whether $M$ is geometric is detected by the profinite completion $\widehat{\pi_1(M)}$. If $M$ is geometric, its geometry type can also be distinguished by $\widehat{\pi_1(M)}$.

\end{corollary}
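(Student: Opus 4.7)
The plan is to combine Theorem \ref{thm:geo} with Proposition \ref{prop:seifert}, after first reducing the question to the case of a hyperbolic base $2$-orbifold. Let $M$ and $N$ be closed orientable Seifert fibred $4$-manifolds with $\widehat{\pi_1(M)} \cong \widehat{\pi_1(N)}$; the goal is to show both that $M$ is geometric if and only if $N$ is, and that they share the same geometry when both are geometric.

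The first step is to observe that the type (spherical, euclidean, or hyperbolic) of the base $2$-orbifold is detected by $\widehat{\pi_1(M)}$. When the base is spherical or euclidean, Proposition \ref{prop:seifert} already shows that the virtual structure of $\widehat{\pi_1(M)}$---virtually $\widehat{\mathbb{Z}}$, $\widehat{\mathbb{Z}}^2$ or $\widehat{\mathbb{Z}^4}$, virtually nilpotent of class $2$ or $3$, or virtually solvable but not virtually nilpotent---singles out the geometry; these virtual properties transfer between $\pi_1(M)$ and $\widehat{\pi_1(M)}$ by Proposition \ref{prop:nil}, its solvable analogue, and Proposition \ref{profinite1-1}. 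When the base is hyperbolic, $\pi_1(M)$ has a Fuchsian quotient $\pi_1^{orb}(B)$ containing non-abelian free subgroups, so $\widehat{\pi_1(M)}$ cannot be virtually polycyclic. Hence the base type is a profinite invariant.

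In the spherical and euclidean cases, Ue's classification recalled in Section~$3$ guarantees that $M$ is always geometric, so geometricity is trivially preserved and Proposition \ref{prop:seifert} already identifies the geometry from $\widehat{\pi_1(M)}$. In the remaining case of hyperbolic bases, Theorem \ref{thm:geo} gives that $M$ is geometric if and only if $N$ is; when both are geometric, a second appeal to Proposition \ref{prop:seifert} identifies their common geometry as either $\mathbb{H}^2 \times \mathbb{E}^2$ or $\widetilde{\mathbb{S}L_2} \times \mathbb{E}$, distinguished by the splitting-of-short-exact-sequence criterion established in the proof of that proposition.

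The step I anticipate being the main obstacle is making the first trichotomy fully rigorous---in particular, excluding the possibility that a hyperbolic-base Seifert $4$-manifold shares its profinite completion with a euclidean-base one. The cleanest route uses Theorem \ref{polycyclic}, which lets one detect virtual polycyclicity profinitely, combined with the goodness of Fuchsian groups (from \cite{GJZZ:2008}) together with residual finiteness to ensure that the non-solvable finite quotients of a hyperbolic-base group persist in $\widehat{\pi_1(M)}$ and obstruct virtual polycyclicity. Once this dichotomy is in hand, the remainder is a direct combination of Theorem \ref{thm:geo} and Proposition \ref{prop:seifert}.
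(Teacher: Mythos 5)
Your proposal is correct and follows essentially the same route as the paper: the paper derives this corollary in one line by combining Theorem \ref{thm:geo} (geometricity is detected when the base is hyperbolic) with Proposition \ref{prop:seifert} (the geometry of a geometric Seifert $4$-manifold is detected). The one place you go beyond the paper's (unwritten) argument is the explicit reduction showing that the type of the base orbifold is itself a profinite invariant, which is exactly the point the paper leaves implicit: Theorem \ref{thm:geo} only applies when \emph{both} manifolds fibre over hyperbolic bases, and Proposition \ref{prop:seifert} only when both are geometric, so one must rule out a non-geometric hyperbolic-base manifold sharing its profinite completion with a (necessarily geometric) spherical- or euclidean-base one; your trichotomy supplies this, and it is a genuine, needed step rather than padding. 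One small correction to how you propose to close that step: Theorem \ref{polycyclic} (Segal) only gives residual finiteness of virtually polycyclic groups and does not by itself ``detect virtual polycyclicity profinitely,'' and goodness of Fuchsian groups is not needed here. The clean argument is the one already available in the paper's toolkit (and used in Proposition \ref{prop:hyp&sol}): if $\pi_1(N)$ is virtually solvable and $\widehat{\pi_1(M)}\cong\widehat{\pi_1(N)}$, then by Proposition \ref{profinite1-1} some finite-index subgroup $H\le\pi_1(M)$ has $\widehat{H}=\overline{H}$ solvable of bounded derived length, whence $H$ itself is solvable since $H$ embeds in $\widehat{H}$ by residual finiteness (the corollary to Proposition \ref{prop:nil}); but any finite-index subgroup of a hyperbolic-base Seifert group surjects onto a hyperbolic $2$-orbifold group containing $F_2$, a contradiction. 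With that substitution your argument is complete.
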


\section{Profinite completions distinguishing the geometries of solvable Lie types}

For a closed  $\mathbb{S}ol^4_1$-manifold $M$, it is homeomorphic to a mapping torus of a $\mathbb{N}il^3$-manifold if and only if $M$ is orientable by Theorem 8.9 of \cite{Hi:2002}. Since $\sqrt{\pi_1(M)}\cong \Gamma_q$ for some $q>1$ as $\Gamma_q$ defined in (\ref{3}), let $\varphi\in Aut(\Gamma_q)$ be an automorphism of $\Gamma_q$, sending $x$ to $x^a y^b z^m$ and $y$ to $x^c y^d z^n$ for $C=\begin{pmatrix}
a&b\\c&d
\end{pmatrix}\in GL(2,\mathbb{Z})$. The matrix $C$ could be seen as the automorphism on $\Gamma_q\slash I(\Gamma_q)\cong\mathbb{Z}^2$, where $I(\Gamma_q)\cong\mathbb{Z}$ denotes the subgroup generated by $z$. So $\varphi$ could be represented as a tuple $(C,\mu)\in GL(2,\mathbb{Z})\times\mathbb{Z}^2$ where $\mu=(m,n)\in\mathbb{Z}^2$. On the other hand, every $\varphi=(C,\mu)$ represents an automorphism of $\Gamma_q$, and $\Gamma_q\rtimes_{\varphi}\mathbb{Z}$ is the fundamental group of a mapping torus of a $\mathbb{N}il^3$-manifold. Such a mapping torus could admit geometry $\mathbb{N}il^3\times \mathbb{E}$, $\mathbb{N}il^4$ or $\mathbb{S}ol^4_1$ by Theorem 8.9 of \cite{Hi:2002}.

 Let $N_A=N_B\cong\mathbb{Z}^3$ and $H\cong\mathbb{Z}$ be free abelian groups, and define the corresponding semi-direct products  $G_A=N_A\rtimes_A H$ and $G_B=N_B\rtimes_B H$, where $A,B\in SL(3,\mathbb{Z})$. We could conclude the condition for such two groups being isomorphic which is similar to Lemma 2.1 of \cite{Nery:2020}.

\begin{lemma}
 Let $A,\ B\in SL(3,\mathbb{Z})$ such that both $A$ and $B$ have at most one eigenvalue equal to 1. If $G_A$ and $G_B$ are isomorphic, then the isomorphism sends $N_A$ to $N_B$. What's more, $G_A$ is isomorphic to $G_B$ if and only if $A$ and $B$ (or $B^{-1}$) are conjugate in $GL(3,\mathbb{Z})$.
\end{lemma}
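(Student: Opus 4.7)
The plan is to split the argument into three steps: first characterize $N_A$ intrinsically inside $G_A$ so that any isomorphism must respect it, then translate the remaining data of the isomorphism into a matrix conjugation, and finally supply an explicit converse construction.

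For the characterization step, I would show that $N_A$ equals the Fitting subgroup of $G_A$ (the largest normal nilpotent subgroup, hence characteristic). The inclusion $N_A \subseteq \mathrm{Fit}(G_A)$ is immediate because $N_A \cong \mathbb{Z}^{3}$ is abelian and normal. For the reverse inclusion, assume $K \leq G_A$ is a normal nilpotent subgroup with $K \not\subseteq N_A$, so $K$ contains some $(v,n)$ with $n \neq 0$. The subgroup $K \cap N_A$ is $A$-invariant, and nilpotency of $K$ forces the conjugation action of $(v,n)$---which equals $A^n$ on $N_A$---to be unipotent on $K \cap N_A$. This would place $K \cap N_A$ inside the generalized $1$-eigenspace of $A^n$, and hence of $A$, but the hypothesis that $A$ has at most one eigenvalue equal to $1$ bounds that eigenspace strictly below rank $3$, eventually contradicting $K \supsetneq N_A$. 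Therefore $N_A$ is characteristic, and any isomorphism $\phi\colon G_A \to G_B$ restricts to an isomorphism $\phi|_{N_A}\colon N_A \to N_B$. This establishes the first assertion of the lemma.

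Next, let $P \in GL(3,\mathbb{Z})$ represent $\phi|_{N_A}$. The induced map $\bar\phi$ on the quotients $G_A/N_A \cong \mathbb{Z} \to G_B/N_B \cong \mathbb{Z}$ is multiplication by some $\epsilon \in \{\pm 1\}$. Pick generators $t_A$ and $t_B$ of the respective $H$-factors and write $\phi(t_A) = t_B^{\epsilon} w$ for some $w \in N_B$. Applying $\phi$ to the defining relation $t_A v t_A^{-1} = Av$ for $v \in N_A$, and using that $N_B$ is abelian so the factor $w$ cancels under conjugation, I obtain $B^{\epsilon} P v = P A v$ for all $v \in N_A$, i.e.\ $A = P^{-1} B^{\epsilon} P$. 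So $A$ is conjugate in $GL(3,\mathbb{Z})$ to either $B$ or $B^{-1}$. Conversely, given $A = P^{-1} B^{\epsilon} P$, the assignment $(v,k) \mapsto (Pv, \epsilon k)$ is readily checked to be an isomorphism $G_A \to G_B$.

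The main obstacle is the characteristic-ness of $N_A$. In degenerate configurations where $A$ has eigenvalues that are roots of unity (permitted on the nose by the hypothesis when at most one of them equals $1$, e.g.\ the triple $(1,-1,-1)$ or $(1,\omega,\bar\omega)$), a proper power $A^n$ can be unipotent and the Fitting subgroup may genuinely exceed $N_A$. The eigenvalue hypothesis is tailored to prevent the most damaging case---a unipotent subspace of dimension $\geq 2$---which is what the Fitting argument requires; in the borderline finite-order cases one has to fall back on an alternative intrinsic characterization such as the preimage of the torsion part of $G_A^{\mathrm{ab}}$, but this subtlety does not intrude on the applications to the solvable $4$-dimensional geometries handled in the paper.
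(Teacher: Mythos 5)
Your argument is correct in substance where the lemma is actually applied, but it takes a genuinely different route from the paper. The paper identifies $N_A$ as the unique maximal abelian normal subgroup of $G_A$ (on the grounds that $\left\langle N_A,t\right\rangle$ is non-abelian for every non-trivial $t\in H$) and then simply cites Grunewald--Zalesskii for the equivalence between $G_A\cong G_B$ and conjugacy of $A$ with $B$ or $B^{-1}$ in $GL(3,\mathbb{Z})$; you instead characterize $N_A$ as the Fitting subgroup and derive the conjugacy criterion by hand, including the explicit converse $(v,k)\mapsto(Pv,\epsilon k)$. Your second step is complete and has the merit of being self-contained where the paper outsources it. In your first step, to turn ``eventually contradicting $K\supsetneq N_A$'' into an actual argument you should use normality of $K$: conjugating $(v,n)\in K$ by $(w,0)$ shows $(I-A^n)\mathbb{Z}^3\subseteq K\cap N_A$, and nilpotency of $\left\langle K\cap N_A,(v,n)\right\rangle$ then forces $I-A^n$ to be nilpotent, i.e.\ $A^n$ unipotent, which is impossible as long as $A^n$ has at most one eigenvalue equal to $1$.

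The caveat you raise about root-of-unity eigenvalues is real, and it is not a defect peculiar to your approach: the paper's own justification fails in exactly the same cases, since for $A=\mathrm{diag}(1,-1,-1)$ (which satisfies the stated hypothesis) the subgroup $\left\langle N_A,t^2\right\rangle\cong\mathbb{Z}^4$ is abelian and normal. In fact the first assertion of the lemma is then false outright: the assignment $e_1\mapsto e_1t^2$, $e_2\mapsto e_2$, $e_3\mapsto e_3$, $t\mapsto e_2t$ respects the defining relations, is surjective, hence is an automorphism of $G_A$ (polycyclic groups are Hopfian), and it does not preserve $N_A$. Consequently no alternative intrinsic characterization, including your proposed fallback via the preimage of the torsion of $G_A^{\mathrm{ab}}$, can rescue the borderline cases; the hypothesis must be strengthened to exclude non-trivial roots of unity among the eigenvalues (equivalently, no non-trivial power of $A$ is unipotent). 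That strengthened hypothesis holds for all the monodromies arising from the $\mathbb{S}ol^4_0$ and $\mathbb{S}ol^4_{m,n}$ geometries, so, as you observe, the applications in the paper are unaffected.
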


\begin{proof}
	
	Let $\phi:G_A\to G_B$ denote the isomorphism. As assumption, neither $A-I$ nor $B-I$ is nilpotent since both $A$ and $B$ contain at most one eigenvalue which is equal to 1. Then both $G_A$ and $G_B$ are solvable but not nilpotent. It should be noted that $N_A$ is the unique maximal abelian normal subgroup of $G_A$ since $\left\langle N_A,t\right\rangle $  is not abelian for any non-trivial $t\in H$, and similarly $N_B$ is the  unique maximal abelian normal subgroup of $G_B$. Hence the image of $N_A$ is a subset of $N_B$. The isomorphism $\phi:G_A\to G_B$ induces the isomorphism between $N_A$ and $N_B$ since $H\cong \mathbb{Z}$ does not have any finite subgroup. The second part is directly from Corollary 2.2 and Proposition 2.5 of \cite{GZ:2011} for $G_A$ and $G_B$ to be isomorphic.
\end{proof}

The similar proof can apply to the profinite completions. We denote $A$ and $B$ as  the semi-direct actions on $\widehat{\mathbb{Z}}^3$  induced by the embedding  $ GL(3,\mathbb{Z})\to GL(3,\widehat{\mathbb{Z}})$. Then we have the following corollary by Corollary 2.2 and Proposition 2.5 of \cite{GZ:2011}.
\begin{corollary}\label{cor:semidirect}
 Let $A,\ B\in SL(3,\mathbb{Z})$ such that both $A$ and $B$ have at most one eigenvalue equal to 1. If  $\widehat{G_A}$ and $\widehat{G_B}$ are isomorphic, then the isomorphism sends $\widehat{N_A}$ to $\widehat{N_B}$. What's more, $\widehat{G_A}$  is isomorphic to $\widehat{G_B}$ if and only if $A$ and $B$ (or $B^{-1}$) are conjugate in $GL(3,\widehat{ \mathbb{Z}})$.
\end{corollary}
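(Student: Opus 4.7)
The plan is to transport the preceding lemma's proof into the profinite category. Because $G_A=N_A\rtimes_A H\cong\mathbb{Z}^3\rtimes_A\mathbb{Z}$ is polycyclic, taking profinite completions respects the semidirect-product decomposition, giving $\widehat{G_A}\cong\widehat{\mathbb{Z}}^3\rtimes_A\widehat{\mathbb{Z}}$, where the action is encoded by the image of $A$ in $SL(3,\widehat{\mathbb{Z}})$ and the rule $A^k$ for $k\in\widehat{\mathbb{Z}}$ is defined by continuity. In this presentation elements are written $(v,k)$ with multiplication $(v_1,k_1)(v_2,k_2)=(v_1+A^{k_1}v_2,k_1+k_2)$.

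The central step, mirroring the intrinsic characterisation of $N_A$ in the discrete lemma, is to prove that $\widehat{N_A}$ is the unique maximal closed abelian normal subgroup of $\widehat{G_A}$. Suppose $M$ is such a subgroup and contains some $t=(v,k)$ with $k\ne 0$ in $\widehat{\mathbb{Z}}$. A direct computation shows that conjugation of $t$ by an arbitrary $(w,0)\in\widehat{N_A}$ produces a new element differing from $t$ by $((I-A^k)w,0)$; normality of $M$ then yields $(I-A^k)\widehat{\mathbb{Z}}^3\subseteq M\cap\widehat{N_A}$. The abelian hypothesis on $M$ simultaneously forces $A^k$ to fix $M\cap\widehat{N_A}$ pointwise. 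Composing these facts, $(I-A^k)^2$ annihilates $\widehat{\mathbb{Z}}^3$, so $A^k-I$ is nilpotent as a matrix over $\widehat{\mathbb{Z}}$. This is the main technical obstacle, since $k$ now ranges over all of $\widehat{\mathbb{Z}}\setminus\{0\}$: the plan is to reduce modulo each prime $p$, where $A^k$ becomes a genuine power of $A\bmod p$ in the finite group $SL(3,\mathbb{F}_p)$, and to use the eigenvalue hypothesis on $A\in SL(3,\mathbb{Z})$ to prevent $A^k-I$ from being nilpotent at any prime simultaneously, forcing $k=0$ and producing a contradiction.

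Granted the uniqueness, any isomorphism $\widehat{\phi}:\widehat{G_A}\to\widehat{G_B}$ must send $\widehat{N_A}$ onto $\widehat{N_B}$, yielding a matrix $P\in GL(3,\widehat{\mathbb{Z}})$ on the abelian factor and inducing an automorphism of $\widehat{\mathbb{Z}}$, namely multiplication by some unit $u\in\widehat{\mathbb{Z}}^\times$. Compatibility with the semidirect-product structure translates into the matrix identity $PAP^{-1}=B^u$. Following the profinite analogues of Corollary~2.2 and Proposition~2.5 of \cite{GZ:2011}, which rest only on the uniqueness of the $\widehat{\mathbb{Z}}$-action up to the involution $k\mapsto -k$, this reduces to $PAP^{-1}=B$ or $PAP^{-1}=B^{-1}$, as required. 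The converse is immediate: from such a $P$ the assignment $(v,k)\mapsto(Pv,\pm k)$ furnishes an explicit isomorphism between $\widehat{G_A}$ and $\widehat{G_B}$, completing the proof.
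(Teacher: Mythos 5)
Your overall route is the same as the paper's, which simply asserts that the discrete argument (uniqueness of the maximal abelian normal subgroup, plus the criterion of Grunewald--Zalesskii \cite{GZ:2011}) carries over to profinite completions; but your write-up has a genuine gap at exactly the point where the profinite case needs new work. The reduction to the claim ``$(A^k-I)^2=0$ with $k\in\widehat{\mathbb{Z}}\setminus\{0\}$ is impossible'' is correct, yet you do not prove this claim: you only announce a plan to reduce modulo each prime $p$ and invoke the eigenvalue hypothesis on $A$ over $\mathbb{Z}$. As described, that plan fails. In $SL(3,\mathbb{F}_p)$ the matrix $A$ has finite order, so for every prime there are many exponents $k$ (any $k$ divisible by the order of $A$ mod $p$) with $A^k\equiv I\pmod p$, and the hypothesis on the rational/complex eigenvalues of $A$ gives no control over such congruences; no contradiction can be extracted from the mod-$p$ reductions alone. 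One must argue integrally in $GL(3,\mathbb{Z}_p)$: for the monodromies at hand the characteristic polynomial of $A$ has distinct roots, so every element of the procyclic closure $\overline{\langle A\rangle}\le GL(3,\mathbb{Z}_p)$ commutes with $A$ and is diagonalizable over $\overline{\mathbb{Q}}_p$; hence $(A^k-I)^2=0$ forces $A^k=I$ in $GL(3,\mathbb{Z}_p)$, and the injectivity of $\mathbb{Z}_p\to\overline{\langle A\rangle}$ on the $p$-Sylow part then gives $k_p=0$, so running over all $p$ yields $k=0$. An argument of this kind is the real content of the corollary and is missing from your proposal.

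Two further remarks. First, an isomorphism $\widehat{G_A}\to\widehat{G_B}$ only gives $PAP^{-1}=B^{u}$ for some unit $u\in\widehat{\mathbb{Z}}^{\times}$, and passing from $B^{u}$ to $B^{\pm1}$ is not automatic; like the paper you delegate this to \cite{GZ:2011}, which is acceptable, but your stated reason (that those results ``rest only on the uniqueness of the $\widehat{\mathbb{Z}}$-action up to $k\mapsto-k$'') is not a justification. Second, the literal hypothesis ``at most one eigenvalue equal to $1$'' does not suffice for the uniqueness step: for $A=\mathrm{diag}(1,-1,-1)$ the subgroup $\widehat{\mathbb{Z}}^3\times2\widehat{\mathbb{Z}}$ is abelian, closed and normal but not contained in $\widehat{N_A}$. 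This defect is shared by the paper's discrete lemma and disappears for the monodromies actually occurring in the $\mathbb{S}ol^4_0$ and $\mathbb{S}ol^4_{m,n}$ cases, so it is not specifically your gap, but any complete proof must use the stronger property that $A^k$ is never $2$-step unipotent for $k\neq0$, rather than the stated eigenvalue condition.
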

We should remark that it is similar to the  profinite flexibility exhibited by Funar \cite{Funar}. Two matrices $A,\ B\in SL(3,\mathbb{Z})$ are conjugate in $GL(3,\widehat{ \mathbb{Z}})$ if and only if $A$ and $B$ project to conjugate matrices in $GL(3,\mathbb{Z}\slash q\mathbb{Z})$ for every integer $q>1$.
Now we can deal with geometries $\mathbb{S}ol^4_0$, $\mathbb{S}ol^4_1$ and $\mathbb{S}ol^4_{m,n}$. For a closed orientable manifold with one of these geometries, its fundamental group is isomorphic to a semi-direct product $G\rtimes\mathbb{Z}$ where $G\cong\mathbb{Z}^3$ or $\Gamma_q$. The profinite completion maintains some features of groups for being nilpotent or solvable as we proved in Proposition \ref{prop:nil}, which we could use to distinguish the geometries of solvable Lie type.

\begin{proposition}\label{prop:sol}
	Let $M$, $N$ be two closed orientable 4-dimensional geometric manifolds, and suppose $M$ and $N$ admit distinct geometries among $\mathbb{S}ol^4_0$, $\mathbb{S}ol^4_{m,n}$ and $\mathbb{S}ol^4_1$. Then $\widehat{\pi_1(M)}\ncong\widehat{\pi_1(N)}$.
\end{proposition}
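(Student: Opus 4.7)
The plan is to split the proof into two comparison cases: first separate $\mathbb{S}ol^4_1$ from the pair $\{\mathbb{S}ol^4_0,\ \mathbb{S}ol^4_{m,n}\}$ using solvable length, and then separate $\mathbb{S}ol^4_0$ from $\mathbb{S}ol^4_{m,n}$ using the characteristic polynomial of the monodromy. The fundamental groups of closed orientable $\mathbb{S}ol^4_1$-manifolds have the form $\Gamma_q\rtimes_{\varphi}\mathbb{Z}$, while those of $\mathbb{S}ol^4_0$- and $\mathbb{S}ol^4_{m,n}$-manifolds have the form $\mathbb{Z}^3\rtimes_B\mathbb{Z}$ with $B\in SL(3,\mathbb{Z})$, so the two cases exhaust all comparisons.

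For the first case I would use that the chain $\pi\rhd\Gamma_q\rhd[\Gamma_q,\Gamma_q]=\langle z\rangle\rhd 1$ forces $\pi^{(3)}=1$ for $\pi=\Gamma_q\rtimes_{\varphi}\mathbb{Z}$, giving derived length at most $3$. The monodromy $\varphi$ acts on $\Gamma_q/\langle z\rangle\cong\mathbb{Z}^2$ via a matrix $C\in GL(2,\mathbb{Z})$ whose eigenvalues are real and distinct from $\pm 1$ (this is what distinguishes $\mathbb{S}ol^4_1$ from the $\mathbb{N}il$-geometries); hence $\det(C-I)\neq 0$, so the commutators $[t,x],[t,y]$ project to a finite-index sublattice of $\mathbb{Z}^2$, and a commutator of two elements of $\pi^{(1)}$ whose projections are linearly independent produces a non-trivial power of $z$ in $\pi^{(2)}$. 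The derived length of $\pi$ is therefore exactly $3$, whereas any $\mathbb{Z}^3\rtimes_B\mathbb{Z}$ has $\pi^{(1)}\subseteq\mathbb{Z}^3$ abelian and so has derived length $2$. Since solvable length is preserved under profinite completion (the analogue of Proposition \ref{prop:nil} for solvable groups, stated as the corollary following it), $\widehat{\pi_1(M)}\ncong\widehat{\pi_1(N)}$ in this case. The subtle step here — the main obstacle of the proof — is verifying $\pi^{(2)}\neq 1$ rather than merely the upper bound $\pi^{(3)}=1$, for which one must exhibit the explicit non-trivial power of $z$ described above.

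For the second case I would invoke Corollary \ref{cor:semidirect}. Its hypothesis that the monodromy has at most one eigenvalue equal to $1$ holds in both settings: for $\mathbb{S}ol^4_{m,n}$ the three real roots of $x^3-mx^2+nx-1$ are distinct, and for $\mathbb{S}ol^4_0$ the non-real complex-conjugate pair cannot equal the real number $1$. Any isomorphism $\widehat{\pi_1(M)}\cong\widehat{\pi_1(N)}$ therefore conjugates the monodromy $A$ of $M$ to $B$ or $B^{-1}$ inside $GL(3,\widehat{\mathbb{Z}})$. Conjugation preserves the characteristic polynomial, so the integer polynomials $\chi_A(x)$ and $\chi_B(x)$ (or $\chi_{B^{-1}}(x)$) coincide in $\mathbb{Z}[x]$. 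But the characteristic polynomial of an $\mathbb{S}ol^4_{m,n}$-monodromy splits over $\mathbb{R}$ into three distinct real factors, and the same is true of its inverse (whose eigenvalues are the reciprocals of real numbers), while the characteristic polynomial of an $\mathbb{S}ol^4_0$-monodromy has a pair of non-real conjugate roots. These polynomials cannot coincide, giving the desired contradiction and completing the proof.
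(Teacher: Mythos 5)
Your two-case strategy is essentially the paper's: separate $\mathbb{S}ol^4_1$ from the other two families by solvable length (using the corollary to Proposition \ref{prop:nil}), then compare the remaining cases via Corollary \ref{cor:semidirect} and the characteristic polynomial of the monodromy. Within that shared outline your case 1 is a legitimate and in fact more self-contained variant: you get the lower bound on derived length by exhibiting $[t,x],[t,y]$ projecting onto the finite-index sublattice $(C-I)\mathbb{Z}^2$ (valid since $1$ is not an eigenvalue of $C$ for $\mathbb{S}ol^4_1$, exactly the fact the paper quotes from Hillman's Theorem 8.7) and then producing a nontrivial power of $z$ as their commutator, whereas the paper deduces that $[\pi,\pi]$ has finite index in $\Gamma_q$ from $\beta_1=1$ and rules out abelianness via the classification of abelian $3$-manifold groups. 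Your deduction in case 2 that $\chi_A=\chi_B$ (or $\chi_{B^{-1}}$) in $\mathbb{Z}[x]$, from conjugacy over $\widehat{\mathbb{Z}}$ and injectivity of $\mathbb{Z}\hookrightarrow\widehat{\mathbb{Z}}$, is the same content as the paper's argument by trace congruences modulo every $q$.

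There is, however, one case your write-up omits. You treat $\mathbb{S}ol^4_{m,n}$ as a single geometry and only separate it from $\mathbb{S}ol^4_0$, but this symbol denotes an infinite family of pairwise non-equivalent maximal geometries (Section 2), and the proposition is invoked in the proof of Theorem \ref{thm:1} precisely to distinguish ``infinite types in $\mathbb{S}ol^4_{m,n}$'' from one another; so the statement also requires handling $M$ of geometry $\mathbb{S}ol^4_{m,n}$ versus $N$ of geometry $\mathbb{S}ol^4_{m',n'}$ with distinct parameters. The paper closes this by noting that equality of characteristic polynomials forces $M$ and $N$ to admit literally the same geometry, the parameters being recovered from the roots of $f_A$. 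Your argument yields this with one more sentence: since $\chi_A$ equals $\chi_B$ or $\chi_{B^{-1}}$, the eigenvalue triple of $B$ agrees with that of $A$ up to simultaneous inversion, so the exponent vectors $(a,b,c)$ agree up to the symmetry that swaps $(m,n)$ with $(n,m)$, which gives the same geometry; hence distinct parameters already contradict conjugacy in $GL(3,\widehat{\mathbb{Z}})$. Add that step and your proof is complete.
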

\begin{proof}
    As assumption, both $M$ and $N$ are 4-dimensional geometric manifolds with distinct geometries. Suppose there exists an isomorphism  $\widehat{\phi}:\widehat{\pi_1(M)}\to\widehat{\pi_1({N})}$. Firstly, assume that $M$  admits geometry $\mathbb{S}ol^4_0$ or $\mathbb{S}ol^4_{m,n}$, and $N$ is a $\mathbb{S}ol^4_1$-manifold. The nilradical $\sqrt{\pi_1(M)}$ is isomorphic to $\mathbb{Z}^3$ which means that  $\pi_1(M)\cong\mathbb{Z}^3\rtimes_A\mathbb{Z}$ where $A\in SL(3,\mathbb{Z})$. So $\pi_1(M)$ is metabelian with solvable length $2$. On the other hand, $N$ is the mapping torus of a $\mathbb{N}il^3$-manifold with  $\pi_1(N)\cong\Gamma_q\rtimes_{\varphi}\mathbb{Z}$ for some $q>1$, where $\varphi=(C,\mu)\in Aut(\Gamma_q)$. It's clear that $\pi_1(N)$ is solvable with composition series $1\triangleleft\mathbb{Z}^2\triangleleft\Gamma_q\triangleleft\pi_1(M)$, which means that its solvable length is at most 3. The commutator subgroup $G=[\pi_1(N),\pi_1(N)]\triangleleft\Gamma_q$, for $\pi_1(N)\slash\Gamma_q\cong\mathbb{Z}$ is abelian. By Theorem 8.7 of \cite{Hi:2002}, $C\in GL(2,\mathbb{Z})$ has infinite order with distinct eigenvalues which are not equal to $\pm 1$, hence $\beta_1(N)=1$, which means that $G$ is a finite index normal subgroup of $\Gamma_q$. Since $\Gamma_q$ is the fundamental group of some $\mathbb{N}il^3$-manifold $S$, $G\cong\pi_1(S')$ where $S'$ is some finite-sheeted cover of $S$. If $G$ is abelian, by the classification of abelian 3-manifold groups in \cite{ASW:2012}, $G$ could only be $\mathbb{Z}^3$, $\mathbb{Z}$, $\mathbb{Z}\slash n\mathbb{Z}$ or $\mathbb{Z}\times\mathbb{Z}\slash 2 \mathbb{Z}$, which means $S'$ admits geometry $\mathbb{E}^3$, $\mathbb{S}^2\times\mathbb{E}$, $\mathbb{S}^3$ or $\mathbb{S}^1\times\mathbb{S}^2$ respectively. The situation is contradict to the fact that $S$ admits geometry $\mathbb{N}il^3$, hence $\pi_1(N)$ is not metabelian, such that it has solvable length $3$. By the invariance of solvable length of discrete group under profinite completion, $\widehat{\pi_1(M)}$ and $\widehat{\pi_1(N)}$ have solvable length 2 and 3 respectively, which leads to the contradiction.

    Now we adjust the assumption such that neither $M$ nor $N$ admits geometry $\mathbb{S}ol^4_1$.
    Hence both nilradicals $\sqrt{\pi_1(M)}$ and $\sqrt{\pi_1(N)}$ are isomorphic to $\mathbb{Z}^3$. So we could assume that $\pi_1(M)$ and $\pi_1(N)$ are semi-direct products $\mathbb{Z}^3\rtimes_A\mathbb{Z}$ and $\mathbb{Z}^3\rtimes_B\mathbb{Z}$ respectively where $A,B\in SL(3,\mathbb{Z})$. Both $A$ and $B$ have at most one eigenvalue equal to 1. By the assumption, the isomorphism $\widehat{\phi}$ sends $\overline{\sqrt{\pi_1(M)}}$ to $\overline{\sqrt{\pi_1(N)}}$ naturally, since $\widehat{\phi}(\sqrt{\pi_1(M)})\cap\pi_1(N)\subset\sqrt{\pi_1(N)}$, and $\widehat{\phi}(\overline{\sqrt{\pi_1(M)}})\subset\overline{\widehat{\phi}(\sqrt{\pi_1(M)})}$. Then $\widehat{\pi_1({M})}\cong\widehat{\mathbb{Z}}^3\rtimes_{\widehat{A}} \widehat{\mathbb{Z}}$ is isomorphic to $\widehat{\mathbb{Z}}^3\rtimes_{\widehat{B}} \widehat{\mathbb{Z}}$ if and only if $A$ is conjugate to $B$ (or $B^{-1}$) in $GL(3,\widehat{\mathbb{Z}})$ by
    Corollary \ref{cor:semidirect}. We could simply suppose $A$ and $B$ are conjugate in $GL(3,\widehat{\mathbb{Z}})$. It is clear that $GL(3,\widehat{\mathbb{Z}})$ surjects onto $GL(3,\mathbb{Z}\slash q \mathbb{Z})$ for each integer $q>1$, which means that the images of $A$ and $B$ are conjugate in $GL(3,\mathbb{Z}\slash q \mathbb{Z})$. Then it is trivial that $tr(A)\equiv tr(B)\ (mod\ q)$ and $tr(A^{-1})\equiv tr(B^{-1})\ (mod\ q)$ for every $q>1$, such that $A$ and $B$ admit same characteristic polynomials $f_A(x)=f_B(x)=x^3-tr(A)x^2+tr(A^{-1})x-1$. However the geometries of $M$ and $N$ depend on the roots of $f_A(x)$ by the introduction in Section 2, which means that $M$ and $N$ are of same geometry. If  $f_A(x)$ has a couple of conjugate complex roots, then $M$ and $N$ admit geometry $\mathbb{S}ol^4_0$. When $f_A(x)$ has three distinct roots, $M$ and $N$ are admitting geometry $\mathbb{S}ol^4_{m,n}$ where $(m,n)=(tr(A), tr(A^{-1}))$ if all three roots are positive, or $(m,n)=(tr(A^2), tr(A^{-2}))$ if not.
\end{proof}

By Remark 3.1.A of Gromov \cite{Gromov} that if $G$ is a subgroup of some word hyperbolic group, then $G$ contains the free group $F_2$ or $G$ is a finite extension of some cyclic group. So the geometries with hyperbolic factor could be distinguished from the solvable geometries.

\begin{proposition}\label{prop:hyp&sol}
	Let $M$, $N$ be two closed orientable 4-dimensional geometric manifolds. Suppose $M$ admits one of geometries including $\mathbb{H}^{3}\times \mathbb{E}$, $ \widetilde{\mathbb{S}L_2}\times \mathbb{E}$, $ \mathbb{H}^{2}\times \mathbb{E}^{2}$, $\mathbb{H}^{2}\times \mathbb{H}^{2}$, $ \mathbb{H}^{4}$, $\mathbb{H}^{2}_{\mathbb{C}}$, and $\mathbb{H}^2\times\mathbb{S}^2$, and $N$ admits one of solvable Lie type geometries. Then $\widehat{\pi_1(M)}\ncong\widehat{\pi_1(N)}$.
\end{proposition}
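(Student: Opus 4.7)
The plan is to separate the two sides by virtual solvability. If $N$ admits one of the six solvable Lie type geometries $\mathbb{E}^{4}$, $\mathbb{N}il^{3}\times\mathbb{E}$, $\mathbb{N}il^{4}$, $\mathbb{S}ol_{m,n}^{4}$, $\mathbb{S}ol^{4}_{0}$, $\mathbb{S}ol^{4}_{1}$, then Theorem 8.1 of \cite{Hi:2002} forces $\pi_1(N)$ to be virtually polycyclic, so it contains a finite index normal solvable subgroup $K$ of some derived length $d$. Polycyclic groups are residually finite by Theorem \ref{polycyclic}, and the solvable-length corollary immediately following Proposition \ref{prop:nil} then gives that $\widehat{K}$ is profinite-solvable of derived length $d$; by Proposition \ref{profinite1-1}, $\widehat{K}$ is open in $\widehat{\pi_1(N)}$. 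Hence $\widehat{\pi_1(N)}$ has an open subgroup which is solvable as a profinite group.

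On the $M$ side I would verify that $\pi_1(M)$ contains a non-abelian free subgroup $F_2$ in each of the seven hyperbolic-type geometries. When $\mathbb{X}\in\{\mathbb{H}^4,\mathbb{H}^2_{\mathbb{C}}\}$, $\pi_1(M)$ is an infinite cocompact lattice in a rank-one simple Lie group and is therefore word hyperbolic; since $\pi_1(M)$ is not virtually cyclic, Gromov's Remark 3.1.A forces $F_2\leq\pi_1(M)$. In each of the five product cases $\mathbb{H}^3\times\mathbb{E}$, $\widetilde{\mathbb{S}L_2}\times\mathbb{E}$, $\mathbb{H}^2\times\mathbb{E}^2$, $\mathbb{H}^2\times\mathbb{H}^2$, and $\mathbb{H}^2\times\mathbb{S}^2$, some finite cover of $M$ has fundamental group of the form (an extension of) $\pi_1(F)\times A$ with $F$ a closed hyperbolic surface or $3$-manifold and $A$ an abelian or finite group; projecting to the hyperbolic factor and lifting back a pair of free generators of an $F_2\leq\pi_1(F)$ exhibits $F_2\leq\pi_1(M)$.

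Suppose toward contradiction that there is an isomorphism $\widehat{\phi}:\widehat{\pi_1(M)}\to\widehat{\pi_1(N)}$. Then $\widehat{\phi}^{-1}(\widehat{K})$ is an open subgroup of $\widehat{\pi_1(M)}$ which is solvable of derived length $d$, and by Proposition \ref{profinite1-1} it equals $\widehat{H}$ for a finite index subgroup $H\leq\pi_1(M)$. Since $H$ inherits residual finiteness from $\pi_1(M)$, the inclusion $H\hookrightarrow\widehat{H}$ identifies $H$ with a subgroup of a solvable profinite group, so the derived series of $H$ terminates and $H$ itself is solvable. However, $F_2\cap H$ has finite index in $F_2$, so it is a non-abelian free group of some rank $\geq 2$, contradicting the solvability of $H$. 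The main obstacle I expect is the case-by-case verification that $F_2\leq\pi_1(M)$: the two rank-one hyperbolic geometries are handled uniformly by Gromov, while the five product geometries require the virtual-product reduction to surface or hyperbolic $3$-manifold subgroups sketched above, which is routine but has to be checked one geometry at a time.
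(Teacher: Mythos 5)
Your overall strategy is the same as the paper's: separate the solvable Lie type side (virtually polycyclic, hence with an open solvable subgroup in the profinite completion of bounded derived length) from the hyperbolic-factor side by exhibiting a non-abelian free subgroup $F_2\leq\pi_1(M)$, and then use the fact that solvability of bounded derived length survives profinite completion. Your profinite bookkeeping is fine and in fact slightly more careful than the paper's one-line appeal to "profinite completion maintains solvability": pulling $\widehat{K}$ back to an open subgroup, intersecting with $\pi_1(M)$ to get a finite index subgroup $H$ with $\overline{H}\cong\widehat{H}$ solvable, and using residual finiteness of $H$ to embed it in $\widehat{H}$ is correct, and $F_2\cap H$ then gives the contradiction.

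The genuine gap is in your treatment of $\mathbb{H}^2\times\mathbb{H}^2$. You assert that some finite cover of $M$ has fundamental group which is (an extension of) $\pi_1(F)\times A$ with $F$ a hyperbolic surface or $3$-manifold, and you then lift free generators through the projection. This is only true for \emph{reducible} $\mathbb{H}^2\times\mathbb{H}^2$-manifolds. Irreducible ones exist: their fundamental groups are irreducible lattices in $\mathrm{Isom}(\mathbb{H}^2\times\mathbb{H}^2)$, which do not virtually split as products, and by the Margulis normal subgroup theorem they admit no quotient that is a surface group or $F_2$ (every infinite quotient has finite kernel), so there is no surjection onto a hyperbolic factor group through which to lift generators. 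The conclusion $F_2\leq\pi_1(M)$ is still true in that case — e.g.\ by the Tits alternative, since such a lattice is a finitely generated linear group which is not virtually solvable — so your argument can be repaired by replacing the virtual-product reduction with that appeal; but as written the step fails. For comparison, the paper sidesteps the free-subgroup question for irreducible $\mathbb{H}^2\times\mathbb{H}^2$-manifolds entirely and instead uses goodness of the fundamental groups to treat the Euler characteristic as a profinite invariant, contrasting $\chi(M)>0$ with $\chi(N)=0$ for solvable Lie type geometries; it reserves the Gromov/$F_2$ argument for the remaining geometries and the reducible $\mathbb{H}^2\times\mathbb{H}^2$ case.
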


\begin{proof}
	  If $M$ admits geometry $\mathbb{H}^{3}\times \mathbb{E}$, $ \widetilde{\mathbb{S}L_2}\times \mathbb{E}$, $ \mathbb{H}^{2}\times \mathbb{E}^{2}$, $ \mathbb{H}^{4}$, $\mathbb{H}^{2}_{\mathbb{C}}$, or $M$ is a reducible $\mathbb{H}^{2}\times \mathbb{H}^{2}$-manifold, we could know that $\pi_1(M)$ contains the free group $F_2$ by  Remark 3.1.A of Gromov \cite{Gromov}, so $\pi_1(M)$ is not solvable. On the other hand, $\pi_1(N)$ is a virtually polycyclic and solvable group with $h(\pi_1(N))=4$ which means that $\pi_1(N)$ is not a finite extension of cyclic group. Since profinite completion maintains the property of being solvable, it is easy to see that $\widehat{\pi_1(M)}\ncong\widehat{\pi_1({N})}$. 
	  
	  If $M$ is an irreducible $\mathbb{H}^2\times\mathbb{H}^2$-manifold,  $\pi_1(M)$ is an arithmetic subgroup of $Isom(\mathbb{H}^2\times\mathbb{H}^2)$ with no non-trivial normal subgroup of infinite index by Margulis Normal subgroup Theorem. Then $\beta_1 (\pi_1(M))=0$. On the other hand,   $\beta_1(\pi_1(N))\geq 1$ since $N$   admits some solvable Lie geometry. By the profinite invariance of $\beta_1$ of  discrete groups \cite{BCR:2015}, $\widehat{\pi_1(M)}\ncong\widehat{\pi_1({N})}$.
\end{proof}

\section{Profinite completions distinguishing geometries with hyperbolic factor}

In the last part of this paper, we prove that the profinite completion could distinguish geometries except for three cases eliminated in Theorem \ref{thm:1}.

\begin{proposition}
	Let $M$, $N$ be two closed orientable 4-dimensional manifolds. If $M$ admits geometry $\mathbb{H}^2\times\mathbb{H}^2$, and $N$ admits one of geometries $\mathbb{H}^2\times\mathbb{S}^2$, $\mathbb{H}^3\times\mathbb{E}$, $\mathbb{H}^2\times\mathbb{E}^2$ and $ \widetilde{\mathbb{S}L_2}\times \mathbb{E}$, then $\widehat{\pi_1(M)}\ncong\widehat{\pi_1(N)}$.
\end{proposition}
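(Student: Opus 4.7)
The plan is to assume $\widehat{\pi_1(M)} \cong \widehat{\pi_1(N)}$ and derive a contradiction using mod-$p$ cohomology via Serre's goodness. All the groups involved are good in the sense of Serre: in the reducible $\mathbb{H}^2\times\mathbb{H}^2$, $\mathbb{H}^2\times\mathbb{E}^2$, $\widetilde{\mathbb{S}L_2}\times\mathbb{E}$, and $\mathbb{H}^2\times\mathbb{S}^2$ cases, $\pi_1$ is virtually a direct product of Fuchsian groups and free abelian groups, hence good via the extension property of goodness; for $\mathbb{H}^3\times\mathbb{E}$ the hyperbolic 3-manifold factor is good by Wilton--Zalesskii \cite{Wil:2017}; and the irreducible $\mathbb{H}^2\times\mathbb{H}^2$-case is treated exactly as in the proof of Proposition \ref{prop:hyp&sol}. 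Consequently each mod-$p$ Betti number $\dim_{\mathbb{F}_p}H^i(\pi_1(-); \mathbb{F}_p)$ is a profinite invariant of the groups in question.

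First suppose $N$ has geometry $\mathbb{H}^2\times\mathbb{S}^2$. Then $N$ is finitely covered by $\Sigma_g\times S^2$ with $g\geq 2$, so $\pi_1(N)$ is virtually a closed surface group and has virtual cohomological dimension $2$; in particular $H^4(\pi_1(N); \mathbb{F}_p) = 0$ for every prime $p$. On the other hand, $M$ is a closed orientable aspherical 4-manifold, so Poincar\'{e} duality gives $H^4(\pi_1(M); \mathbb{F}_p) \cong H^4(M; \mathbb{F}_p) = \mathbb{F}_p \neq 0$. Goodness forces these groups to agree under the assumed isomorphism of profinite completions, a contradiction.

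Next suppose $N$ has one of the aspherical geometries $\mathbb{H}^3\times\mathbb{E}$, $\mathbb{H}^2\times\mathbb{E}^2$, or $\widetilde{\mathbb{S}L_2}\times\mathbb{E}$. Then both $\pi_1(M)$ and $\pi_1(N)$ admit closed aspherical 4-manifolds as $K(\pi,1)$, and for any prime $p$,
\[
\chi(M) = \sum_{i=0}^{4}(-1)^i\dim_{\mathbb{F}_p}H^i(\pi_1(M); \mathbb{F}_p),
\]
and similarly for $N$. In each of the three cases $N$ has a finite cover of the form (hyperbolic 3-manifold)$\times S^1$, $\Sigma_g\times T^2$, or (3-dimensional $\widetilde{\mathbb{S}L_2}$-manifold)$\times S^1$; the $\mathbb{E}$-factor forces $\chi(N) = 0$. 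By \cite[Section 9.5]{Hi:2002}, any closed $\mathbb{H}^2\times\mathbb{H}^2$-manifold satisfies $\chi(M) > 0$ (both in the reducible subcase, where a finite cover splits as $\Sigma_{g_1}\times\Sigma_{g_2}$ with $g_i\geq 2$, and in the irreducible arithmetic subcase). Goodness together with $\widehat{\pi_1(M)}\cong\widehat{\pi_1(N)}$ equates the right-hand side for $M$ with the analogous expression for $N$, yielding $\chi(M) = \chi(N)$, which contradicts $\chi(M) > 0 = \chi(N)$.

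The principal technical point is the verification of goodness in the irreducible $\mathbb{H}^2\times\mathbb{H}^2$-subcase, where $\pi_1(M)$ is not virtually a product of surface groups; we adopt the same convention as was already used in Proposition \ref{prop:hyp&sol}. Once goodness is granted the proof is a clean dichotomy: ``cohomological dimension $2$ versus $4$'' distinguishes $\mathbb{H}^2\times\mathbb{S}^2$ from $\mathbb{H}^2\times\mathbb{H}^2$, and ``positive versus zero Euler characteristic'' distinguishes the remaining three geometries, in direct parallel to the argument of Proposition \ref{prop:hyp&sol}.
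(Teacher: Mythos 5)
Your proposal is essentially correct in outline but takes a genuinely different route from the paper, and one step has a real gap. The paper organizes its proof by whether $M$ is reducible: for irreducible $M$ it invokes the Margulis Normal Subgroup Theorem \cite{Ma:1991} to conclude that every finite-index subgroup of $\pi_1(M)$ has $\beta_1=0$ and then appeals to the profinite invariance of $\beta_1$ from \cite{BCR:2015} (no goodness required), while for reducible $M$ it uses $L^2$-Betti numbers against $\mathbb{H}^2\times\mathbb{S}^2$ and the goodness/Euler-characteristic comparison against the three aspherical geometries. You instead organize by the geometry of $N$: an $H^4$/cohomological-dimension argument against $\mathbb{H}^2\times\mathbb{S}^2$ and the Euler-characteristic argument against the rest, applied uniformly to reducible and irreducible $M$. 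Your Euler-characteristic half is fine and in fact slightly cleaner than the paper's (the alternating sum of $\mathbb{F}_p$-Betti numbers of a closed aspherical manifold equals $\chi$ for every prime $p$, so no coprimality choice is needed), but it makes the goodness of $\pi_1(M)$ for \emph{irreducible} $\mathbb{H}^2\times\mathbb{H}^2$-manifolds load-bearing in this proposition. The paper only leans on that assertion in Proposition \ref{prop:hyp&sol}; its proof of the present proposition deliberately avoids it via Margulis. Since goodness of irreducible lattices in $\mathrm{Isom}(\mathbb{H}^2\times\mathbb{H}^2)$ is far less innocuous than in the reducible case (where $\pi_1(M)$ is virtually a product of surface groups and goodness follows from the extension property), you should either justify it or retain the paper's $\beta_1$/Margulis route for irreducible $M$; note also that the Margulis argument disposes of all four geometries of $N$ at once in that subcase.

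The genuine gap is the claim that $\mathrm{vcd}(\pi_1(N))=2$ implies $H^4(\pi_1(N);\mathbb{F}_p)=0$ for \emph{every} prime $p$. A closed $\mathbb{H}^2\times\mathbb{S}^2$-manifold can have torsion in its fundamental group: an involution acting on $\Sigma_g$ with fixed points, paired with the antipodal map on $S^2$, acts freely on the product, and then $\pi_1(N)$ is a cocompact Fuchsian group with $2$-torsion. For such a group $H^n(\pi_1(N);\mathbb{Z})$ contains $\mathbb{Z}/2$-summands in all even degrees $n\ge 4$, so $H^4(\pi_1(N);\mathbb{F}_2)\neq 0$; finite virtual cohomological dimension does not kill cohomology above the vcd in the presence of torsion. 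The statement as written is therefore false for $p=2$ in general. The repair is easy: pass to a finite-index surface subgroup $\pi_1(\Sigma_g)\le\pi_1(N)$, pull the corresponding open subgroup of $\widehat{\pi_1(N)}$ back through the assumed isomorphism to obtain a finite cover $M'$ of $M$ (still a closed orientable aspherical $4$-manifold) with $\widehat{\pi_1(M')}\cong\widehat{\pi_1(\Sigma_g)}$, and compare $H^4(\cdot;\mathbb{F}_p)$ exactly as you do; alternatively, keep $\pi_1(N)$ but choose $p$ odd and coprime to the orders of its torsion elements and invoke Farrell--Tate cohomology to get vanishing above the vcd. With that repair, and subject to the goodness caveat above, your argument goes through.
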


\begin{proof}
	Suppose there is an isomorphism $\widehat{\phi}:\widehat{{\pi_1(M)}}\to \widehat{{\pi_1(N)}}$. Firstly, we assume that $M$ is an irreducible $\mathbb{H}^2\times\mathbb{H}^2$-manifold, which means that it can not be covered by the product of two closed orientable surfaces by the definition in Chapter 14 of Hillman's \cite{Hi:2002}, then $\pi_1(M)$ is an arithmetic subgroup of $Isom(\mathbb{H}^2\times\mathbb{H}^2)$. By the Margulis Normal Subgroups Theorem \cite{Ma:1991}, $\pi_1(M)$ has no non-trivial normal subgroups of infinite index, which means $\beta_1(M)=0$, and $\beta_1(G)=0$ for any finite index normal subgroup $G\triangleleft \pi_1(M)$. Meanwhile $N$ is of $\mathbb{H}^3\times\mathbb{E}$, $\mathbb{H}^2\times\mathbb{E}^2$, $\mathbb{H}^2\times\mathbb{S}^2$ or $\widetilde{\mathbb{S}L_2}\times\mathbb{E}$ geometry, which means that there is a finite-sheeted cover $N'$ of $N$ such that $\beta_1(\pi_1(N'))\geq 1$. Hence there exists some finite-sheeted cover $M'$ of $M$ such that $\widehat{{\pi_1(M')}}\cong\widehat{{\pi_1(N')}}$. However, by the profinite invariance of $\beta_1$ in \cite{BCR:2015}, it is contradict to that $\beta_1(\pi_1(M'))=0$. Hence $\widehat{\pi_1(M)}\ncong\widehat{\pi_1(N)}$.

Now suppose $M$ is a reducible $\mathbb{H}^2\times\mathbb{H}^2$-manifold.  For a finitely generated residually finite group $\Omega$, its $L^2$-Betti number, by L\"{u}ck's Approximation Theorem in \cite{luck:1994}, is
$$b_1^{(2)}(\Omega)=\lim\limits_{d\to\infty}\frac{\beta_1(\Omega_d)}{[\Omega : \Omega_d]},$$ 
where $\Omega_d$ is the intersection of all normal subgroups of index at most $d$ in $\Omega$. If $\Omega$ is a surface group, then $b_1^{(2)}(\Omega)=-\chi(\Omega)$ by \cite[Propositon 3.5]{BCR:2015}. By assumption, $M$ could be finitely covered by the product of two surfaces, which means that there are two surface groups $\Omega$ and $\Lambda$ such that $\Omega\times\Lambda$ is a normal subgroup of $\pi_1(N)$ with finite index $m$. Then $$b_1^{(2)}(\pi_1(M))=\frac{1}{m}\lim\limits_{c,d\to\infty}\frac{\beta_1(\Omega_c\times\Lambda_d)}{[\Omega\times\Lambda : \Omega_c\times\Lambda_d]}=\frac{1}{m}\lim\limits_{c,d\to\infty}\frac{\beta_1(\Omega_c)+\beta_1(\Lambda_d)}{[\Omega:\Omega_c][\Lambda:\Lambda_d]}=0.$$ On the other hand, if $N$ admits geometry $\mathbb{H}^2\times\mathbb{S}^2$, its fundamental group $\pi_1(N)$ is virtually a surface group $\pi_1(\Sigma)$. Then  $b_1^{(2)}(\pi_1(N))=-\chi(\pi_1(\Sigma))\slash n$  where $n$ is the index of $\pi_1(\Sigma)$ in $\pi_1(N)$. Then $b_1^{(2)}(\pi_1(N))$ is not zero since $\Sigma$ admits $\mathbb{H}^2$-geometry.  By the invariance of $L^2$-Betti number under profinite completion \cite[Proposition 3.2]{BCR:2015}, $\widehat{\pi_1(M)}\ncong\widehat{\pi_1(N)}$.
 
For $M$ is still a reducible $\mathbb{H}^2\times\mathbb{H}^2$-manifold, we consider the finite $m$-sheeted cover $M'$ of $M$ such that $\pi_1(M')\cong\Omega\times\Lambda$. Suppose $N$ admits one of geometries among $\mathbb{H}^3\times\mathbb{E}$, $\mathbb{H}^2\times\mathbb{E}^2$ and $ \widetilde{\mathbb{S}L_2}\times \mathbb{E}$. Since $[\widehat{\pi_1(M)}:\widehat{\pi_1(M')}]=[\pi_1(M):\pi_1(M')]=m$, there is also an $m$-sheeted cover $N'$ of $N$ such that $\widehat{\pi_1(M')}\cong\widehat{\pi_1(N')}$. By the goodness of $\pi_1(M')$ and $\pi_1(N')$, 	
$$H^{\ast}(\pi_1(M'), \mathbb{Z}/q\mathbb{Z})\cong H^{\ast}(\widehat{\pi_1(M')}, \mathbb{Z}/q\mathbb{Z})\cong H^{\ast}(\widehat{\pi_1(N')}, \mathbb{Z}/q\mathbb{Z}) \cong H^{\ast}(\pi_1(N'), \mathbb{Z}/q\mathbb{Z}) $$ for any prime $p\in \mathbb{Z}$. For $M'$ and $N'$ are aspherical finite complexes, $\chi(\pi_1(M'))=\chi(M')=\sum_{i=0}^{4}(-1)^i\beta_i(M')$ which is the alternating sum of Betti numbers.
Choose $q$ to be coprime to all the order of torsion  elements in homology groups of $\pi_1(M')$ and $\pi_1(N')$. Then the euler characteristic number is invariant when passing to the $\mathbb{Z}\slash q\mathbb{Z}$-coefficient homology group.  By the summary in Chapter 9 of Hillman's \cite{Hi:2002}, $\chi(N')=0$ since $N'$ admits  $\mathbb{H}^3\times\mathbb{E}$, $\mathbb{H}^2\times\mathbb{E}^2$ or $\widetilde{\mathbb{S}L_2}\times\mathbb{E}$ geometry, and $\chi(M')\neq0$ since $M'$ is a reducible $\mathbb{H}^2\times\mathbb{H}^2$-manifold, which leads to the contradiction. Hence $\widehat{{\pi_1(M)}}\ncong\widehat{{\pi_1(N)}}$.
\end{proof}

\begin{proposition}\label{prop:hyp1}
	Let $M$, $N$ be two closed orientable 4-dimensional manifolds. If $M$ admits geometry $\mathbb{H}^2_{\mathbb{C}}$ or $\mathbb{H}^4$, and $N$ admits one of geometries $\mathbb{H}^2\times\mathbb{S}^2$, $\mathbb{H}^3\times\mathbb{E}$, $\mathbb{H}^2\times\mathbb{E}^2$ and $ \widetilde{\mathbb{S}L_2}\times \mathbb{E}$, then $\widehat{\pi_1(M)}\ncong\widehat{\pi_1(N)}$.
\end{proposition}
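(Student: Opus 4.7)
The plan is to assume an isomorphism $\widehat{\phi}:\widehat{\pi_1(M)}\to\widehat{\pi_1(N)}$ and derive a contradiction by exhibiting a profinite invariant that separates the two sides, splitting the argument according to whether the geometry of $N$ is $\mathbb{H}^2\times\mathbb{S}^2$ or one of the three aspherical product geometries, in parallel with the preceding proposition.

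First I would treat the case $N$ admits $\mathbb{H}^2\times\mathbb{S}^2$ via the first $L^2$-Betti number, exactly as in the reducible $\mathbb{H}^2\times\mathbb{H}^2$ portion of the preceding proof. Since $\pi_1(N)$ is virtually a surface group $\pi_1(\Sigma)$ of genus at least two, L\"uck approximation together with \cite[Proposition 3.5]{BCR:2015} gives $b_1^{(2)}(\pi_1(N))=-\chi(\Sigma)/[\pi_1(N):\pi_1(\Sigma)]>0$. For $M$ a closed $\mathbb{H}^4$-manifold, Dodziuk's vanishing theorem forces $b_i^{(2)}(\pi_1(M))=0$ outside the middle degree $i=2$; for $M$ a closed $\mathbb{H}^2_{\mathbb{C}}$-manifold, the same conclusion follows from Gromov's theory of K\"ahler hyperbolic manifolds. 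In particular $b_1^{(2)}(\pi_1(M))=0$, and the profinite invariance of $b_1^{(2)}$ \cite[Proposition 3.2]{BCR:2015} contradicts $\widehat{\pi_1(M)}\cong\widehat{\pi_1(N)}$.

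Second, if $N$ admits one of $\mathbb{H}^3\times\mathbb{E}$, $\mathbb{H}^2\times\mathbb{E}^2$ or $\widetilde{\mathbb{S}L_2}\times\mathbb{E}$, then both $M$ and $N$ are aspherical finite complexes and I would compare Euler characteristics. From the summary in Chapter $9$ of \cite{Hi:2002}, $\chi(N)=0$, whereas $\chi(M)>0$ by Chern--Gauss--Bonnet in the real hyperbolic case and by the Chern--Weil formula for the Bergman metric in the complex hyperbolic case. Following the goodness-plus-mod-$q$-cohomology argument employed in the preceding proposition, I would pick a prime $q$ coprime to all torsion in the integral homology of $\pi_1(M)$ and $\pi_1(N)$, so that $\chi(\pi_1(M))$ and $\chi(\pi_1(N))$ are the alternating sums of mod-$q$ Betti numbers and are therefore profinite invariants, yielding $\chi(\pi_1(M))=\chi(\pi_1(N))$ and the contradiction $\chi(M)>0=\chi(N)$. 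The group $\pi_1(N)$ is good because it is an extension of a surface group or a closed $\mathbb{H}^3$-manifold group, each good by \cite{GJZZ:2008} and its $3$-manifold analogue, by $\mathbb{Z}$.

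The main obstacle is the goodness of $\pi_1(M)$ when $M$ is a closed real or complex hyperbolic $4$-manifold, since in full generality this is not yet established for cocompact lattices in $\mathrm{Isom}(\mathbb{H}^4)$ or $\mathrm{Isom}(\mathbb{H}^2_{\mathbb{C}})$. An alternative route that avoids the issue is to use the middle-degree $L^2$-Betti number: by Atiyah's $L^2$-index theorem one has $b_2^{(2)}(\pi_1(M))=\chi(M)>0$, while the central $\widehat{\mathbb{Z}}$ coming from the $\mathbb{E}$-direction of $N$ forces $b_2^{(2)}(\pi_1(N))=0$ through the K\"unneth formula for $L^2$-Betti numbers; combined with profinite invariance of $L^2$-Betti numbers in the appropriate generality, this bypasses the goodness question at the price of invoking a deeper profinite rigidity result for higher $L^2$-invariants.
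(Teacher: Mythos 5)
Your first case is sound: for $N$ of geometry $\mathbb{H}^2\times\mathbb{S}^2$, comparing first $L^2$-Betti numbers (positive for a virtual surface group; zero for a closed $\mathbb{H}^4$- or $\mathbb{H}^2_{\mathbb{C}}$-manifold, where $L^2$-cohomology is concentrated in middle degree) and invoking the profinite invariance of $b_1^{(2)}$ \cite[Proposition 3.2]{BCR:2015} is a legitimate alternative to the paper, which instead quotes the profinite rigidity of Fuchsian groups among lattices \cite[Theorem 1.1]{BCR:2015}. The genuine gap is in your second case, i.e.\ when $N$ has geometry $\mathbb{H}^3\times\mathbb{E}$, $\mathbb{H}^2\times\mathbb{E}^2$ or $\widetilde{\mathbb{S}L_2}\times\mathbb{E}$. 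The Euler-characteristic comparison requires $H^{\ast}(\pi_1(M);\mathbb{Z}/q\mathbb{Z})\cong H^{\ast}(\widehat{\pi_1(M)};\mathbb{Z}/q\mathbb{Z})$, i.e.\ goodness of $\pi_1(M)$, and, as you yourself note, this is not established for cocompact lattices in $\mathrm{Isom}(\mathbb{H}^4)$ or in $PU(2,1)$; so the main route is incomplete. Your fallback does not close the gap: profinite invariance of $L^2$-Betti numbers is only available in degree one --- that is precisely what \cite[Proposition 3.2]{BCR:2015} provides, because first Betti numbers of corresponding finite-index subgroups can be matched without any goodness hypothesis --- whereas in degree two the ordinary Betti numbers of corresponding open subgroups need not agree unless the groups are good, so the invariance of $b_2^{(2)}$ you invoke is an unproven statement of essentially the same strength as the goodness you are trying to avoid. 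Hence the proposal, as written, does not prove the proposition in this case.

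The paper's own argument for exactly this case sidesteps 4-dimensional goodness and uses only properties of $N$ together with 3-manifold (or Fuchsian) results. For $\mathbb{H}^3\times\mathbb{E}$ it passes to a finite-index normal subgroup $\pi_1(S)\times\mathbb{Z}\le\pi_1(N)$ with $S$ a closed hyperbolic 3-manifold, and to the corresponding finite cover $M'$ of $M$, giving $\widehat{\pi_1(M')}\cong\widehat{\pi_1(S)}\times\widehat{\mathbb{Z}}$. Since $\pi_1(M')$ is a torsion-free cocompact lattice in a rank-one group, it has trivial centre, so the composite $\pi_1(M')\to\widehat{\pi_1(S)}\times\widehat{\mathbb{Z}}\to\widehat{\pi_1(S)}$ is injective (an element killed by the projection would be central); the paper then argues that $\widehat{\pi_1(S)}$ would contain $\widehat{\mathbb{Z}}^2$, contradicting Theorem A of \cite{Wil:2017}, and treats $\mathbb{H}^2\times\mathbb{E}^2$ and $\widetilde{\mathbb{S}L_2}\times\mathbb{E}$ similarly. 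If you wish to keep your Euler-characteristic idea you would first have to prove goodness of closed $\mathbb{H}^4$- and $\mathbb{H}^2_{\mathbb{C}}$-lattices, a substantial open problem; replacing that step by the centre-plus-projection argument is the available fix.
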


\begin{proof}
	
Suppose there is an isomorphism $\widehat{\phi}:\widehat{{\pi_1(M)}}\to \widehat{{\pi_1(N)}}$, where $M$ is as assumption and $N$ admits $\mathbb{H}^3\times\mathbb{E}$-geometry. Then $\pi_1(N)$ has a finite index normal subgroup $\pi_1(S)\times\mathbb{Z}$ where $S$ is a closed 3-dimensional $\mathbb{H}^3$-manifold. The profinite completion $\widehat{\pi_1(S)\times\mathbb{Z}}$ is naturally $\widehat{\pi_1(S)}\times\widehat{\mathbb{Z}}$ by the goodness of $\pi_1(S)$ and $\mathbb{Z}$. Accordingly, there is a finite-sheeted cover $M'$ of $M$ such that $\widehat{\phi}$ induces the isomorphism $\widehat{\phi}:\widehat{{\pi_1(M')}}\to\widehat{\pi_1(S)}\times\widehat{ \mathbb{Z}}$. Consider the homomorphisms 
$$\widehat{{\pi_1(M')}}\xrightarrow{\widehat{\phi}}\widehat{\pi_1(S)}\times\widehat{\mathbb{Z}}\xrightarrow{p}\widehat{\pi_1(S)}$$
where $p$ is the canonical projection.

\noindent\textbf{Claim:} $p\circ\widehat{\phi}:\pi_1(M')\to\widehat{\pi_1(S)}$ is injective.

\noindent\textit{Proof of Claim:} Suppose there exists non-trivial element $x\in\pi_1(M')$ such that $p\circ\widehat{\phi}(x)=1\in\widehat{\pi_1(S)}$. Then $\widehat{\phi}(x)=(1,x'')\in\widehat{\pi_1(S)}\times\widehat{\mathbb{Z}}$ for some non-trivial $x''\in\mathbb{Z}$. For any element $y\in\pi_1(M)$ such that $\widehat{\phi}(y)=(y',y'')\in\widehat{S}\times\widehat{\mathbb{Z}}$, we have $\widehat{\phi}(xy)=\widehat{\phi}(yx)$. Hence $xy=yx\in\pi_1(M')$, which means that $x$ is in the centre of $\pi_1(M')$. However, $\pi_1(M')$ has no centre, which finishes the proof of Claim.

Now $\pi_1(M')<\widehat{\pi_1(S)}$, then the profinite completion $\widehat{\pi_1(M')}\leq\widehat{\pi_1(S)}$, which means that  $\widehat{\pi_1(S)}$ contains $\widehat{\mathbb{Z}}^2$ as a subgroup. However, it is contradict to the $\mathbb{H}^3$-geometry of $S$ by Theorem A of \cite{Wil:2017}. Hence $\widehat{\pi_1({M})}\ncong\widehat{\pi_1(N)}$.

The case  for $N$ admitting geometry $\mathbb{H}^2\times\mathbb{E}^2$ or $\widetilde{\mathbb{S}L}\times\mathbb{E}$ is similar. As for $\mathbb{H}^2\times\mathbb{S}^2$-geometry, $\pi_1(N)$ is virtually a surface group $\pi_1(\Sigma_g)$ for some $g>1$. By Theorem 1.1 of \cite{BCR:2015} that $\pi_1(N)$ is profinitely rigid among all the lattices of connected Lie group, we could know that $\widehat{\pi_1(M)}\ncong\widehat{\pi_1(N)}$.	
\end{proof}

Finally we distinguish geometries $\mathbb{H}^2\times\mathbb{S}^2$, $\mathbb{H}^3\times\mathbb{E}$,  $\mathbb{H}^2\times\mathbb{E}^2$ and $\widetilde{\mathbb{S}L_2}\times\mathbb{E}$ from each other. The last two are already dealt with in  Proposition \ref{thm:geo} for they could both be realized as Seifert fibred spaces.
\begin{proposition}
	Let $M$, $N$ be two closed orientable 4-dimensional manifolds admitting distinct geometries among $\mathbb{H}^2\times\mathbb{S}^2$, $\mathbb{H}^3\times\mathbb{E}$,  $\mathbb{H}^2\times\mathbb{E}^2$ and $\widetilde{\mathbb{S}L_2}\times\mathbb{E}$, then $\widehat{\pi_1(M)}\ncong\widehat{\pi_1(N)}$.
\end{proposition}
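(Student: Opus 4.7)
The plan is to dispose of the $\binom{4}{2}=6$ pairs in three stages. The pair $\{\mathbb{H}^2\times\mathbb{E}^2,\widetilde{\mathbb{S}L_2}\times\mathbb{E}\}$ requires no new work: both geometries are Seifert fibred over hyperbolic $2$-orbifolds, and Theorem \ref{thm:geo} together with Proposition \ref{prop:seifert} separate them via the vanishing or non-vanishing of the (profinite) rational Euler class. For each of the three pairs involving $\mathbb{H}^2\times\mathbb{S}^2$, I would note that an orientable $\mathbb{H}^2\times\mathbb{S}^2$-manifold $M$ has $\pi_1(M)$ virtually isomorphic to a cocompact Fuchsian surface group $\pi_1(\Sigma_g)$ with $g\geq 2$. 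Assuming $\widehat{\pi_1(M)}\cong\widehat{\pi_1(N)}$ for contradiction, Proposition \ref{profinite1-1} identifies $\widehat{\pi_1(\Sigma_g)}$ with $\widehat{\Lambda}$ for some corresponding finite index $\Lambda\leq\pi_1(N)$. The profinite rigidity of Fuchsian groups among lattices in connected Lie groups \cite{BCR:2015} then forces $\Lambda\cong\pi_1(\Sigma_g)$; but $\Lambda$ is the fundamental group of a closed aspherical $4$-manifold, so $\mathrm{cd}(\Lambda)=4>2=\mathrm{cd}(\pi_1(\Sigma_g))$, a contradiction.

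The remaining two pairs separate $\mathbb{H}^3\times\mathbb{E}$ from each of the Seifert-over-hyperbolic geometries, and I would detect this via the presence of a closed $\widehat{\mathbb{Z}}^3$-subgroup, in the spirit of Corollary \ref{Cor:3}(4). On the Seifert side, after passing to a suitable finite cover $M'$ one produces $\mathbb{Z}^3\leq\pi_1(M')$: for $\mathbb{H}^2\times\mathbb{E}^2$ take any infinite-order element of the surface factor together with the central $\mathbb{Z}^2$, and for $\widetilde{\mathbb{S}L_2}\times\mathbb{E}$ combine the central $\sqrt{\pi_1(M')}\cong\mathbb{Z}^2$ with a lift of any infinite-order element of $\pi_1^{orb}(B)$. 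Scott's LERF for surface groups makes this $\mathbb{Z}^3$ separable in $\pi_1(M')$, so its closure in $\widehat{\pi_1(M')}$ is genuinely $\widehat{\mathbb{Z}}^3$. On the $\mathbb{H}^3\times\mathbb{E}$ side, a finite cover gives $\pi_1(N')\cong\pi_1(S)\times\mathbb{Z}$ with $S$ closed hyperbolic and, by goodness, $\widehat{\pi_1(N')}\cong\widehat{\pi_1(S)}\times\widehat{\mathbb{Z}}$.

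Suppose by contradiction that a closed $K\cong\widehat{\mathbb{Z}}^3$ embeds in $\widehat{\pi_1(S)}\times\widehat{\mathbb{Z}}$. The kernel $L:=K\cap(\{1\}\times\widehat{\mathbb{Z}})$ of the restricted projection $K\to\widehat{\pi_1(S)}$ is a closed subgroup of $\widehat{\mathbb{Z}}=\prod_p\mathbb{Z}_p$, hence $L=\prod_p L_p$ with each $L_p$ either $0$ or a closed $\mathbb{Z}_p$-submodule of $\mathbb{Z}_p$ isomorphic to $\mathbb{Z}_p$. Working prime by prime, the structure theorem over the PID $\mathbb{Z}_p$ gives $(K/L)_p=\mathbb{Z}_p^3/L_p\supseteq\mathbb{Z}_p^2$ in every case, so $K/L\supseteq\prod_p\mathbb{Z}_p^2=\widehat{\mathbb{Z}}^2$. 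Since $K/L$ injects into $\widehat{\pi_1(S)}$ via the projection, we obtain $\widehat{\mathbb{Z}}^2\leq\widehat{\pi_1(S)}$, contradicting Theorem A of Wilton--Zalesskii \cite{Wil:2017}. Hence $\widehat{\pi_1(N)}\not\cong\widehat{\pi_1(M)}$, which completes the proof.

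The main technical obstacle is the final rank-counting step: one needs the product decomposition $\widehat{\mathbb{Z}}=\prod_p\mathbb{Z}_p$ to pass to closed subgroups (which holds because $\widehat{\mathbb{Z}}$ is profinite abelian), the structure theorem for $\mathbb{Z}_p$-submodules of $\mathbb{Z}_p^3$ (ensuring that every $(K/L)_p$ has $\mathbb{Z}_p$-rank at least $2$), and the reassembly $\prod_p\mathbb{Z}_p^2=\widehat{\mathbb{Z}}^2$ as a genuine closed subgroup of $\widehat{\pi_1(S)}$ rather than only a primewise statement. Once these standard features of profinite abelian groups are in place, Wilton--Zalesskii's prohibition of $\widehat{\mathbb{Z}}^2$ inside $\widehat{\pi_1(S)}$ together with Scott's subgroup separability for surface groups delivers the clean dichotomy that completes the proposition.
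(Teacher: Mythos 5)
Your overall strategy is sound and in places genuinely different from the paper: for the pairs involving $\mathbb{H}^2\times\mathbb{S}^2$ the paper argues via centres (surface groups have none, while the other three geometries have finite-index subgroups with non-trivial centre, as in its Proposition on $\mathbb{H}^4$/$\mathbb{H}^2_{\mathbb{C}}$), whereas you invoke Bridson--Conder--Reid plus a cohomological-dimension count; and for $\mathbb{H}^3\times\mathbb{E}$ versus the Seifert geometries the paper tracks the closure of the Seifert fibre, a $\widehat{\mathbb{Z}}^2$, through the projection to $\widehat{\pi_1(S)}$ and rules out the possible images using both Theorem A and Theorem B of Wilton--Zalesskii, whereas your $\widehat{\mathbb{Z}}^3$ argument needs only Theorem A (it is in the spirit of the paper's proof of Corollary \ref{Cor:3}(4)). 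Your final rank computation ($L=K\cap(\{1\}\times\widehat{\mathbb{Z}})$ procyclic, $K/L\supseteq\prod_p\mathbb{Z}_p^2=\widehat{\mathbb{Z}}^2$ embedding in $\widehat{\pi_1(S)}$) is correct. Two small points should be tightened: the BCR theorem is stated for lattices in \emph{connected} Lie groups, while $\Lambda$ sits in a possibly disconnected isometry group, so pass first to a further finite-index subgroup in the identity component (the corresponding open subgroup of $\widehat{\pi_1(\Sigma_g)}$ is again the completion of a surface group, and the $\mathrm{cd}$ contradiction survives); and your covers $M'$ and $N'$ are chosen independently, so you must intersect your $\widehat{\mathbb{Z}}^3$ with the open subgroup $\widehat{\pi_1(S)}\times\widehat{\mathbb{Z}}$ --- harmless, since every open subgroup of $\widehat{\mathbb{Z}}^3$ is again $\widehat{\mathbb{Z}}^3$.

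The one genuine gap is the sentence ``Scott's LERF for surface groups makes this $\mathbb{Z}^3$ separable in $\pi_1(M')$, so its closure in $\widehat{\pi_1(M')}$ is genuinely $\widehat{\mathbb{Z}}^3$.'' Separability of a subgroup $H\leq G$ only gives $\overline{H}\cap G=H$; it does \emph{not} imply $\overline{H}\cong\widehat{H}$, which requires that the profinite topology of $G$ induce the \emph{full} profinite topology on $H$ (otherwise the closure of a cyclic subgroup could a priori be a proper quotient of $\widehat{\mathbb{Z}}$ such as $\prod_{p\in\pi}\mathbb{Z}_p$ --- exactly the degenerate cases the paper has to exclude). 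Moreover LERF of the surface group does not formally transfer to $\pi_1(M')$, which is a central extension of a surface group by $\mathbb{Z}^2$. The claim you need is true, but it requires an argument: for instance, choose the infinite-order element to be (a lift of) a standard generator $a_1$ of the base surface group, so that $\langle a_1\rangle$ is a retract of $\pi_1(\Sigma)$ via the homomorphism to $H_1$ killing the other generators; this forces $\overline{\langle a_1\rangle}\cong\widehat{\mathbb{Z}}$, while the closure of the fibre is $\widehat{\mathbb{Z}}^2$ (goodness of the surface group, as in the paper's Claim 1), and projectivity of $\widehat{\mathbb{Z}}$ then splits the closure of your $\mathbb{Z}^3$ as $\widehat{\mathbb{Z}}^2\times\widehat{\mathbb{Z}}\cong\widehat{\mathbb{Z}}^3$. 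With this step repaired, your proof goes through.
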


\begin{proof}
Firstly, we suppose that there is an isomorphism between two profinite completions $\widehat{\phi}:\widehat{\pi_1(M)}\to \widehat{\pi_1(N)}$, and assume $M$ is a $\mathbb{H}^2\times\mathbb{S}^2$-manifold while $N$ admits one of last three geometries. It is clear that $\pi_1(M)$ is virtually a surface group $\pi_1(\Sigma_g)$ with genus $g\geq2$, and $\pi_1(\Sigma_g)$ doesn't contain non-trivial centre. On the other hand, there exists a finite index normal subgroup $\Lambda\triangleleft\pi_1(N)$ such that $\Lambda$ has non-trivial centre since its geometry contains Euclidean factor. Then it is contract to the assumption of isomorphism which is similar to Proposition \ref{prop:hyp1}, hence $\widehat{\pi_1(M)}\ncong\widehat{\pi_1(N)}$.

Now assume $M$ is of geometry $\mathbb{H}^3\times\mathbb{E}$, and $N$ is of geometry $\mathbb{H}^2\times\mathbb{E}^2$ or $\widetilde{\mathbb{S}L_2}\times\mathbb{E}$.
Then there is a finite-sheeted cover $M'$ of $M$ such that $\pi_1(M')\cong\pi_1(S)\times\mathbb{Z}$ and $S$ is a $\mathbb{H}^3$-manifold. Accordingly, there is a finite-sheeted cover $N'$ of $N$ such that $\widehat{\pi_1(M')}\cong\widehat{\pi_1(N')}$ by the isomorphism of $\widehat{\phi}$. The nilradical  $\sqrt{\pi_1(N')}\cong\mathbb{Z}^2$ since $N'$ is a Seifert fibred 4-manifold, which means that $\widehat{\phi}^{-1}(\overline{\sqrt{\pi_1(N')}})\cong\widehat{ \mathbb{Z}}^2$. Like Proposition \ref{prop:hyp1}, we use $p:\widehat{\pi_1(S)}\times\widehat{\mathbb{Z}}\to\widehat{\pi_1(S)}$ to denote the canonical projection. Then $p\circ\widehat{\phi}^{-1}(\overline{\sqrt{\pi_1(N')}})$ could only be isomorphic to $\widehat{\mathbb{Z}}^2$, $\prod_{p\in\pi}\mathbb{Z}_p$  or $\widehat{\mathbb{Z}}\times\prod_{p\in\pi}\mathbb{Z}_p$, where $\pi$ is some non-trivial collection of prime numbers and $\mathbb{Z}_p$ is $p$-adic integer since $\widehat{\mathbb{Z}}\cong\prod_{p\ prime}\mathbb{Z}_p$. However, these three situations are all impossible, which means that $\widehat{\pi_1(M')}\ncong\widehat{\pi_1(N')}$. If $p\circ\widehat{\phi}^{-1}(\overline{\sqrt{\pi_1(N')}})\cong\widehat{\mathbb{Z}}^2$, then $\widehat{\pi_1(S)}$ contains $\widehat{\mathbb{Z}}^2$ as a subgroup, which means that $S$ is not hyperbolic by Theorem A of \cite{Wil:2017}. If $p\circ\widehat{\phi}^{-1}(\overline{\sqrt{\pi_1(N')}})\cong\widehat{\mathbb{Z}}\times\prod_{p\in\pi}\mathbb{Z}_p$, then $\widehat{\pi_1(S)}$ contains $\widehat{\mathbb{Z}}$ as a procyclic normal subgroup, which means that $S$ is Seifert fibred by Theorem B of \cite{Wil:2017}. If $p\circ\widehat{\phi}^{-1}(\overline{\sqrt{\pi_1(N')}})\cong\prod_{p\in\pi}\mathbb{Z}_p$,  then $\widehat{\phi}^{-1}$ induces the injection $\widehat{ \mathbb{Z}}^2\to\prod_{p\in\pi}\mathbb{Z}_p\times\widehat{\mathbb{Z}}$, which is not possible. Hence $\widehat{\pi_1(M)}\ncong\widehat{\pi_1(N)}$. 
\end{proof}

\begin{proof}[\textbf{Proof of Theorem \ref{thm:1}}]
	As assumption, both $M$ and $N$ are closed orientable 4-dimensional manifolds with infinite fundamental groups and distinct geometries. By Proposition \ref{prop:hyp&sol}, we could know that $M$ admits solvable Lie geometries if and only if $N$ admits solvable Lie geometries. And all the solvable geometries including infinite types in $\mathbb{S}ol^4_{m,n}$ are all distinguished from each other by Proposition \ref{prop:seifert} and Proposition \ref{prop:sol}. For the geometries with hyperbolic factor, they can be distinguished by their profinite completions by the discussion in Section 8 except for the pairs listed in the theorem.
\end{proof}

\begin{proof}[\textbf{Proof of Corollary \ref{Cor:3}}]
	 It is easy to describe the geometry of a closed orientable 4-manifold $M$ by the profinite completion of its fundamental group $\widehat{\pi_1(M)}$ when $M$ is  solvable, Seifert fibred, or has a spherical factor. The only part left is when $M$ admits geometry $\mathbb{H}^3\times\mathbb{E}$. Then there is a finite index normal subgroup $\pi_1(S)\times\mathbb{Z}$ of $\pi_1(M)$ such that $S$ is a closed orientable $\mathbb{H}^3$-manifold. Then $\widehat{\pi_1(S)}\times\widehat{\mathbb{Z}}$ is a finite index normal subgroup of $\widehat{\pi_(M)}$. By the similar method in Proposition \ref{prop:hyp1}, the subgroup $\widehat{\mathbb{Z}}^3$  of $\widehat{\pi_1(M)}$  could project to some subgroup $G$ of $\widehat{\pi_1(S)}$, and $G$ contains $\widehat{\mathbb{Z}}^2$ by analysis. However, it is contradict to Theorem A of \cite{Wil:2017}, which finishes the proof of Corollary 3.
\end{proof}

\bibliographystyle{amsplain}

\end{document}